\newtheorem{theorem}{Theorem}[section]
\newtheorem{lemma}[theorem]{Lemma}
\newtheorem{corollary}[theorem]{Corollary}
\newtheorem{proposition}[theorem]{Proposition}
\newtheorem{problem}[theorem]{Problem}
\theoremstyle{definition}
\theoremstyle{remark}
\newtheorem{remark}[theorem]{Remark}
\newenvironment{theoremno}[2]{\begin{trivlist}
\item[\hskip \labelsep {\bfseries #1}\hskip \labelsep {\bfseries #2}]}{\end{trivlist}}
\numberwithin{equation}{section}
\newcommand{\sai} {\mbox{$\to \kern -0.50 em \to$}}
\newcommand{\nsai} {\mbox{$\not\to \kern -0.50 em \to$}}
\newcommand{\st} {\stackrel}
\newcommand{\hr} {\hookrightarrow}
\begin{document}

\title[Geometry of the Banach spaces $C(\beta {\mathbb N} \times   K,
X)$ \today] {Geometry of the Banach spaces  $C(\beta {\mathbb N} \times K, X)$ for   compact metric spaces $K$}

%  Information for first author
\author{Dale E. Alspach
}
%   Address of record for the research reported here
\address{Department of Mathematics, Oklahoma State University, Stillwater OK 74078, USA}
%    Current address
%\curraddr{Department of Mathematics and Statistics,}
\email{alspach@math.okstate.edu}
%    \thanks will become a 1st page footnote.
%\thanks{xxx}

%  Information for second author
\author{El\'oi Medina Galego}
\address{Department of Mathematics, University of S\~ao Paulo, S\~ao Paulo,
Brazil 05508-090          }
\email{eloi@ime.usp.br}
%\thanks{Support information for the second author.}

%    General info
\subjclass[2010]{Primary 46B03; Secondary 46B25}

%\date{January 1, 2001 and, in revised form, June 22, 2001.}

%\dedicatory{This paper is dedicated to our advisors.}

\keywords{Isomorphic classifications of $C(K, X)$ spaces, Stone-Cech compactification, compact
metric spaces, Isomorphic classifications of spaces of compact operators}

\begin{abstract} { A classical result of Cembranos and Freniche states
that the $C(K, X)$ spaces  contains a complemented copy of $c_{0}$ whenever
$K$ is an infinite compact Hausdorff space and $X$ is an infinite dimensional
Banach space. This paper takes this result as a starting point and
begins a study of the conditions under which the spaces
$C(\alpha)$, $\alpha<\omega_1$ are quotients of or complemented in spaces $C(K,X)$.

In contrast to the $c_0$ result,
we prove that if $C(\beta \mathbb N \times [1,\omega], X)$ contains
a complemented copy of  $C(\omega^\omega)$ then $X$ contains a copy
of $c_{0}$. Moreover,  we show that  $C(\omega^\omega)$ is not even
a quotient of $C(\beta {\mathbb N} \times [1,\omega], l_{p})$, $1<p<
\infty$.

We then completely determine the separable $C(K)$ spaces which are
isomorphic to a complemented subspace or a  quotient of the $C(\beta
{\mathbb N} \times [1,\alpha], l_{p})$ spaces for countable
ordinals $\alpha$ and  $1 \leq p< \infty$. As a consequence, we
obtain the isomorphic classification of the $C(\beta {\mathbb N}
\times K, l_{p})$ spaces for infinite  compact metric spaces $K$ and  $1
\leq p  < \infty$. Indeed, we establish  the following more general
cancellation law. Suppose  that the Banach space  $X$ contains
no copy of $c_{0}$ and $K_{1}$ and $K_{2}$ are infinite compact
metric spaces,  then the following statements are equivalent:
\begin{enumerate} \item [(1)] $C(\beta \mathbb N \times K_{1}, X)$ is
isomorphic to   $C(\beta \mathbb N \times K_{2}, X)$ \end{enumerate}
\begin{enumerate} \item [(2)] $C(K_{1})$ is isomorphic to  $C(K_{2}).$
\end{enumerate} These results are applied to the isomorphic classification of
some spaces of compact operators.
} \end{abstract}

\maketitle

%\section*{Introduction}

%% The correct journal style for \specialsection is all uppercase; a known
%% in amsart.cls prevents this, so input must be uppercase until it is
%\specialsection*{This is a Special Section Head}
%\specialsection*{THIS IS A SPECIAL SECTION HEAD}

%%%%%%%%%%%%%%%%%%%%%%%%%%%%%%%%%%%%%%%%%%%%%%%%%%%%%%%%%%%%%%%%%%%%%%%%
%\footnote{Here is an example of a footnote. Notice that this footnote}

\par
%%%%%%%%%%%%%%%%%%%%%%%%%%%%%%%%%%%%%%%%%%%%%%%%%%%%%%%%%%%%%%%%%%%%%%%%

\section{Introduction}
The isomorphic classification of the separable spaces of continuous functions
on a compact Hausdorff space was completed in 1966 when Milutin, \cite {M}, \cite{R}, showed that
there was a single isomorphism class for the continuous functions on
uncountable compact metric spaces. For general compact Hausdorff spaces some
work has been done in special cases, e.g., \cite{GulO} or \cite{K}, but, unlike the isometric case
which is completely determined by 
the Banach-Stone theorem,  extended in \cite{Am} and \cite {Cam}, the
isomorphic classification seems hopeless.

In this paper we consider a special class of compact Hausdorff spaces but allow the range space to be a
Banach space instead of $\mathbb R.$ Thus we study the spaces $C(K,X)$ of continuous functions from
$K$ into $X$
where $X$ is a Banach space,
$K$ is a compact Hausdorff space and the norm of an element $f$ is
$\|f\|=\sup_{k\in K} \|f(k)\|_X.$ Usually $K$ will be a compact metric space
or a product of a compact metric space and the Stone-Cech compactification of
the natural numbers, $\beta \mathbb N.$ Our interest in $\beta \mathbb N$
stems from application of some of the results to the following question,
\cite[Problem 4.2.2]{G2}. From now on  ${\mathcal
K}(X,Y)$ denotes the space of compact operators from $X$ to  another Banach
space $Y$ and $[1,\alpha]$ is the compact Hausdorff space of ordinals between
$1$ and $\alpha$ in the order topology.

\begin{problem}\label{pp} Classify, up to an isomorphism, the spaces of compact operators ${\mathcal
K}(l_{1}, C([1, \alpha], l_{p}))$, where $\alpha \geq \omega$ and $1 \leq p< \infty$.
\end{problem}

This problem covers some cases remaining from
the development in \cite{G2}, \cite{G3},
\cite{G4}, \cite{G5} and \cite{S2}, of the isomorphic classification of
some spaces of compact operators. 
We give the solution to above problem
in the case where  $\alpha$ is countable. The connection to the spaces
$C(K,X)$ comes through the injective tensor product. Notice that
that since $l_{1}$ has the approximation property, by \cite[Proposition 5.3]{DF}
we know that for every ordinal $\alpha$ and  $1\leq p < \infty$,
$${\mathcal K}(l_{1}, C([1, \alpha], l_{p})) \sim C(\beta \mathbb N \times
[1,\alpha], l_{p}).$$

The notation for the spaces is a bit cumbersome so we will shorten some expressions. When
the context clearly requires a compact Hausdorff space we will write $\alpha$
rather than $[1,\alpha]$. In particular, $C(\alpha, X)= C([1,\alpha],X).$ If $X=
\mathbb R$, we will write $C(K)$ rather than $C(K,\mathbb R).$ 
We will also adopt some standard notational conventions from Banach space theory. We write $X \sim Y$
when the Banach spaces $X$ and $Y$ are isomorphic, $Y \hr X$ when $X$
contains a copy of $Y$, that is, a subspace  isomorphic to  $Y$,
$Y \st{c}{\hr} X$ if $X$ contains a complemented copy of $Y$ and
$X \sai Y$ when  $Y$ is a quotient of  $X$.  For other notation and
terminology we refer the reader to  \cite{JL} and  \cite{LT}.

In $C(K,X)$ an obvious part of the difficulty with the isomorphic classification is that structures in $X$
can be used to find an alternate compact Hausdorff space $K_1$ so that $C(K_1,X)$ is isomorphic to
$C(K,X)$. A second difficulty is that structures may arise that are present in neither $C(K)$ nor $X$.
Consider the following result,  which was obtained
independently by Cembranos \cite[Main Theorem]{C} and Freniche
\cite[Corollary 2.5]{F}.  

\begin{theorem}\label{CF} Let $K$ be an
infinite compact Hausdorff space and  $X$  an infinite dimensional Banach
space. Then $$c_{0} \st{c}{\hr} C(K, X).$$ \end{theorem}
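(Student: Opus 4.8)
The plan is to realize $c_{0}$ inside $C(K,X)$ as the closed span of a sequence of disjointly supported ``bump tensors'' and then to build a bounded projection onto this span using a weak$^{*}$-null sequence of functionals supplied by the Josefson--Nissenzweig theorem. First I would set up the geometry in $K$. Since $K$ is infinite and Hausdorff, an elementary induction (repeatedly splitting off an open set from an infinite reservoir via the Hausdorff separation property) produces a sequence $(U_{n})$ of pairwise disjoint nonempty open subsets of $K$. I would choose $k_{n}\in U_{n}$ and, using the normality of the compact Hausdorff space $K$ together with Urysohn's lemma, pick $\phi_{n}\in C(K)$ with $0\le\phi_{n}\le 1$, $\phi_{n}(k_{n})=1$, and $\phi_{n}$ supported in $U_{n}$, so that the $\phi_{n}$ have pairwise disjoint supports.

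Next I would bring in the infinite dimensionality of $X$. By the Josefson--Nissenzweig theorem there is a sequence $(x_{n}^{*})$ in $X^{*}$ with $\|x_{n}^{*}\|=1$ that is weak$^{*}$-null; I would then choose $x_{n}\in X$ with $\|x_{n}\|=1$ and $x_{n}^{*}(x_{n})\ge 1/2$. Define $T\colon c_{0}\to C(K,X)$ by $T(a)(k)=\sum_{n}a_{n}\phi_{n}(k)x_{n}$. Because the supports are pairwise disjoint, at each $k$ at most one summand is nonzero, so $T$ is well defined, takes values in $C(K,X)$, and $\|T(a)\|=\sup_{n}|a_{n}|=\|a\|_{c_{0}}$; thus $T$ is an isometric embedding and $T(c_{0})$ is an isometric copy of $c_{0}$.

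It then remains to project onto $T(c_{0})$. Define $S\colon C(K,X)\to l_{\infty}$ by $S(f)=(x_{n}^{*}(f(k_{n})))_{n}$. The crux of the argument is the claim that $S(f)$ actually lies in $c_{0}$, and this is where compactness enters: the range $f(K)$ is a norm-compact subset of $X$, and a bounded weak$^{*}$-null sequence converges to $0$ uniformly on every norm-compact set. Indeed, if it did not, some subsequence would satisfy $|x_{n}^{*}(y_{n})|\ge\varepsilon$ with $y_{n}\in f(K)$; passing to a norm-convergent subsequence $y_{n}\to y$ gives $|x_{n}^{*}(y_{n})|\le|x_{n}^{*}(y)|+\|x_{n}^{*}\|\,\|y_{n}-y\|\to 0$, a contradiction. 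Hence $|x_{n}^{*}(f(k_{n}))|\le\sup_{y\in f(K)}|x_{n}^{*}(y)|\to 0$, so $S$ is a bounded operator into $c_{0}$. Since $\phi_{m}(k_{n})=\delta_{mn}$ on the chosen points, $S\circ T$ is the diagonal operator $\Lambda=\mathrm{diag}(x_{n}^{*}(x_{n}))$ with entries in $[1/2,1]$, which is invertible on $c_{0}$ with bounded inverse $D=\Lambda^{-1}$; therefore $P=T\circ D\circ S$ is a bounded idempotent on $C(K,X)$ with range $T(c_{0})$, giving $c_{0}\st{c}{\hr}C(K,X)$.

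I expect the genuine obstacle to be precisely the claim that $S$ maps into $c_{0}$: it forces the use of the Josefson--Nissenzweig theorem, since in general one cannot produce a normalized weak$^{*}$-null sequence by hand, together with the uniform-convergence-on-compacta fact above. Everything else---the topological extraction of the disjoint open sets, the isometric nature of $T$, and the assembly of $P$ from $T$, $D$, and $S$---is routine once this key point is secured.
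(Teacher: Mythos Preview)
The paper does not give its own proof of Theorem~\ref{CF}: it is quoted as a known result of Cembranos and Freniche, with references, and is used as background for the paper's new theorems. Your argument is correct and is essentially Cembranos's original proof---disjoint Urysohn bumps tensored with unit vectors to embed $c_{0}$, and a projection built from evaluations against a normalized weak$^{*}$-null sequence in $X^{*}$ furnished by the Josefson--Nissenzweig theorem. The paper itself implicitly endorses this route: after Proposition~\ref{extCF} it remarks that to recover the original Cembranos--Freniche theorem from their framework ``we would need to use the Josefson--Nissenzweig Theorem.'' One small point worth making explicit in your write-up is why $T(a)$ is continuous: this uses not just the disjointness of the supports but also that $a\in c_{0}$, so that the tail $\sum_{n>N}a_{n}\phi_{n}x_{n}$ has sup-norm at most $\sup_{n>N}|a_{n}|$, exhibiting $T(a)$ as a uniform limit of the continuous partial sums.
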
 
Consequently, both $C(\beta \mathbb N, C(\beta \mathbb N))$ and $C(\beta \mathbb N,\ell_2)$ contain a complemented
subspace isomorphic to $c_0$ despite the fact that neither $C(\beta \mathbb
N)$ nor $\ell_2$ have complemented copies of $c_0$.

It is natural
to ask if there are other $C(K)$ spaces for which the analogous result holds. The next more complicated $C(K)$
space is $C(\omega^\omega)$. Our first
result gives a negative answer in this case. We prove  that
even when $C(K)$ contains 
a complemented copy of $c_{0}$ and $X$ is an infinite dimensional
Banach  space,  $C(K, X)$ may not contain a complemented copy of
$C(\omega^\omega)$. Indeed, it is easy to see that
$C(\beta {\mathbb N} \times
\omega)$ contains a complemented copy of $c_{0}$. However,  in
Section \ref{compCK}, we  prove the following.  

\begin{theoremno}{Theorem}{\ref{com}.} Let $X$
be a Banach space. Then $$C(\omega^\omega)  \st{c}{\hr} C(\beta \mathbb
N \times \omega , X) \Longrightarrow c_{0} \hr X.$$ \end{theoremno}

We then extend Theorem \ref{com} to larger ordinals by using that result
and the structure of the ordinals.

\begin{theoremno}{Theorem}{\ref{excom}.} Let $X$ be a Banach space  containing no copy of $c_{0}$, $K$ an infinite compact metric space and $0 \leq \alpha< \omega_{1}$. Then
$$C(K)   \st{c}{\hr}     C(\beta {\mathbb N}  \times 
\omega^{\omega^\alpha}, X)    \Longleftrightarrow  C(K) \sim
C(\omega^{\omega^\xi}) \ \hbox{for some} \ \ 0 \leq \xi \leq \alpha.$$
\end{theoremno} 

In Section \ref{quotients} we turn our attention to  spaces of the form $C(\beta
{\mathbb N} \times \alpha,X)$ where $X$ satisfies some geometrical
properties, $(\dagger)$ and $(\ddagger)$,
that are modeled on simple properties of $\ell_p$, $1\le p <\infty.$ In
particular we get that $C(\omega^\omega)$ is not a quotient of $C(\beta
\mathbb N \times  \omega, l_{p})$,  $1<p< \infty$.

\begin{theoremno}{Theorem}{\ref{omomX}.} Suppose that $X$ is a Banach space
satisfying the daggers. Then $C(\omega^\omega)$
is not a quotient of  $C(\omega \times \beta {\mathbb N}, X).$
\end{theoremno}

Similar to the way we obtained Theorem \ref{excom} from Theorem \ref{com}
in Section  \ref{quotients}, we also extend
Theorem \ref{omomX} by proving.

\begin{theoremno}{Theorem}{\ref{exquo}.} Let $K$ be an infinite compact
metric space and $0 \leq \alpha < \omega_{1}$. Then $$C(\beta {\mathbb N}
\times \omega^{\omega^\alpha}, l_{p}) \sai  C(K)  \Longleftrightarrow
C(K) \sim C(\omega^{\omega^\xi}) \ \hbox{for some} \ \ 0 \leq \xi \leq
\alpha.$$ \end{theoremno}

The next section concerns the isomorphic classification of the  spaces $C(\beta {\mathbb N} \times \alpha, l_{p})$.  Our results   also provide us with immediate information
about the  isomorphic classifications of  a wider class of Banach
spaces. Namely,  the $C(\beta  {\mathbb N} \times K, X)$ spaces, where
$X$ contains no copy of $c_{0}$ and $K$ is a metrizable compact space,
that is, $C(K)$ is a separable space. Indeed, in Section \ref{compCK}
we prove the following cancellation law which is the
main application of the results of the paper. 
The case $X=l_{p}$, $1 \leq p< \infty$, gives the solution to Problem \ref{pp}.
\begin{theoremno}{Theorem}{\ref{main}.} Let $X$ be a Banach space
containing no copy of $c_{0}$. Then for any infinite compact
metric spaces $K_{1}$ and $K_{2}$ we have
$$C(\beta {\mathbb N} \times K_{1}, X) \sim C(\beta {\mathbb N} \times K_{2}, X) \Longleftrightarrow C(K_{1}) \sim C(K_{2}).$$
\end{theoremno}

Moreover, in Section \ref{Clpom}, we accomplish the isomorphic classification of the spaces  $C(\beta  {\mathbb N} \times K, l_{p})$ by considering also the case where $K$ is finite.  In order to do this, we first prove  a general result about the spaces of
compact operators $\mathcal K(\ell_p(X),\ell_q(Y))$ (Theorem \ref{isok}).   From that we deduce the following.
\begin{theoremno}{Theorem}{\ref{iso}.} Let $1\leq p< \infty$. Then
$$C(\beta \mathbb N \times \omega, l_{p}) \sim C(\beta {\mathbb N}, l_{p}).$$
\end{theoremno}

Finally, in Section \ref{prob}, we pose some elementary questions which this work raises.

\section{Preliminaries}\label{Sec 2}

In this section we recall some results that we will use in the sequel. 

In 1920 Mazurkiewicz and
Sierpi\'nski  showed that if $K$ is an countable compact metric space then it is
homeomorphic to an interval of ordinals $[1, \alpha]$ with $\omega \leq
\alpha< \omega_{1}$ \cite{MS}. This was used in 
the isomorphic classification of the $C(\alpha)$ spaces,
$\omega  \leq \alpha< \omega_{1}$,  obtained in 1960
by Bessaga and Pe\l czy\'nski. They  showed that  if   $\omega \leq \alpha \leq \beta< \omega_{1}$ then   $C(\alpha)$ is isomorphic to $C(
\beta)$ if and only if
$\beta < \alpha^\omega$, see  \cite{BP2} and  \cite{R}. In particular this means
that the spaces $C(\omega^{\omega^\gamma})$, for $0 \le \gamma <\omega_1$, are a
complete set of representatives of the isomorphism classes of $C(K)$ where $K$ is
a countably infinite, compact metric space. 

Bessaga and Pe\l czy\'nski
actually prove some things for the case of $C(K,X),$ where $X$ is a
Banach space.

\begin{proposition} \label{BPiso}
Suppose $X$ is a Banach space and $\alpha$ is an infinite ordinal. Then  $C({\alpha},X)$ is isomorphic to 
\begin{enumerate}
\item{}  $C_0({\alpha},X)=\{f \in
C({\alpha},X): f(\alpha)=0 \}$

\leftline{and to}

\item{}  $C({\omega^{\omega^\beta}},X)$
whenever $\omega^{\omega^\beta} \le \alpha <\omega^{\omega^{\beta+1}}.$
\end{enumerate}
\end{proposition}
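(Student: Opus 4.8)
The plan is to prove both isomorphisms essentially simultaneously, following the classical Bessaga--Pe\l czy\'nski reduction to a "decomposition into countably many blocks" argument, now carried out with values in $X$. First I would record the elementary isomorphism $C([1,\alpha],X) \sim C_0([1,\alpha],X) \oplus X$, obtained by splitting off the value at the top point $\alpha$ via the projection $f \mapsto f(\alpha)$ (constant function); its kernel is $C_0([1,\alpha],X)$. So for (1) it suffices to absorb the extra summand $X$, and for (2) it suffices to work with whichever of the two spaces is more convenient. The key structural observation is that for any infinite ordinal $\alpha$ one can write $[1,\alpha]$ as a countable increasing union of clopen initial segments (using that $\alpha$ has countable cofinality when it is a successor, and choosing a cofinal $\omega$-sequence when it is a limit of countable cofinality; for larger cofinalities one instead peels off one "copy" at a time). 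Concretely, if $\omega^{\omega^\beta} \le \alpha < \omega^{\omega^{\beta+1}}$, Cantor normal form lets us write $\alpha = \omega^{\omega^\beta}\cdot m + \gamma$ with $m\ge 1$ and $\gamma<\omega^{\omega^\beta}$, which already gives, via the clopen decomposition of $[1,\alpha]$ into $m$ pieces order-isomorphic to initial segments of $\omega^{\omega^\beta}$ plus a tail, a reduction to the case $\alpha<\omega^{\omega^\beta}\cdot\omega = \omega^{\omega^\beta+1}$; iterating / using additive absorption $\omega^{\omega^\beta}+\omega^{\omega^\beta}=\omega^{\omega^\beta}\cdot 2$ at the level of $C_0$-spaces reduces everything to the single model $[1,\omega^{\omega^\beta}]$.

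The engine behind the reductions is the standard vector-valued Pe\l czy\'nski decomposition lemma: if $K=\bigcup_{n} U_n$ is an increasing union of clopen sets with $U_0 = \emptyset$, then $C_0(K,X)$ is isomorphic to the $c_0$-sum $\left(\sum_n C_0(U_{n+1}\setminus \text{cl}(U_n)^{\circ},X)\right)_{c_0}$ — more precisely one uses that $C(K,X)$ for $K$ a clopen-decomposable space splits as a $c_0$-type sum of the pieces, because a function with small values off a finite union of the clopen pieces has small norm. From this, two facts follow formally: (a) $C_0(\omega^{\omega^\beta},X)$ is isomorphic to its own $c_0$-sum with itself countably many times, hence (Pe\l czy\'nski's trick) $C_0(\omega^{\omega^\beta},X)\sim C_0(\omega^{\omega^\beta},X)\oplus Y$ for any $Y$ that embeds complementably in it — in particular $Y=X$ (the constants on one clopen piece) — which yields (1); and (b) any $\alpha$ with $\omega^{\omega^\beta}\le\alpha<\omega^{\omega^{\beta+1}}$ gives $C_0([1,\alpha],X)$ a clopen decomposition whose pieces are all of the form $C_0([1,\delta],X)$ with $\delta\le\omega^{\omega^\beta}$, and a Pe\l czy\'nski back-and-forth (each of $C_0(\omega^{\omega^\beta},X)$ and $C_0([1,\alpha],X)$ is complemented in the other, and each is isomorphic to its own square in the relevant $c_0$-sense) gives $C_0([1,\alpha],X)\sim C_0(\omega^{\omega^\beta},X)$, which with (1) gives (2).

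The one genuine obstacle is the combinatorial bookkeeping in step (b): one must check that the ordinal arithmetic really does produce, for every $\alpha$ in the stated interval, a clopen partition of $[1,\alpha]$ into countably many intervals each order-homeomorphic to an initial segment of $\omega^{\omega^\beta}$ (this is where the hypothesis $\alpha<\omega^{\omega^{\beta+1}}=\sup_k \omega^{\omega^\beta\cdot k}$ is used, together with indecomposability of $\omega^{\omega^\beta}$ under addition), and conversely that $[1,\omega^{\omega^\beta}]$ decomposes so as to reproduce $[1,\alpha]$ up to the $c_0$-sum manipulations. Everything else is the routine vector-valued analogue of the scalar Bessaga--Pe\l czy\'nski argument: the $c_0$-sum decomposition lemma and Pe\l czy\'nski's decomposition method go through verbatim with $X$-valued functions, since they only use the sup norm structure of $C(K,\cdot)$ and never the scalar field. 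I would therefore present step (a) and the decomposition lemma in full, and treat step (b) by citing the scalar argument in \cite{BP2}, \cite{R} and indicating that it localizes on clopen pieces and hence tensors trivially with $X$.
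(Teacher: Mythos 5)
The paper itself gives no proof of this proposition: it is quoted as a result of Bessaga and Pe\l czy\'nski, and the vector-valued transfer is justified implicitly by the remark in Section~2 that $C(K,X)$ is isometric to the injective tensor product of $C(K)$ with $X$, so that $K$ may be replaced by $K_1$ whenever $C(K)\sim C(K_1)$. Your overall strategy (split off the value at $\alpha$, decompose $[1,\alpha]$ into clopen pieces, run Pe\l czy\'nski's decomposition method on $c_0$-sums of $X$-valued spaces) is the standard route, and your closing observation that the scalar argument ``localizes on clopen pieces and hence tensors trivially with $X$'' is in fact the cleanest complete proof: the injective norm is a uniform crossnorm, so $T\otimes I_X$ is an isomorphism whenever $T$ is, and $C_0(\alpha,X)$ is isometric to the injective tensor product of $C_0(\alpha)$ with $X$; the whole proposition is then a two-line corollary of the scalar theorems in \cite{BP2}, \cite{R}. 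That is essentially what the paper relies on.

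There is, however, a concrete error in your explicit reduction. The claim that Cantor normal form gives $\alpha=\omega^{\omega^\beta}\cdot m+\gamma$ with $m\ge 1$ \emph{finite} and $\gamma<\omega^{\omega^\beta}$ is false for most $\alpha$ in the stated interval: already $\alpha=\omega^2\in[\omega,\omega^\omega)$ (the case $\beta=0$) admits no such expression, and for $\alpha=\omega^{\omega^\beta\cdot 2}=(\omega^{\omega^\beta})^2$ the quotient in the division by $\omega^{\omega^\beta}$ is $\omega^{\omega^\beta}$ itself. Likewise, a clopen partition of $[1,\alpha]$ into countably many \emph{order intervals} each of type at most $\omega^{\omega^\beta}$ cannot exist when $\alpha$ is additively indecomposable and exceeds $\omega^{\omega^\beta}$, since whichever interval contains the top point $\alpha$ must have order type $\alpha$. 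Thus the range $\omega^{\omega^\beta}\cdot\omega\le\alpha<\omega^{\omega^{\beta+1}}$ --- which is the entire content of the Bessaga--Pe\l czy\'nski theorem --- is not reached by your ``reduction to $\alpha<\omega^{\omega^\beta}\cdot\omega$''; handling it requires the Mazurkiewicz--Sierpi\'nski homeomorphisms (which rearrange points and are not order isomorphisms of intervals) or the telescoping induction on the exponent in \cite{BP2}. You do quarantine this by deferring step (b) to the scalar literature, but the purported reduction should be deleted rather than repaired. A smaller point: your absorption argument for part (1) uses that $C_0(\omega^{\omega^\beta},X)$ is its own $c_0$-sum, which presupposes countable cofinality; for arbitrary infinite $\alpha$ it is safer to absorb the summand $X$ into a complemented copy of $c_0(X)$ supported on an $\omega$-sequence of isolated ordinals converging to a limit point, which exists in every infinite $[1,\alpha]$.
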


Thus for some Banach spaces $X$ there may be fewer isomorphism classes for the
spaces $C(K,X)$ with $K$ countable, compact metric, than for
the case $X=\mathbb R.$ That is what happens for Banach spaces which are
isomorphic to their squares or to the $c_0$-sum of infinitely many copies
of the space. Indeed, for $\ell_p$, $1\le p <\infty$, the finite ordinals all yield
the same space; for $c_0$ all of the spaces $C(\alpha,c_0)$, $\alpha
<\omega^\omega$, are isomorphic.

\begin{remark}\label{pro} The order structure on the spaces of ordinals make it easy to find
contractively complemented
subspaces of $C(\omega^\alpha)$ isometric to $C(\omega^\beta)$ for
$\beta<\alpha$. Indeed if $A$ is a closed subset of $[1,\omega^\alpha]$
and $A^{(1)}$ is the set of non-isolated points of $A$,
we can define a subspace $Y$
of $C(\omega^\alpha)$ isometric to $C(A)$ by
\begin{multline*}
Y=\{f \in C(\omega^\alpha) : f(\gamma)=f(\xi), \  \forall \gamma \text{
such that }\\ \sup \{\rho<\xi:\rho \in A\}<\gamma<\xi \text{ and } \xi \in
A \setminus A^{(1)}\}.\end{multline*}
We can define a projection onto $Y$ by restricting to $A$ and then
extending by the formula in the definition of $Y$, i.e.,
\begin{multline*}
Lg(\gamma)=g(\xi)\text{ for all
}\gamma \text{
such that } \\ \sup \{\rho<\xi:\rho \in A\}<\gamma<\xi \text{ and } \xi \in
A \setminus A^{(1)}.\end{multline*}

For $\gamma>\sup A$ let $Lg(\gamma)=0.$
\end{remark}

The spaces $c_0$ and $C(\beta \mathbb N)$ play a prominent role in this
paper so we now recall some important properties of these spaces. Bessaga and Pe\l czy\'nski made a study of $c_0$ in \cite{BP1} and introduced the
notion of a weakly unconditionally Cauchy sequence (w.u.c.). A sequence
$(x_n)$ in  a Banach space $X$
is said to be a w.u.c. if and only if for every
$x^* \in X^*,$ $\sum_n |x^*(x_n)| < \infty.$ A sequence equivalent to
standard basis of $c_0$ is clearly a w.u.c. We will use the following
result from their paper.

\begin{proposition}
Suppose that $X$ is a Banach space which has no subspace isomorphic to
$c_0$ then every w.u.c. in $X$ is unconditionally converging. Consequently,
if $(x_n)$ is a w.u.c. in $X$, $\lim_n \|x_n\| = 0.$
\end{proposition}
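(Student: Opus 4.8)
The plan is to prove the contrapositive of the first assertion: if $(x_n)$ is w.u.c.\ in $X$ but the series $\sum_n x_n$ is \emph{not} unconditionally convergent, then $X$ contains an isomorphic copy of $c_{0}$. Granting this, the ``consequently'' follows at once, since an unconditionally convergent series is in particular convergent and so its terms tend to $0$ in norm.

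I would first record the standard boundedness consequence of the definition of w.u.c.: for any w.u.c.\ sequence $(z_n)$ the linear functionals $x^*\mapsto x^*\big(\sum_{n\in F}\varepsilon_n z_n\big)$ (over finite $F\subseteq\mathbb N$ and signs $\varepsilon_n\in\{-1,1\}$) are pointwise bounded on $X^*$ precisely because $(z_n)$ is w.u.c., so the uniform boundedness principle yields a constant $M$ with $\sum_n|x^*(z_n)|\le M\|x^*\|$ for all $x^*$, and hence $\big\|\sum_n a_n z_n\big\|\le M\|(a_n)\|_\infty$ for every finitely supported scalar sequence $(a_n)$.

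Next, since $\sum_n x_n$ fails the Cauchy criterion for unconditional convergence, I would choose $\varepsilon>0$ and consecutive finite blocks $F_1<F_2<\cdots$ of $\mathbb N$ so that $y_j:=\sum_{n\in F_j}x_n$ satisfies $\|y_j\|\ge\varepsilon$ for all $j$. Disjointness of the blocks makes $(y_j)$ again w.u.c.\ (because $\sum_j|x^*(y_j)|\le\sum_n|x^*(x_n)|<\infty$), so the first step applied to $(y_j)$ gives $\|y_j\|\le M$ and $\big\|\sum_j a_jy_j\big\|\le M\|(a_j)\|_\infty$ for finitely supported $(a_j)$; moreover $x^*(y_j)\to0$ for every $x^*$, so $(y_j)$ is a seminormalized weakly null sequence.

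It remains to produce the lower $c_0$-estimate, which is the one delicate point. A seminormalized weakly null sequence need not itself be equivalent to the unit vector basis of $c_0$, but by the Bessaga--Pe\l czy\'nski selection principle from \cite{BP1} it has a basic subsequence; replacing $(y_j)$ by that subsequence and relabelling, I may assume $(y_j)$ is basic with basis constant $K$. Its biorthogonal functionals, extended to $X$ by Hahn--Banach, then have norm at most $2K/\|y_j\|\le 2K/\varepsilon$, whence $\|(a_j)\|_\infty\le(2K/\varepsilon)\big\|\sum_j a_jy_j\big\|$ for every finitely supported $(a_j)$. Combined with the upper estimate, this shows that $e_j\mapsto y_j$ extends to an isomorphism of $c_0$ onto $\overline{\mathrm{span}}\{y_j\}\subseteq X$, contradicting the hypothesis that $X$ has no subspace isomorphic to $c_0$, so $\sum_n x_n$ is unconditionally convergent after all. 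I expect this lower estimate to be the main obstacle: weak unconditional Cauchyness controls only ``one triangle'' of the array of values of the $y_j$ against their norming functionals, so a naive diagonal selection does not suffice, and it is the biorthogonality supplied by a basic subsequence that closes the gap.
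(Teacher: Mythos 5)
Your argument is correct and complete. The paper does not actually prove this proposition --- it is quoted as a known result of Bessaga and Pe\l czy\'nski \cite{BP1} --- and your proof is essentially the classical argument from that source: block a w.u.c.\ series that fails the unconditional Cauchy criterion to obtain a seminormalized weakly null w.u.c.\ sequence, pass to a basic subsequence, and combine the uniform-boundedness upper estimate with the biorthogonal-functional lower estimate to exhibit a copy of $c_0$.
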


$C(\beta
\mathbb N)$ is isometric to $\ell_\infty= \ell_\infty(\mathbb N)$, the
space of bounded sequences with the supremum norm. For any non-empty index
set $\Gamma$, $\ell_\infty(\Gamma)$ is injective, i.e., it is complemented
in any space which contains it. $c_0$ is separably injective, i.e., it is
complemented in any separable Banach space that contains it. $c_0$ is not
complemented in $\ell_\infty$ and in fact the only infinite dimensional
complemented subspaces  of $c_0$ or $\ell_\infty$ are isomorphs of the
whole space, \cite[pages 54 and 57]{LT}. $\ell_\infty$ is an example of a Grothendieck space,
i.e., any
weak${}^*$ convergent sequence in the dual is actually weakly convergent \cite[page 179]{DU}.
Actually this is the essential property of $C(\beta \mathbb N)$ that we use.
Except in one or two cases, e.g., Theorem \ref{BNextCF},
the results could be rewritten with $C(K)$ which
is Grothendieck space in place of $C(\beta \mathbb N)$.

While $C(\beta \mathbb N)$ and $\ell_\infty$ are isometric,  for many
infinite dimensional Banach space $X$,
$C(\beta \mathbb N,X)$ is not isomorphic to $\ell_\infty(X)=\{(x_n): x_n
\in X \text{ for all }n, \|(x_n)\|=\sup_n\|x_n\|_X<\infty\}.$  This is the
case if $X$ does not contain a complemented subspace isomorphic to $c_0$
since, by \cite{LR},  $\ell_\infty(X)$ only contains a complemented
subspace isomorphic to $c_0$ if $X$ does.

We identify $C(\beta\mathbb N, X)^*$ with the space
of $X^*$-valued regular Borel measures on $C(\beta\mathbb N)$ with
$\|\mu\|=\sup \sum_n \|\mu(A_n)\|_{X^*}$ where the supremum is over all
partitions of $\beta\mathbb N$ into disjoint clopen sets $\{A_n\}$,
\cite[page 182]{DU}.
Moreover if $(\mu_n)$
is a weak${}^*$ converging sequence of measures with limit $\mu$, then
for any clopen set $A$, $(\mu_n(A))$ converges weak${}^*$ in $X^*$
to $\mu(A).$

It will be convenient at times to shift point of view as to the underlying
compact Hausdorff space and the range space. Thus we will use the
fact that $C(K_1 \times K_2,X)$ is isomorphic to $C(K_1,C(K_2,X))$  and to
$C(K_2,C(K_1,X))$ where $K_1$
and $K_2$ are compact Hausdorff spaces. Also because $C(K,X)$ is isometric to the injective tensor product $C(K)\stackrel{\scriptscriptstyle
\vee}{\otimes}
X$, we may replace $K$ by $K_1$ if $C(K)$ is isomorphic to $C(K_1)$.

Let $\max_\sigma \{\beta_1,\beta_2\}$ be largest ordinal
$\beta=\gamma_1+\alpha_1+\gamma_2+\alpha_2+\dots
\gamma_k+\alpha_k$ obtained by writing $\beta_1=\gamma_1+\gamma_2 +
\dots +\gamma_k$ and
$\beta_2=\alpha_1+\alpha_2 + \dots +\alpha_k$, where $\gamma_j \ge 0$ and
$\alpha_j \ge 0$ for all $j$.
This can also be obtained by writing the ordinals $\beta_1$ and $\beta_2$
in terms of prime
components and arranging the terms of the sum in decreasing order. 

The topological results in \cite{MS} are based on the notion of derived
set. Recall that $K^{(0)}=K.$ For any ordinal $\alpha$,
$K^{(\alpha+1)}$ is the set of non-isolated
points in $K^{(\alpha)},$ and for a limit ordinal $\beta$, $K^{(\beta)}=
\cap_{\alpha<\beta} K^{(\alpha)}.$ We will only use this with countable
compact spaces and will refer to the smallest ordinal
$\alpha$ such that $K^{(\alpha)} \ne \emptyset$ and $K^{(\alpha+1)} =
\emptyset$ as the derived order of $K$.

\begin{lemma}\label{ordprod} Let $K_1$ and $K_2$
be countable compact metric spaces and
$\beta_1,\beta_2$ be countable ordinals such that $K_1^{(\beta_1)}$ and
$K_2^{(\beta_2)}$ are finite non-empty sets. Then $(K_1 \times
K_2)^{(\max_\sigma \{\beta_1,\beta_2\})}$ is a finite non-empty
set.
\end{lemma}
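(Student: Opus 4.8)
The plan is to reduce the lemma to a formula for the Cantor--Bendixson rank of a point of $K_1\times K_2$ in terms of the ranks of its coordinates. For $z$ in a countable compact metric space $K$, write $\operatorname{rk}_K(z)$ for the unique ordinal $\mu$ with $z\in K^{(\mu)}\setminus K^{(\mu+1)}$; then $z\in K^{(\mu)}$ iff $\operatorname{rk}_K(z)\ge\mu$, the derived order of $K$ equals $\sup_z\operatorname{rk}_K(z)$, and it is attained exactly on the top derived set, which is finite. I will use freely that in a metric space a non-isolated point of a set $A$ is the limit of a sequence in $A\setminus\{z\}$, that each $K^{(\gamma)}$ is closed, and the elementary properties of $\max_\sigma$: it is the natural (Hessenberg) sum $\oplus$ (this is exactly the ``prime components in decreasing order'' description recalled above), so it is commutative, associative, strictly increasing in each variable, satisfies $(\mu+1)\oplus\nu=(\mu\oplus\nu)+1$, is a successor iff one of the arguments is, and obeys the recursion $\mu\oplus\nu=\sup\{(\mu'\oplus\nu)+1,\ (\mu\oplus\nu')+1:\mu'<\mu,\ \nu'<\nu\}$; I would record the last fact as a short preliminary observation about $\max_\sigma$.

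The heart of the proof is the identity $\operatorname{rk}_{K_1\times K_2}(x,y)=\operatorname{rk}_{K_1}(x)\oplus\operatorname{rk}_{K_2}(y)$, which I would prove as two inequalities. For ``$\ge$'' I would show by transfinite induction on $\alpha\oplus\beta$ that $K_1^{(\alpha)}\times K_2^{(\beta)}\subseteq(K_1\times K_2)^{(\alpha\oplus\beta)}$. If, say, $\alpha=\alpha_0+1$, a point $x\in K_1^{(\alpha)}$ is the limit of a sequence $x_n\in K_1^{(\alpha_0)}\setminus\{x\}$, so by the inductive hypothesis $(x_n,y)\in(K_1\times K_2)^{(\alpha_0\oplus\beta)}$ and $(x,y)$ is a non-isolated point of that set, hence lies in $(K_1\times K_2)^{((\alpha_0\oplus\beta)+1)}=(K_1\times K_2)^{(\alpha\oplus\beta)}$; the case where $\beta$ is a successor is symmetric, and these exhaust the case $\alpha\oplus\beta$ a successor. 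If $\alpha\oplus\beta$ is a limit then $\alpha$ and $\beta$ are both limits, and for each $\delta<\alpha\oplus\beta$ the recursion provides $\alpha'<\alpha$ with $\delta\le\alpha'\oplus\beta$ (or symmetrically $\beta'<\beta$ with $\delta\le\alpha\oplus\beta'$); since $\alpha'\oplus\beta<\alpha\oplus\beta$ and $K_1^{(\alpha)}\subseteq K_1^{(\alpha')}$, the inductive hypothesis gives $(x,y)\in(K_1\times K_2)^{(\alpha'\oplus\beta)}\subseteq(K_1\times K_2)^{(\delta)}$, and letting $\delta$ vary places $(x,y)$ in $(K_1\times K_2)^{(\alpha\oplus\beta)}$.

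For ``$\le$'' I would show by transfinite induction on $\gamma$ that $(x,y)\in(K_1\times K_2)^{(\gamma)}$ implies $\operatorname{rk}_{K_1}(x)\oplus\operatorname{rk}_{K_2}(y)\ge\gamma$. The limit case is immediate from $(K_1\times K_2)^{(\gamma)}=\bigcap_{\delta<\gamma}(K_1\times K_2)^{(\delta)}$. For $\gamma=\delta+1$, set $\alpha=\operatorname{rk}_{K_1}(x)$ and $\beta=\operatorname{rk}_{K_2}(y)$; the inductive hypothesis already gives $\alpha\oplus\beta\ge\delta$, and I must exclude $\alpha\oplus\beta=\delta$. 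Choose $(x_n,y_n)\to(x,y)$ in $(K_1\times K_2)^{(\delta)}\setminus\{(x,y)\}$; by induction $\operatorname{rk}(x_n)\oplus\operatorname{rk}(y_n)\ge\delta$, while, as $K_1^{(\alpha+1)}$ and $K_2^{(\beta+1)}$ are closed and miss $x$ and $y$, eventually $\operatorname{rk}(x_n)\le\alpha$ and $\operatorname{rk}(y_n)\le\beta$. If $\alpha\oplus\beta=\delta$, strict monotonicity of $\oplus$ in each variable forces $\operatorname{rk}(x_n)=\alpha$ and $\operatorname{rk}(y_n)=\beta$ for large $n$; since $(x_n,y_n)\ne(x,y)$, infinitely often $x_n\ne x$ or infinitely often $y_n\ne y$, yielding a sequence in $K_1^{(\alpha)}\setminus\{x\}$ converging to $x$ or one in $K_2^{(\beta)}\setminus\{y\}$ converging to $y$, contradicting $x\notin K_1^{(\alpha+1)}$, respectively $y\notin K_2^{(\beta+1)}$. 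Hence $\alpha\oplus\beta\ge\delta+1=\gamma$.

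Granting the identity, the lemma is immediate: since $K_i^{(\beta_i)}$ is finite and nonempty, $K_i^{(\beta_i+1)}=\emptyset$, so $\beta_i$ is the derived order of $K_i$, every point of $K_i$ has rank $\le\beta_i$, and the points of rank $\beta_i$ are exactly those of $K_i^{(\beta_i)}$. Taking $x\in K_1^{(\beta_1)}$ and $y\in K_2^{(\beta_2)}$ gives $(x,y)\in(K_1\times K_2)^{(\beta_1\oplus\beta_2)}=(K_1\times K_2)^{(\max_\sigma\{\beta_1,\beta_2\})}$, so this set is nonempty; and any $(x,y)$ in it has $\operatorname{rk}_{K_1}(x)\oplus\operatorname{rk}_{K_2}(y)\ge\beta_1\oplus\beta_2$ with $\operatorname{rk}_{K_1}(x)\le\beta_1$ and $\operatorname{rk}_{K_2}(y)\le\beta_2$, so strict monotonicity forces $(x,y)\in K_1^{(\beta_1)}\times K_2^{(\beta_2)}$, a finite set. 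The main obstacle is the limit case of the ``$\ge$'' induction: one cannot replace the recursion for $\oplus$ by naive continuity (for instance $\sup_{n<\omega}(n\oplus1)=\omega\ne\omega+1=\omega\oplus1$), so it is worth isolating the recursive description of $\max_\sigma$ before starting; the rest is bookkeeping with derived sets and with convergent sequences in metric spaces.
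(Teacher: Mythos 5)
Your proof is correct, but it takes a genuinely different route from the paper's. You reduce the lemma to the pointwise rank formula $\operatorname{rk}_{K_1\times K_2}(x,y)=\operatorname{rk}_{K_1}(x)\oplus\operatorname{rk}_{K_2}(y)$, where $\oplus$ is the Hessenberg sum (which is indeed what $\max_\sigma$ computes), proving the two inclusions by transfinite induction on $\alpha\oplus\beta$ and on $\gamma$ respectively; the paper instead runs a double induction on the pair $(\beta_2,\beta_1)$, writing $K_i\setminus\{k_i\}$ as a disjoint union of clopen sets of smaller derived order and covering $K_1\times K_2\setminus\{(k_1,k_2)\}$ by the rectangles $C_{1,n}\times[1,\beta_2]$ and $[1,\beta_1]\times C_{2,n}$, then checking four cases of how $\max_\sigma$ behaves under the decomposition. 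Your approach buys more: it identifies $(K_1\times K_2)^{(\beta_1\oplus\beta_2)}$ exactly as $K_1^{(\beta_1)}\times K_2^{(\beta_2)}$ and gives the rank of every point of the product, whereas the paper only needs (and only sketches) finiteness and nonemptiness of the top derived set. The price is the preliminary recursion $\mu\oplus\nu=\sup\{(\mu'\oplus\nu)+1,\ (\mu\oplus\nu')+1:\mu'<\mu,\ \nu'<\nu\}$, which you correctly flag as needing its own (standard, Cantor-normal-form) verification; without it the limit case of your first induction would fail, since naive continuity of $\oplus$ is false. Two trivial points to tidy: in that limit case, $\alpha$ and $\beta$ need not both be limits (one may be $0$), though your argument covers this since the supremum simply ranges over the nonempty side; and in the successor case of the ``$\le$'' direction you should pass to a subsequence along which $x_n\ne x$ for all $n$ (or $y_n\ne y$ for all $n$) before concluding. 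Neither affects correctness.
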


\begin{proof}[Sketch of Proof] First we can assume that $K_1^{(\beta_1)}$ and
$K_2^{(\beta_2)}$ are singletons, $k_1$ and $k_2$, respectively.
The proof is an induction on $\beta_2$ and for each $\beta_2$ on $\beta_1$,
$0\le \beta_1\le
\beta_2.$ The result is clear for $\beta_2 =0, 1$ and $\beta_1=0,1$ and for
all $\beta_2<\omega_1$ and $\beta_1=0.$ Assume $1<\beta_2$, $0<\beta_1\le
\beta_2$ and that the result holds for $K_2$ of order $\beta<\beta_2$ and
$K_1$ of order $\gamma\le \beta$
and for $\beta=\beta_2$ and $\gamma<\beta_1.$

To see that $$(K_1 \times
K_2)^{(\max_\sigma \{\beta_1,\beta_2\})}=\{(k_1,k_2)\},$$ notice that by the
induction assumption, if
$\alpha_1 <\beta_1$, $\alpha_2 <\beta_2$, $m_1 \in K_1^{(\alpha_1)}\setminus
K_1^{(\alpha_1+1)}$ and
$m_2 \in K_2^{(\alpha_2)}\setminus K_2^{(\alpha_2+1)}$, then
$$(m_1,m_2) \in (C_1 \times C_2)^{(\max_\sigma
\{\alpha_1,\alpha_2\})},$$ where $C_1$ and $C_2$, are apropriately chosen
clopen subsets of
$K_1$ and $K_2$,
respectively. For $i=1,2$ write $K_i\setminus \{k_i\}$,
as a disjoint union of clopen sets $C_{1,n}$,
derived order $\beta_i-1$ for all $n$ or derived order
$\beta_{i,n}$ and $(\beta_{i,n})$
increasing to $\beta_i$. Each set $C_{1,n} \times [1,\beta_2]$ and
$[1,\beta_1]\times C_{2,n}$ satisfies the induction hypothesis (possibly
symmetrized). Notice that $$\cup_n C_{1,n} \times [1,\beta_2] \cup
[1,\beta_1]\times C_{2,n}= K_1 \times K_2 \setminus \{(k_1,k_2)\}.$$
There are four cases to check. It is easy to see that
$$\max_\sigma\{\beta_1,\beta_2-1\}+1=\max_\sigma\{\beta_1,\beta_2\},$$ in the
first case, and
$\max_\sigma\{\beta_1,\beta_{2,n}\}$ increases to
$\max_\sigma\{\beta_1,\beta_2\}$ in the second case. The other two case are
similar.
\end{proof}

The  lemma shows that we do not really gain anything from simple
manipulations of compact metric spaces. Indeed, for countable ordinals
$\alpha$ and $\gamma$, we have that $$C(\beta \mathbb N \times
\omega^{\omega^\alpha}, C(\omega^{\omega^\gamma},X)) \sim 
C(\beta \mathbb N \times
\omega^{\omega^\alpha}\times \omega^{\omega^\gamma},X).$$ However
$$\omega^{\omega^{\max \{\alpha,\gamma\}}}\le \omega^{\max_\sigma \{\omega^\alpha,\omega^\gamma\}} \le
\omega^{\omega^{\max \{\alpha,\gamma\}}2}$$ and thus
$$C(\beta \mathbb N \times
\omega^{\omega^\alpha}\times \omega^{\omega^\gamma},X) \sim C(\beta \mathbb N \times
\omega^{\omega^{\max \{\alpha,\gamma\}}},X).$$

In a series of papers from the 1970's the first author developed some tools
for working with subspaces of $C(K)$ spaces isomorphic to $C(\alpha)$.
Some of the proofs in this paper are motivated in part by that work and
versions of some of the technical tools will be needed here. The first
is similar to \cite[Lemma 2.5]{A}.

\begin{lemma}\label{compress} Given a positive integer $k$ and $\epsilon>0$
there is a positive integer $n$
such that if $(x^*_\alpha)_{\alpha \le \omega^n}$ is a sequence in the
unit ball of the dual of a Banach space $X$ such that the function
$\alpha \rightarrow
x^*_\alpha$ is a order to weak${}^*$ continuous map, then there is a closed
subset
$B$ of $[1,\omega^n]$, order isomorphic and homeomorphic via the order
isomorphism to $[1,\omega^k]$ such that
for all $\beta, \beta' \in B,$ $|\|x^*_\beta\|-\|x^*_{\beta'}\||<\epsilon.$
\end{lemma}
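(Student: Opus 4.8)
The plan is to reduce the statement to a finite-dimensional pigeonhole argument via a Ramsey-type stabilization of the norm function along a transfinite sequence. First I would discretize the possible norm values: partition $[0,1]$ into finitely many intervals $I_1,\dots,I_M$ of length less than $\epsilon$ (so $M$ depends only on $\epsilon$). The map $\alpha \mapsto \|x^*_\alpha\|$ need not be order-to-weak${}^*$ continuous in the norm — only the map $\alpha \mapsto x^*_\alpha$ is — but weak${}^*$ lower semicontinuity of the norm gives us that $\alpha \mapsto \|x^*_\alpha\|$ is lower semicontinuous on $[1,\omega^n]$, and that is exactly the regularity we need to find large order-closed homeomorphic copies of $[1,\omega^k]$ on which it is nearly constant. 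The key point is a combinatorial fact about the ordinals: if $f:[1,\omega^n]\to\{1,\dots,M\}$ is any function, then for $n$ large enough relative to $k$ and $M$ there is a subset $B$ order-isomorphic and homeomorphic to $[1,\omega^k]$ on which $f$ is constant. This is where the choice of $n=n(k,\epsilon)$ comes from.

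The main technical step is therefore this ordinal Ramsey statement, which I would prove by induction on $k$. For $k=0$, $[1,\omega^0]=[1,1]$ is a single point, and any singleton works with $n=1$. For the inductive step, suppose $n(k-1,M)$ works. Given $f:[1,\omega^n]\to\{1,\dots,M\}$ with $n$ to be chosen, I would partition $[1,\omega^n]$ into its natural blocks: writing $\omega^n = \sup_j (\omega^{n-1}\cdot j)$, the intervals $B_j=(\omega^{n-1}(j-1),\,\omega^{n-1}j]$ are each order-homeomorphic to $[1,\omega^{n-1}]$, and there are countably many of them converging up to the top point $\omega^n$. One should instead iterate this to produce, for a suitable large $n$, infinitely many disjoint clopen intervals each order-homeomorphic to $[1,\omega^{n'}]$ with $n'=n(k-1,M)$; by the inductive hypothesis each such interval contains a copy $B^{(i)}$ of $[1,\omega^{k-1}]$ with $f$ constant on it, and since there are infinitely many of them and only $M$ colors, infinitely many of the $B^{(i)}$ carry the \emph{same} color $c$. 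Selecting one limit point as the top and countably many of these same-color copies $B^{(i)}$ accumulating at it assembles a closed set order-isomorphic and homeomorphic to $[1,\omega^k]$ on which $f\equiv c$. Tracking the arithmetic, $n(k,M)$ can be taken of the form $n(k-1,M)+1$ iterated, giving a bound linear in $k$ (with $M=M(\epsilon)$), which suffices.

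Finally I would pull this back to the lemma. Set $f(\alpha)=j$ whenever $\|x^*_\alpha\|\in I_j$ (choosing the least such $j$ if an endpoint is hit), pick $n=n(k,M(\epsilon))$, and apply the ordinal Ramsey fact to get $B$ order-homeomorphic to $[1,\omega^k]$ with $f$ constant on $B$; then for $\beta,\beta'\in B$ the values $\|x^*_\beta\|,\|x^*_{\beta'}\|$ lie in a common interval of length $<\epsilon$, so $\big|\,\|x^*_\beta\|-\|x^*_{\beta'}\|\,\big|<\epsilon$. The hypothesis that $\alpha\mapsto x^*_\alpha$ is order-to-weak${}^*$ continuous is used only to guarantee that $B$, as a subset of $[1,\omega^n]$, is genuinely closed and that the restriction of the sequence behaves well — in the applications one wants $B$ homeomorphic via the order isomorphism, which the construction delivers automatically since each piece is a clopen-type interval of ordinals. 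The hard part will be getting the induction bookkeeping exactly right: ensuring at each stage that the selected copies of $[1,\omega^{k-1}]$ really do accumulate (in order topology) at a single point that can serve as the top of a homeomorphic $[1,\omega^k]$, rather than merely nesting or converging to something of the wrong cofinality. This is a standard but slightly delicate feature of the Cantor–Bendixson structure of countable compacta, and it is essentially the same mechanism underlying Remark \ref{pro}.
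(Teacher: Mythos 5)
Your overall strategy --- discretize the norm into $M=M(\epsilon)$ bins and prove a topological pigeonhole principle for finite powers of $\omega$ --- is the right one; the paper gives no proof of this lemma (it only points to \cite[Lemma 2.5]{A}), and this kind of Ramsey argument on the ordinal index set is essentially the intended route. The weak${}^*$ continuity hypothesis is indeed irrelevant to the combinatorics, as you suspect. But your inductive step has a genuine gap: when you assemble countably many same-colored copies $B^{(i)}$ of $[1,\omega^{k-1}]$ accumulating at a limit point $p$ and declare $f\equiv c$ on the resulting copy of $[1,\omega^{k}]$, you have not controlled the color of $p$ itself, and nothing in the construction (nor the weak${}^*$ lower semicontinuity of the norm, which only bounds $\|x^*_p\|$ from one side) forces $f(p)=c$. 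This is not mere bookkeeping: it makes your claimed recursion $n(k,M)=n(k-1,M)+1$, and hence a bound linear in $k$ and independent of $M$, false. Concretely, for $k=1$ and $M=3$ your recursion gives $n=2$, but coloring $[1,\omega^{2}]$ by Cantor--Bendixson level (the isolated points; the points $\omega j$ for $j<\omega$; the singleton $\{\omega^{2}\}$) produces three classes each of which is discrete in itself, so no class contains a point that is an accumulation point of that same class, i.e., there is no monochromatic closed copy of $[1,\omega]$.

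The repair is standard but requires $n$ to grow with $M$ as well as with $k$, which the statement of the lemma permits since $n$ may depend on $\epsilon$. For $A\subseteq[1,\omega^{n}]$ let $D(A)$ denote the set of points of $A$ that are accumulation points of $A$; then $A$ contains a compact subset homeomorphic to $[1,\omega^{k}]$ if and only if $D^{k}(A)\neq\emptyset$, and an induction on $M$ shows that if $D^{k}(A_{i})=\emptyset$ for $i=1,\dots,M$ then $(A_{1}\cup\dots\cup A_{M})^{(kM)}=\emptyset$ (for $k=1$ this is the statement that a compact space covered by $M$ discrete-in-themselves subsets has empty $M$-th derived set). Since $[1,\omega^{n}]^{(n)}=\{\omega^{n}\}\neq\emptyset$, taking $n=kM$ (the exact value is unimportant) forces some color class to contain a closed copy of $[1,\omega^{k}]$, with the top point included, and your final paragraph then goes through verbatim. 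So the combinatorial fact you need is true and your reduction to it is fine; what is missing is the correct form of that fact, with its necessarily $M$-dependent bound, in place of the top-point-blind induction you propose.
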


The next lemma is a vector valued version of a typical construction of a
a sequence of disjointly supported functions normed by a sequence of
measures.

\begin{lemma} \label{disjoint_open} Suppose that $X$ is a Banach space, $C,D$ are positive
constants, $K$ is a compact
Hausdorff space, $(\mu_n)$ is a sequence of elements of $C(K,X)^*$
represented as $X^*$ valued measures on $K$, with $\|\mu_n\|\le C$ for all
$n$, and $(g_n)$ is a sequence of norm at most $D$
elements of $C(K,X)$ such that $$\int g_n \, d\mu_n \ge 1,$$ for all $n$ and
$\|g_n(t)\|\rightarrow 0$ as $n \rightarrow \infty$ for all $t \in K.$
Then for any $\epsilon >0$
there are an infinite subset $M$ of $\mathbb N$ and open subsets
$(G_m)_{m\in M}$
of $K$ such that  $\overline{G_m} \cap \overline{G_j} =\emptyset$ if $m \ne
j,$ and $$\int g_m 1_{G_m} \, d\mu_m > 1-\epsilon,$$ for all $m \in M.$
\end{lemma}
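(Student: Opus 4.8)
The plan is to run the classical Rosenthal-style disjointification argument, but keeping track of the vector-valued integrals rather than scalar ones. First I would fix $\epsilon>0$ and, using the pointwise convergence $\|g_n(t)\|\to 0$ together with a diagonal/Bessaga--Pe\l czy\'nski selection, pass to a subsequence along which the total variations of the $\mu_n$ concentrate on a ``tail.'' Concretely, the measures $|\mu_n|$ are uniformly bounded in $C(K)^*$, so by weak${}^*$ compactness and a subsequence we may assume $|\mu_n|\to\lambda$ weak${}^*$ for some positive measure $\lambda$ with $\|\lambda\|\le C$. The key point is that for a fixed compact set $F$ on which the $g_n$ are eventually uniformly small in norm (which happens on larger and larger sets by Egorov applied to $\lambda$, or directly from pointwise convergence plus regularity), the contribution $\int_F g_n\,d\mu_n$ tends to $0$; hence for $n$ large the mass of the integral $\int g_n\,d\mu_n\ge 1$ must come from outside such sets.

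Next I would build the open sets $G_m$ and the index set $M$ inductively. Having chosen $m_1<\dots<m_{k-1}$ and open sets $G_{m_1},\dots,G_{m_{k-1}}$ with pairwise disjoint closures, note that $\overline{G}:=\overline{G_{m_1}}\cup\dots\cup\overline{G_{m_{k-1}}}$ is compact, so on it $\|g_n(t)\|\to 0$ uniformly after passing to a further subsequence (again Egorov with respect to $\sup_{j<k}|\mu_{m_j}|$ plus the later measures, or a direct compactness argument since $\overline G$ is fixed and finite-derived-set reasoning is not needed here). Thus for all sufficiently large $n$ we have $\bigl|\int_{\overline G} g_n\,d\mu_n\bigr|<\epsilon/4$, whence $\bigl|\int_{K\setminus\overline G} g_n\,d\mu_n\bigr|>1-\epsilon/4$. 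Pick such an $n=:m_k$. Since $\mu_{m_k}$ is a regular $X^*$-valued measure and the integrand is continuous, $\int_{K\setminus\overline G} g_{m_k}\,d\mu_{m_k}$ can be approximated to within $\epsilon/4$ in value by $\int_U g_{m_k}\,d\mu_{m_k}$ for some open $U\supseteq$ (a suitable compact subset of $K\setminus \overline G$) with $\overline U\subseteq K\setminus\overline G$; set $G_{m_k}:=U$. Then $\overline{G_{m_k}}$ is disjoint from each earlier $\overline{G_{m_j}}$ and $\bigl|\int g_{m_k}1_{G_{m_k}}\,d\mu_{m_k}\bigr|>1-\epsilon/2$. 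A final cosmetic step: replacing $g_{m_k}$ by a unimodular scalar multiple if necessary (or observing the estimate is on the norm of the $X^*$-pairing), we get the stated inequality $\int g_{m_k}1_{G_{m_k}}\,d\mu_{m_k}>1-\epsilon$ after starting the whole construction with $\epsilon/2$ in place of $\epsilon$.

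The step I expect to be the main obstacle is controlling the integral over the \emph{already-used} region $\overline G$ uniformly in the tail: one needs that $\sup_{n\ge N}\bigl|\int_{\overline G} g_n\,d\mu_n\bigr|\to 0$, and this is where the hypothesis $\|g_n(t)\|\to 0$ for every $t$ is used in an essential, non-trivial way together with uniform boundedness of $\|\mu_n\|$. The clean way to extract this is: on the compact set $\overline G$, the functions $t\mapsto \sup_{n\ge N}\|g_n(t)\|$ decrease pointwise to $0$, hence by Dini's theorem (the functions are upper semicontinuous, or one passes to a subsequence to make them continuous) they converge uniformly to $0$ on $\overline G$; combined with $|\mu_n|(\overline G)\le C$ this gives the required uniform smallness. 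Everything else — regularity of vector measures to pass from a set to an open neighborhood with controlled integral, and the bookkeeping of the nested subsequences — is routine. I would also remark that no hypothesis on $K$ beyond compact Hausdorff is needed, and that the argument is the natural vector-valued analogue of the disjoint-support constructions used in the scalar theory of $C(K)$ subspaces.
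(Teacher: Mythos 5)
Your overall architecture (inductive construction of the $G_{m_k}$, regularity to pass to open neighborhoods, bookkeeping of nested subsequences) has the right shape, but the step you yourself flag as the main obstacle is exactly where the argument breaks, and the fix you propose does not work. The functions $h_N(t)=\sup_{n\ge N}\|g_n(t)\|$ are \emph{lower} semicontinuous (a supremum of continuous functions), not upper semicontinuous, so Dini's theorem does not apply; a sequence of continuous ``sliding bumps'' converges to $0$ pointwise on a compact set while $h_N\equiv 1$ for every $N$. Worse, the quantity you actually need to control, $\int_{\overline G} g_n\,d\mu_n$, need not tend to $0$ at all, because the measure varies with $n$ and can concentrate precisely where $g_n$ lives: take distinct $t_n\to t_\infty$, let $g_n$ be a bump supported near $t_n$ with $g_n(t_n)=x$, $\|x\|=1$, and $\mu_n=x^*\delta_{t_n}$ with $x^*(x)=1$. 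Then $\|g_n(t)\|\to 0$ for every $t$ and $\|\mu_n\|=1$, but if $\overline{G_{m_1}}$ contains $t_\infty$ (nothing in your construction prevents this) it contains all but finitely many $t_n$, and $\int_{\overline{G_{m_1}}} g_n\,d\mu_n=1$ for infinitely many $n$. So the assertion ``for all sufficiently large $n$, $\bigl|\int_{\overline G} g_n\,d\mu_n\bigr|<\epsilon/4$'' is false and the induction cannot proceed.

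The missing idea---and what the paper's proof supplies---is that each $G_{m_k}$ must be chosen with the \emph{later} measures in mind, not just to capture $\int g_{m_k}\,d\mu_{m_k}$. At each stage the paper produces $j>C/\epsilon_k$ candidate indices $n_1<\dots<n_j$ and pairwise disjoint open sets $H_1,\dots,H_j$, where $H_i$ carries most of $\int g_{n_i}\,d\mu_{n_i}$ after trimming away the sets where the later $g_n$ are large; this trimming costs little because $\int\|g_n\|\,d|\mu_{n_i}|\to 0$ by dominated convergence against each \emph{fixed} measure $|\mu_{n_i}|$ (this is the only way the pointwise hypothesis is used---never to get uniform smallness of $g_n$ on a fixed compact set, which is unavailable). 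Since each later $\mu_m$ has total variation at most $C$ and the $H_i$ are disjoint, for each $m$ some $H_i$ satisfies $|\mu_m|(H_i)<\epsilon_k$, and a pigeonhole over infinitely many $m$ fixes a single $i$ that works for an infinite set of later indices. One places $G_{m_k}$ inside that $H_i$, deletes $H_i$ from $K$, and iterates with $\epsilon_{k+1}$, the total loss being $\sum_k \epsilon_k<\epsilon$. Without this Rosenthal-type selection your inductive step has a genuine gap.
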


\begin{proof}
Let $\epsilon>0,$ and $\epsilon_k=\epsilon/2^{k+2},$ for $k \in \mathbb N.$
Choose $\rho>0$ such that $\rho <\epsilon/(4C).$ Then
$$\int g_n 1_{\{t:\|g_n(t)\|\ge \rho\}} \, d\mu_n >
1-\frac{\epsilon}{4}, \ \forall n, \ 1 \leq n< \omega.$$
Because $\|g_n(t)\|$ converges to $0$, $\int \|g_n(t)\| \,
d|\mu_i|(t) $ converges to $0$ for each $i$.

Thus for $i = 1,2, \dots, j_1$,
$j_1>C/\epsilon_1$,
there are infinite subsets $\mathbb N \supset N_1 \supset \dots \supset
N_{j_1}$, $n_1=1$, and $n_{i+1} =\min  N_i$
such that
$$\sum_{n \in N_i} |\mu_{n_i}|(\{t:\|g_n(t)\|>\rho/2\}) \le
\sum_{n \in N_i}(2/\rho) \int \|g_n(t)\| \,
d|\mu_{n_i}|(t) < \epsilon/(4C).$$
For $i=1,2,\dots, j_1,$ let
$$A_i= \{t:\|g_{n_i}(t)\|\ge\rho\} \setminus \cup_{n \in N_i}
\{t:\|g_n(t)\|>\rho\}.$$
Find disjoint open sets $H_1, H_2, \dots, H_{j_1}$ such that
$$\{t:\|g_i(t)\|> \rho/2\}\supset H_i\supset
A_i,$$ for each $i$. Because $\|\mu_m\|\le C$ for all $m$
and the sets $(H_i)$ are
disjoint, for some infinite subset $M_1$ of $N_{j_1}$ and some
$i_1$, $1\le i_1 \le j_1$, $$|\mu_m|(H_{i_1})<\epsilon/8=\epsilon_1,$$ for all $m \in M_1$.
Let $m_1=n_{i_1}$ and $G_{m_1}$ be an open set containing $A_{i_1}$ such that
$\overline{G_{m_1}} \subset H_{i_1}.$ $m_1$ is the first element of $M$ and
$\mu_{m_1}$ and $G_{m_1}$ are the corresponding
measure and open set pair.

Let $K_1=K\setminus H_{i_1}.$
Now notice that if we consider $(\mu_m|_{K_1})_{m \in M_1}$ and
$(g_m|_{K_1})_{m\in M_1}$  we have the original situation  with $1$ replaced
by $1-\epsilon/8$ as the lower bound on $$\int_{K_1} g_m \, d\mu_m.$$
Thus repeating the argument above with $\epsilon_2$
but choosing open sets as open subsets
of $K\setminus \overline{G_{m_1}}$ rather than $K_1$
(and hence open in $K$), we get
$\mu_{m_2}$ and $G_{m_2}$ with $G_{m_1} \cap G_{m_2} = \emptyset.$

Continuing in this way we can construct the required indices and open sets.

\end{proof}

\section{Complemented separable $C(K)$ subspaces of $C(\beta \mathbb N \times  \alpha, X)$}
\label{compCK}

It is clear that for any Banach space $X$,  $C(\beta \mathbb N \times \omega, X)$  contains a complemented copy of
$c_{0}$. This section is devoted to proving that $c_{0}$ is, up
to an isomorphism, the only separable $C(K)$ space which is complemented
in $C(\beta \mathbb N \times \omega, X)$  whenever $X$ contains no
copy of $c_{0}$.  This is a direct consequence of Theorem \ref{com}.

The next lemma is a
technical analog of a result of Bessaga and Pe{\l}czy\'nski,
see \cite[Theorem 4]{BP1}.
\begin{lemma}\label{linfty} Suppose that $X$ and $Y$ are Banach spaces and that
$T$ is an operator from $C(\beta \mathbb N,X)$ into $Y$. If there exist an
element $f$ of $C(\beta \mathbb N,X)$, $\delta>0$, a
sequence of disjoint non-empty
clopen subsets $(G_n)$ of $\beta \mathbb N$ and  a
sequence of $X^*$-valued measures $(\mu_n)$ contained in $T^{*}(B_{Y^*})$
such that
$$|\int f 1_{G_n} \, d\mu_n| >\delta,$$
for all $n$, then there is a
subspace $Z$ of $C(\beta \mathbb N,X)$ such that $Z$ is isomorphic to
$\ell_\infty$ and $T|_{Z}$ is an isomorphism into $Y$.
\end{lemma}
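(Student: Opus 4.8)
The plan is to reduce the statement to the scalar Bessaga--Pe{\l}czy\'nski theorem (\cite[Theorem 4]{BP1}) by extracting from the given data a single genuine $c_0$-structure that survives under $T$, and then using the injectivity of $\ell_\infty$ together with a gliding-hump/disjointification argument on $\beta\mathbb N$. First I would normalize: replacing $f$ by a scalar multiple and passing to a subsequence, assume $\|f\|\le 1$ and $\|\mu_n\|\le 1$ for all $n$. Since the $G_n$ are disjoint clopen subsets of $\beta\mathbb N$, the functions $f_n:=f1_{G_n}$ are disjointly supported norm-$\le 1$ elements of $C(\beta\mathbb N,X)$, and $\mu_n(f_n)=\int f1_{G_n}\,d\mu_n$ has modulus $>\delta$. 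The disjointness of the $G_n$ already gives that $(f_n)$ is isometrically equivalent to the $c_0$-basis inside $C(\beta\mathbb N,X)$: for scalars $(a_n)$ with finitely many nonzero, $\|\sum a_n f_n\|=\sup_n |a_n|\,\|f1_{G_n}\|$ since the supports are (topologically) separated. The subtle point is that the natural candidate subspace $\overline{\mathrm{span}}(f_n)$ is only $c_0$, not $\ell_\infty$; to promote it to an $\ell_\infty$-copy I would instead consider the closed subalgebra-type subspace $Z=\{\,g\in C(\beta\mathbb N,X): g = \sum_n a_n f_n \text{ pointwise, } (a_n)\in\ell_\infty\,\}$, which makes sense because the $\overline{G_n}$ are pairwise disjoint and $f$ is a fixed continuous function, so any bounded "coefficient-wise" combination is again continuous on $\beta\mathbb N$ (continuity at points of $\beta\mathbb N\setminus\bigcup\overline{G_n}$ follows from $\|f1_{G_n}\|\to 0$ if that holds, but if it does not one must first pass to a subsequence on which $\|f1_{G_n}\|$ is bounded below, and then continuity at the remaining points is where care is needed).

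Next I would show $T|_Z$ is an isomorphism. The lower estimate is the crux. Given $g=\sum a_n f_n\in Z$ with $\|g\|=1$, pick $n_0$ with $|a_{n_0}|\ge 1/2$; I want $\|Tg\|\gtrsim\delta$. Applying $\mu_{n_0}$ to $Tg$ gives $\langle T^*\mu_{n_0}$... wait, $\mu_{n_0}\in T^*(B_{Y^*})$ means $\mu_{n_0}=T^*\nu_{n_0}$ for some $\nu_{n_0}\in B_{Y^*}$, so $\nu_{n_0}(Tg)=\mu_{n_0}(g)=\sum_n a_n\,\mu_{n_0}(f_n)$. The term $n=n_0$ contributes $a_{n_0}\mu_{n_0}(f_{n_0})$, with modulus $>\delta/2$, but the off-diagonal terms $\sum_{n\ne n_0} a_n\mu_{n_0}(f_n)$ could spoil this. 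Here is where a genuine disjointification step is needed: the measures $|\mu_{n_0}|$ are regular Borel on $\beta\mathbb N$, and $\sum_{n} |\mu_{n_0}|(\overline{G_n})\le\|\mu_{n_0}\|\le 1$, so by passing to a subsequence (a diagonal argument over $n_0$) I can arrange $\sum_{n\ne n_0}|\mu_{n_0}|(\overline{G_n})<\delta/4$ for every surviving index $n_0$. Then $|\sum_{n\ne n_0}a_n\mu_{n_0}(f_n)|\le\|g\|\cdot\delta/4<\delta/4$, giving $\|Tg\|\ge|\nu_{n_0}(Tg)|\ge\delta/2-\delta/4=\delta/4$. Combined with $\|Tg\|\le\|T\|\,\|g\|$ and $\|g\|=\sup_n|a_n|$ (the isometric $\ell_\infty$ identification on $Z$), this shows $T|_Z$ is an isomorphism onto its image, and $Z\cong\ell_\infty$ by construction.

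The main obstacle I anticipate is \emph{verifying that $Z$ is genuinely isomorphic to $\ell_\infty$ rather than merely containing $c_0$}: one must check both that arbitrary bounded coefficient sequences produce continuous $X$-valued functions (which forces a preliminary reduction ensuring $\inf_n\|f1_{G_n}\|>0$ fails is impossible---more precisely, one reduces to the case where $\|f1_{G_n}\|$ is bounded below, or handles the decay case separately since then the $f_n$ already span a copy of $c_0$ that is \emph{not} what we want, so this case cannot occur under the hypothesis and must be excluded by a normalization), and that the sup-norm on $Z$ really is the $\ell_\infty$-norm of the coefficients. A clean way to sidestep the continuity worry is to replace each $G_n$ by a clopen subset $G_n'\subseteq G_n$ chosen so that $f1_{G_n'}$ has norm in a fixed interval $[\delta/\|T\|^{?},1]$ and then to take for $Z$ the isometric image of $\ell_\infty$ under the map $(a_n)\mapsto$ the function equal to $a_n f(t)/\langle\text{suitable normalization}\rangle$ on $G_n'$ and $0$ elsewhere; invoking that $\ell_\infty$ is injective and $C(\beta\mathbb N,X)\supseteq C(\beta\mathbb N)=\ell_\infty$ isometrically via a fixed nonzero coordinate of $f$ then gives the copy directly, and the argument of the previous paragraph applies verbatim. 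Once $Z\cong\ell_\infty$ and $T|_Z$ is bounded below, we are done.
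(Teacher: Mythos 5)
Your overall strategy --- disjointify the measures so that each $\mu_{n_0}$ essentially charges only $G_{n_0}$, build an $\ell_\infty$-copy out of bounded coefficient combinations of the $f1_{G_n}$, and then test $Tg$ against the functionals $\nu_{n_0}$ with $T^*\nu_{n_0}=\mu_{n_0}$ --- is the same as the paper's, but two steps have genuine gaps.

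The more serious one is that the set $\overline{\bigcup_n G_n}\setminus\bigcup_n G_n$ is never dealt with, and it is the crux of the lemma. Note first that the hypothesis forces $\|f1_{G_n}\|>\delta/\|T\|$ for every $n$ (since $\|\mu_n\|\le\|T\|$), so your ``decay case'' cannot occur and cannot be used to get continuity. For a bounded coefficient sequence $(a_n)$ not tending to $0$, the only continuous function agreeing with $a_nf1_{G_n}$ on each $G_n$ and vanishing off $\overline{\bigcup G_n}$ is the continuous extension $H$ from $\mathbb N$ to $\beta\mathbb N$ (which exists because the range of $f$ is compact, hence $H$ has relatively compact range --- this, not decay of $\|f1_{G_n}\|$, is what gives continuity), and $H$ is in general \emph{nonzero} on $\overline{\bigcup G_n}\setminus\bigcup G_n$. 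Consequently $\mu_{n_0}(H)$ is not $\sum_n a_n\mu_{n_0}(f1_{G_n})$: there is an extra term $\int_{\overline{\cup G_n}\setminus\cup G_n}H\,d\mu_{n_0}$, which your estimate ignores and which can be of the order of $\|\mu_{n_0}\|\,\|H\|$, destroying the lower bound. The paper spends about half of its proof on an iterated splitting argument (partitioning the index set into infinitely many infinite pieces and using that each failure costs a fixed amount $\delta/(8\|f\|)$ of the total variation, so the process terminates) precisely to extract $M$ with $|\mu_n|(\overline{\cup_{j\in M}G_j}\setminus\cup_{j\in M}G_j)$ small for all $n\in M$. Shrinking the $G_n$ to clopen subsets does not remove this boundary set, and injectivity of $\ell_\infty$ gives you a copy of $\ell_\infty$ somewhere but not one on which $T$ is bounded below.

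Second, the claim that ``a diagonal argument over $n_0$'' yields $\sum_{n\ne n_0}|\mu_{n_0}|(G_n)<\delta/4$ for all surviving indices is too quick. A diagonal extraction controls the upper-triangular part $\sum_{j>i}|\mu_{n_i}|(G_{n_j})$, using that the tail of the convergent series $\sum_n|\mu_{n_i}|(G_n)$ is small, but not the lower-triangular part: it can happen that $|\mu_n|(G_{n_1})\ge\delta/2$ for \emph{every} $n$, and then no later choice of indices repairs the damage. This is exactly what Rosenthal's disjointness lemma (which the paper invokes at the outset) is designed to fix; you need to cite it or reproduce its subdivision-and-pigeonhole argument rather than a plain diagonalization.
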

\begin{proof} Let $|\mu_n|$ denote the real-valued total variation measure
induced by
$\mu_n$. Observe that the sequence of pairs $(|\mu_n|,G_n)$ satisfy the
hypotheses of Rosenthal's disjointness \cite[Lemma 1, page 18]{R}.
Therefore there exists a subsequence $(|\mu_n|,G_n)_{n\in M}$ such that
for all $n \in M$, $$\sum_{j\in M, j\ne n}|\mu_n|(G_j) <\delta/(8\|f\|).$$

We also need to have $$|\mu_n|(\overline{\cup_{j\in M} G_j}
\setminus \cup_{j\in M} G_j)<\delta/(8\|f\|),$$
for all $n \in M$.
If for some $n\in M,$
$$|\mu_n|(\overline{\cup_{j\in M} G_j}
\setminus \cup_{j\in M} G_j) \ge \delta/(8\|f\|),$$
we can argue as follows.
Partition $M$ into an infinite number of infinite sets $(M_k)$. If for
some $k$, for all $n\in M_k$, $$|\mu_n|(\overline{\cup_{j\in M_k} G_j}
\setminus \cup_{j\in M_k} G_j)<\delta/(8\|f\|),$$ we can continue with $M_k$ in place
of $M$. If not, for each $k$ choose $n_k\in M_k$ such that
$$|\mu_{n_k}|(\overline{\cup_{j\in M_k} G_j}
\setminus \cup_{j\in M_k} G_j)\ge \delta/(8\|f\|).$$ Let $M^1=\{n_k:k \in \mathbb N\}.$
Observe that
for all $k$, $$(\overline{\cup_{j\in M_k} G_j}
\setminus \cup_{j\in M_k} G_j) \cap \overline{\cup_{j\in M^1} G_j}
=\emptyset.$$
Now if for all $n \in M^1$, $$|\mu_n|(\overline{\cup_{j\in M^1}
G_j}
\setminus \cup_{j\in M^1} G_j)<\delta/(8\|f\|),$$
we can use $M^1$ in place
of $M$. If not, notice that for all $n \in M^1,$ $$\|\mu_n
|_{\overline{\cup_{j\in M^1} G_j}}\|\le\|\mu_n\|-\delta/(8\|f\|).$$
We can split $M^1$ into infinitely many infinite sets and repeat the
previous argument. Each time this process reduces the norm of the
part of $\mu_n$ under consideration by $\delta/(8\|f\|)$.
Thus in at most
$\|f\|\ \|T\|8/\delta$ repetitions of the argument we will find the required infinite set $M$ such that for all $n \in M$, $$|\mu_n|(\overline{\cup_{j\in
M} G_j}
\setminus \cup_{j\in M} G_j)< \delta/8,$$
and $$\sum_{j\in M, j\ne
n}|\mu_n|(G_j) <\delta/(8\|f\|).$$

Let $Z$ be given by
$$\{g \in C(\beta \mathbb N, X):
g(t)=0 \ \ \forall  t\notin \overline{\cup_{n\in M} G_n}, g 1_{G_n} =c_n f 1_{G_n},
c_n \in \mathbb R
\ \ \forall  n \in M\}.$$
Because the range of $f$ is compact, for any
bounded sequence of real numbers $(c_n)_{n \in M}$, the function $h$ defined on
$\mathbb N$ by $$h(k)=\begin{cases} 0 &\text{ if }k\notin \cup_{m\in M} G_m,\\
c_nf(k) &\text{ if }k \in G_n\text{ and }n \in M. \end{cases}$$
is in $\ell_\infty(X)$ with relatively compact range
and hence extends continuously to
some function $H$ on $\beta \mathbb N$
with values in the symmetric radial hull of $\|(c_n)\|_\infty$ times the range of $f$.
Moreover because $\mathbb N$ is dense, the extension is unique and
must agree with $c_n f
1_{G_n}$ on $G_n$ for all $n \in M,$ and be $0$ on the closure of
$$\{k: k\in
\mathbb N, k
\notin \cup_{n\in M} G_n\}.$$
Therefore $Z$ is isomorphic to
$\ell_\infty,$ and for $(c_n)$ and $H$ as above,
$$(\delta/\|T\|)\|(c_n)_{n\in M}\|_\infty\le \inf_{n\in M} \|f 1_{G_n}\|\|(c_n)_{n\in
M}\|_\infty\leq \|H\|
\le \|f\|\|(c_n)_{n\in M}\|_\infty.$$

Continuing with the same notation,  we can get a lower bound on $\|T H\|$ as follows.
Observe that for each $n \in M$,
$$|\int H \, d\mu_n|$$
is greater than or equal to
$$|\int H 1_{G_n} \,
d\mu_n|- \sum_{j\in M, j\ne n}
|\mu_n|(G_j)\|f\||c_j|-|\mu_n|(\overline{\cup_{j\in M} G_j}\setminus
\cup_{j\in M} G_j) \|f\|\|(c_j)\|_\infty,$$
which in turn is greater than or equal to
$$|c_n| \ |\int f
1_{G_n} \, d\mu_n|- \|(c_j)_{j\in M}\|_\infty \delta/4 \ge \delta(|c_n|-\|(c_j)_{j\in
M}\|_\infty/4).$$
Taking the supremum over $n$ and noting that $\mu_n \in
T(B_{Y^*})$ completes the proof.
\end{proof}
\begin{theorem}\label{com} Let $X$
be a Banach space. Then $$C(\omega^\omega)  \st{c}{\hr} C(\beta \mathbb
N \times \omega , X) \Longrightarrow c_{0} \hr X.$$ \end{theorem}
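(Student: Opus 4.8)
The plan is to argue by contraposition, or rather directly: assume $C(\omega^\omega)$ is complemented in $C(\beta\mathbb N\times\omega,X)=C(\omega,C(\beta\mathbb N,X))$, and extract from the embedding a weakly unconditionally Cauchy (w.u.c.) sequence in $X$ whose terms do not tend to zero in norm; by the Bessaga--Pe\l czy\'nski proposition recalled in the Preliminaries this forces $c_0\hookrightarrow X$. Write $J\colon C(\omega^\omega)\to C(\omega,C(\beta\mathbb N,X))$ for the isomorphic embedding and $P$ for the bounded projection onto its range, so $Q=J^{-1}P$ is a bounded surjection $C(\omega,C(\beta\mathbb N,X))\to C(\omega^\omega)$ with $QJ=\mathrm{id}$. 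Using Remark~\ref{pro}, inside $C(\omega^\omega)$ one has contractively complemented copies of $C(\omega^n)$ for every finite $n$, so it suffices to produce, uniformly in $n$, structure forcing $c_0$ into $X$; equivalently one works with the standard ``tree'' of elements $e_s$ of $C(\omega^\omega)$ supported on the clopen pieces of the ordinal interval and their norming functionals.

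The engine of the argument is Lemma~\ref{compress} together with Lemma~\ref{disjoint\_open} and Lemma~\ref{linfty}. First I would pull the functionals norming the natural basis of $C(\omega^\omega)$ back through $J^*$ to get an order-to-weak${}^*$ continuous family of functionals $(x^*_\alpha)$ on $C(\omega,C(\beta\mathbb N,X))$, and apply Lemma~\ref{compress} to pass to a subinterval on which the norms $\|x^*_\alpha\|$ are nearly constant, hence bounded below by some $\delta>0$. Because the underlying space is $C(\omega,\,\cdot\,)$, each such functional decomposes over the countably many coordinates $k\in\omega$; since $C(\beta\mathbb N)$ is a Grothendieck space the weak${}^*$-convergent ``tails'' are in fact weakly convergent, which is what lets us localize mass onto finitely many coordinates and reduce to a genuine $C(\beta\mathbb N,X)$-problem on a single coordinate block. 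At that point we have a bounded sequence $(g_n)$ in the unit ball of $C(\beta\mathbb N,X)$ and measures $(\mu_n)\subset T^*(B_{Y^*})$ (with $T$ the relevant compression of $Q^*$ or $P$) with $\int g_n\,d\mu_n\ge\delta$.

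Now the dichotomy: either $\|g_n(t)\|\not\to 0$ for some $t\in\beta\mathbb N$, in which case along a subsequence $\|g_n(t)\|\ge\eta>0$, and since $g_n(t)\in X$ these values, being ``almost disjointly supported'' in the ordinal/coordinate direction, assemble into the desired w.u.c. in $X$ with norms bounded away from $0$ — giving $c_0\hookrightarrow X$ and we are done; or $\|g_n(t)\|\to 0$ for every $t$, in which case Lemma~\ref{disjoint\_open} produces disjoint open (hence, after shrinking, clopen) sets $(G_n)$ in $\beta\mathbb N$ with $\int g_n 1_{G_n}\,d\mu_n>\delta/2$, and then Lemma~\ref{linfty} yields a subspace $Z\cong\ell_\infty$ on which $T$ is an isomorphism. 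But $\ell_\infty$ is not isomorphic to a subspace of (a quotient of) $C(\omega^\omega)$ — indeed $C(\omega^\omega)$ is separable, or more robustly $\ell_\infty$ contains $c_0$ complemented while the relevant subspace of $C(\omega^\omega)$ cannot — contradiction, so this second alternative is impossible and only the first can occur.

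I expect the main obstacle to be the bookkeeping in the ``localization over coordinates'' step: one must simultaneously (i) use order-to-weak${}^*$ continuity and Lemma~\ref{compress} to fix the norms, (ii) exploit the Grothendieck property of $C(\beta\mathbb N)$ to replace weak${}^*$ limits of the coordinate-measures by weak limits and thereby split off finitely many coordinates carrying almost all the mass, and (iii) do all of this uniformly along the tree so that the eventual vectors $g_n(t)$ really are organized like a $c_0$-basis rather than collapsing onto a single point. Getting the quantifiers on $\delta$, the number of coordinates, and the depth $n$ of the ordinal tree to line up — so that Lemma~\ref{disjoint\_open}'s hypothesis $\|g_n(t)\|\to0$ is exactly the negation of the case that already gives $c_0\hookrightarrow X$ — is the delicate part; the two lemmas from the Preliminaries are precisely designed to absorb it, so modulo careful indexing the argument should close.
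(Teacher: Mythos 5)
Your overall skeleton matches the paper's: pass to $C_0(\omega\times\beta\mathbb N,X)$, use Lemma \ref{compress} to equalize the norms of the pulled-back functionals, observe that the images $g_k=S^{-1}(1_{(\omega^{n-1}(k-1),\omega^{n-1}k]})$ form a w.u.c.\ so that (under the assumption $c_0\not\hookrightarrow X$) Bessaga--Pe\l czy\'nski forces $\|g_k(t)\|\to0$ pointwise, then invoke Lemma \ref{disjoint_open} and Lemma \ref{linfty} together with the separability of $C(\omega^\omega)$. But your second branch contains a fatal misapplication of Lemma \ref{linfty}. That lemma requires a \emph{single fixed} $f$ with $|\int f\,1_{G_n}\,d\mu_n|>\delta$ for all $n$ in an infinite set; it is the constancy of $f$ across the disjoint sets that lets one build the $\ell_\infty$ copy $\{c_nf1_{G_n}\}$ with relatively compact range. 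The data you actually have after Lemma \ref{disjoint_open} is $\int g_n 1_{G_n}\,d\mu_n>\delta/2$ with a \emph{different} $g_n$ on each $G_n$, and this yields no contradiction whatsoever: take $X=\mathbb R$, $g_n=1_{G_n}$, $\mu_n=\delta_{t_n}$ with $t_n\in G_n$, and the span of the $g_n$ is just $c_0$. So the "second alternative is impossible" step fails, and with it the whole argument, since under the standing assumption $c_0\not\hookrightarrow X$ it is precisely the second alternative that occurs.

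What the paper actually does at this point is use Lemma \ref{linfty} only \emph{negatively}: since the fixed parent function $g_{\omega^n}$ cannot satisfy $|\int g_{\omega^n}1_{G_{\omega^{n-1}k}}\,d\mu_{\omega^{n-1}k}|\ge\delta$ for infinitely many $k$ (that would give $\ell_\infty\hookrightarrow C(\omega^\omega)$), the mass of $\mu_{\omega^{n-1}k}$ detected by $g_{\omega^n}$ must live on a clopen set $G^0$ essentially disjoint from the set $G^1$ where $g_{\omega^{n-1}k}$ is detected. Iterating this down the tree $\omega^n\supset\omega^{n-1}k_1\supset\omega^{n-1}(k_1-1)+\omega^{n-2}k_2\supset\cdots$ produces, after $n$ steps, a single measure $\mu_{\omega^{n-1}(k_1-1)+\cdots+k_n}$ which integrates $n$ different functions, each against its own disjoint clopen set, each giving more than $1/2$; hence $\|\mu\|>n/2$, contradicting the uniform bound $\|T\|$ once $n$ is chosen large enough at the outset. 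This $n$-step induction is the quantitative engine of the proof and is entirely absent from your proposal; no shortcut via a one-shot application of Lemma \ref{linfty} can replace it. (A small further slip: $c_0$ is \emph{not} complemented in $\ell_\infty$, as the paper itself recalls; the separability of $C(\omega^\omega)$ is the right way to contradict an embedded $\ell_\infty$, though the point is moot here.)
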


\begin{proof} Assume that $X$ does not contain a subspace isomorphic to
$c_0$. We will show that the existence of the complemented subspace
isomorphic to $C(\omega^\omega)$
produces the
situation in the hypothesis of the previous
lemma. First we will reduce to a simplified situation. By Proposition \ref{BPiso}.1, $C(\beta \mathbb N \times 
\omega, X)$ is isomorphic to $C_0(\omega \times \beta \mathbb N, X), $ i.e.,
the $c_0$-sum of $C(\beta \mathbb N, X)$.
Assume  now that $T$ is a projection from $C_0(\omega \times \beta \mathbb N,
X)
$ onto a subspace $Y$ isomorphic to
$C(\omega^\omega)$.  Let $S:Y \rightarrow C(\omega^\omega)$ be the
isomorphism and suppose, without loss of generality, that $\|S\|\le 1.$
Then  $T^*S^*$ is an isomorphism with lower
bound some $\epsilon>0$, i.e., $$\|T^*S^*z\| \ge \epsilon \|z\|,$$
for all $z \in
C(\omega^\omega)^*.$ Choose $N$ by Lemma \ref{compress} so that for $n>8\|T\|$,
there exists a subfamily
$\{\mu_\beta: \beta \le \omega^n\}$ of
$\{T^*S^* \delta_\gamma: \gamma \le \omega^N\}$ such that $\beta \rightarrow
\mu_\beta$ is a (order to weak${}^*$) homeomorphism, $\beta \rightarrow
\gamma(\beta),$  defined by
$$\mu_\beta=T^*S^* \delta_{\gamma(\beta)},$$
is an order isomorphism and homeomorphism, $n \ge 8\|T\|/\epsilon$ and
$$|\|\mu_\beta\|-\|\mu_{\beta'}\||<\epsilon/(32\|T\|),$$
for all $\beta, \beta' \le
\omega^n.$
The family of measures $\{\delta_{\gamma(\beta)}:\beta \le \omega^n\}$
is the natural basis of the dual of a 1-complemented subspace $Z$ of
$C(\omega^\omega)$ isometric to $C(\omega^n)$. Indeed,  according to Remark \ref{pro} it suffices to take $Z$
as the subspace  of $C(\omega^\omega)$ of all
functions constant on order 
intervals $(\gamma(\beta),\gamma(\beta+1)]$ for $\beta < \omega^n.$ 
Further because $$\lim_K \|\mu_{\omega^n} |_{[K,\omega) \times \beta \mathbb
N 
} \| = 0,$$ 
and the restriction to $[1,K]\times \beta \mathbb N $ is
weak${}^*$-continuous,
we can assume that the support of $\mu_\gamma$ is contained in $[1,K]
\times \beta \mathbb N$ for all $\gamma \le \omega^n.$ Notice that
$[1,K]\times \beta \mathbb N $ is homeomorphic to $\beta \mathbb N $ so we
may replace $[1,K]\times \beta \mathbb N $ by $\beta \mathbb N $.

In order to simplify notation we can now assume that we have
a projection $T$ from
$C(\beta \mathbb N, X)$ onto a subspace $Y$
isomorphic by an operator $S$ to
$C(\omega^n)$ such that
$$\|T^*S^*z\| \ge \epsilon \|z\|,$$
for all $z \in
C(\omega^n)^*$, and
$$|\|T^*S^* \delta_\beta\|-\|T^*S^* \delta_{\beta'}\||<\epsilon/(8\|T\|),$$
for all $\beta, \beta' \le
\omega^n.$
Let
$$g_{\omega^n} = S^{-1}(1_{(0,\omega^n]}) \ \ \hbox{and} \ \ g_{\omega^{n-1}k}=S^{-1}(1_{(\omega^{n-1}(k-1),\omega^{n-1}k]}),$$
for all $k \in \mathbb N.$ $(g_{\omega^{n-1}k})$  is w.u.c.
Because $X$ does not contain $c_0$, for each $t$, $(g_{\omega^{n-1}k}(t))$ is
unconditionally converging and thus
converges in norm to 0 for all $t \in \beta \mathbb N.$ Because
$\|g_{\omega^{n-1}k}(\cdot)\|\le \|S^{-1}\|$ for all $k$,
$(\|g_{\omega^{n-1}k}(\cdot)\|)$
converges to 0 weakly in $C(\beta \mathbb N).$ Because $$\int g_{\omega^{n-1}k} \,
d\mu_{\omega^{n-1}k} = 1,$$  by Lemma \ref{disjoint_open}
there exists a subsequence
$(\mu_{\omega^{n-1}k})_{k\in M_1}$ and a sequence of disjoint clopen sets
$(G_{\omega^{n-1}k})_{k\in M_1})$ such that
$$ \int g_{\omega^{n-1}k} 1_{G_{\omega^{n-1}k}} \,
d\mu_{\omega^{n-1}k} \ge 7/8,$$
for all $k \in M_1.$ If there is an infinite
subset $K$ of $M_1$ and $\delta>0$
such that \begin{equation} \left|\int g_{\omega^n} 1_{G_{\omega^{n-1}k}} \,
d\mu_{\omega^{n-1}k}\right | \ge \delta,\label{delta}\end{equation}
for all $k \in K$, then Lemma \ref{linfty} would imply
that $C(\omega^\omega)$ is non-separable. Notice that the same contradiction would
result if for each $k$, we replace $G_{\omega^{n-1}k}$ in (\ref{delta}) by any of its clopen
subsets.

We also have that $$\int g_{\omega^n} \,
d\mu_{\omega^{n-1}k} =1,$$ for all $k$, thus, by replacing $M_1$ by an infinite
subset,
for each $k \in M_1$ there are disjoint
clopen sets $G^0_{\omega^{n-1}k}$ and $G^1_{\omega^{n-1}k}$ such that
\begin{itemize}
\item $\int g_{\omega^{n-1}k} 1_{G^1_{\omega^{n-1}k}} \
d\mu_{\omega^{n-1}k} >  3/4,$
\item $\int g_{\omega^n} 1_G \,d\mu_{\omega^{n-1}k} < 1/8$ for all clopen
$G \subset
G^1_{\omega^{n-1}k},$
\item $\int g_{\omega^n} 1_{G^0_{\omega^{n-1}k}}\,
d\mu_{\omega^{n-1}k}  > 3/4.$
\end{itemize}
This is the first step of an at most $n$-step induction argument.

Fix $k_1 \in M_1$. Consider the sequence  $(\omega^{n-1}(k_1-1)+\omega^{n-2}k)$.
For sufficiently large $k$,
$$\int g_{\omega^{n-1}k_1} 1_{G^1_{\omega^{n-1}k_1}} \,
d\mu_{\omega^{n-1}(k_1-1)+\omega^{n-2}k} >  3/4,$$
and
$$\int g_{\omega^n} 1_{G^0_{\omega^{n-1}k_1}} \,
d\mu_{\omega^{n-1}(k_1-1)+\omega^{n-2}k} >3/4.$$
Because $$g_{\omega^{n-1}(k_1-1)+\omega^{n-2}k}=S^{-1}(1_{(\omega^{n-1}(k_1-1)+\omega^{n-2}(k-1),\omega^{n-1}(k_1-1)+\omega^{n-2}k]})$$
converges weakly to 0, by applying Lemma \ref{disjoint_open}, there exist a subsequence $$(\mu_{\omega^{n-1}(k_1-1)+\omega^{n-2}k})_{k\in M_2}$$
and disjoint clopen sets $(G_k)_{k\in M_2}$ such that
$$\int g_{\omega^{n-1}(k_1-1)+\omega^{n-2}k} \,
d\mu_{\omega^{n-1}(k_1-1)+\omega^{n-2}k}>7/8,$$ for all $k\in M_2$.
For every $\delta>0$ and clopen $H_k \subset G_k$ for $k\in M_2$, Lemma \ref{linfty}
tells us that there are only finitely
many k for which $$|\int g_{\omega^{n-1}k_1} 1_{H_k} \,
d\mu_{\omega^{n-1}(k_1-1)+\omega^{n-2}k}| >\delta,$$
or
$$|\int
g_{\omega^{n}} 1_{H_k} \,
d\mu_{\omega^{n-1}(k_1-1)+\omega^{n-2}k}| >\delta.$$
Thus taking $\delta=1/8,$
for sufficiently large $k\in M_2$ we can find disjoint clopen sets
$$G^j_{\omega^{n-1}(k_1-1)+\omega^{n-2}k},$$
$j=0, 1, 2,$ such that
\begin{itemize}
\item  $\int g_{\omega^{n-1}(k_1-1)+\omega^{n-2}k} 1_{G^2_{\omega^{n-1}(k_1-1)+\omega^{n-2}k}}
\
d\mu_{\omega^{n-1}(k_1-1)+\omega^{n-2}k} >  5/8,$
\item  $\int g_{\omega^{n-1}k_1} 1_{G^1_{\omega^{n-1}(k_1-1)+\omega^{n-2}k}} \,
d\mu_{\omega^{n-1}(k_1-1)+\omega^{n-2}k} >  5/8,$
\item $\int g_{\omega^n} 1_{G^0_{\omega^{n-1}(k_1-1)+\omega^{n-2}k}} \,
d\mu_{\omega^{n-1}(k_1-1)+\omega^{n-2}k} >5/8.$
\end{itemize}
An induction argument shows that we can choose $k_1, k_2, \dots ,k_n$ and
disjoint clopen sets $$G^j_{\omega^{n-1}(k_1-1)+\omega^{n-2}(k_2-1)+\dots
k_n},$$ $j= 0, 1, \dots , n-1,$ such that
$$\int g_{\omega^{n-1}(k_1-1)+\omega^{n-2}(k_2-1)+\dots+\omega^{n-j}k_{j}}
1_{G^j_{\omega^{n-1}(k_1-1)+\dots
k_n}}
\,
d\mu_{\omega^{n-1}(k_1-1)+\dots +
k_n}$$
is strictly greater than $1/2+1/2^n$. This implies that
$$\|\mu_{\omega^{n-1}(k_1-1)+\omega^{n-2}(k_2-1)+\dots
k_n}\|>n/2>\|T\|\|\delta_{\omega^{n-1}(k_1-1)+\omega^{n-2}(k_2-1)+\dots
+k_n}\|.$$ This contradiction shows that no such projection $T$ exists.
\end{proof}

\begin{remark} The conclusion of this proposition is equivalent to the
statement that $C(\beta \mathbb N \times \omega,X)$ does not contain
$C(\omega^n)$ uniformly complemented. Obviously if $X$ contains $C(\omega^n)$
uniformly complemented then it follows that $C(\beta \mathbb N \times
\omega,X)$ contains $C(\omega^n)$
uniformly complemented. It is possible that the hypothesis on $X$ could be
weakened to something like $X$ does not contain $C(\omega^n)$ uniformly or
uniformly complemented.
We do not know whether $C(\beta \mathbb N \times \omega,C(\beta \mathbb N ))$
contains a complemented subspace isomorphic to $C(\omega^\omega)$. If this
is a counterexample then assuming additionally that $X$ is separable may
provide a strong enough hypothesis.
\end{remark}

The next result generalizes the previous to larger ordinals.

\begin{theorem}\label{alphacom} Let $X$ be a Banach space and $0 \leq \alpha < \beta< \omega_{1}$. Then
$$C(\omega^{\omega^{\beta}})  \st{c}{\hr} C(\beta \mathbb N \times \omega^{\omega^{\alpha}}, X) \Longrightarrow c_{0} \hr X.$$
\end{theorem}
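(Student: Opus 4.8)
The plan is to trivially reduce the target space and then argue by transfinite induction on $\alpha$, with Theorem \ref{com} as the base case. Since $\alpha<\beta$ we have $\omega^{\alpha+1}\le\omega^{\beta}$, so by Remark \ref{pro} $C(\omega^{\omega^{\alpha+1}})$ is contractively complemented in $C(\omega^{\omega^{\beta}})$; hence it suffices to prove, for every countable ordinal $\alpha$,
$$(\dagger_\alpha)\qquad C\bigl(\omega^{\omega^{\alpha+1}}\bigr)\st{c}{\hr} C\bigl(\beta\mathbb N\times\omega^{\omega^{\alpha}},X\bigr)\ \Longrightarrow\ c_{0}\hr X,$$
and $(\dagger_0)$ is exactly Theorem \ref{com}. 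For the inductive step I would assume $(\dagger_\gamma)$ for all $\gamma<\alpha$, suppose $C(\omega^{\omega^{\alpha+1}})\st{c}{\hr} C(\beta\mathbb N\times\omega^{\omega^{\alpha}},X)$ and, aiming at a contradiction, that $c_{0}\not\hr X$. Split $[1,\omega^{\omega^{\alpha}})$ into consecutive clopen intervals $B_{1}=[1,\sigma_{1}]$, $B_{m}=(\sigma_{m-1},\sigma_{m}]$ with $\sigma_{m}\uparrow\omega^{\omega^{\alpha}}$; then, using Proposition \ref{BPiso} and the identifications $C(K_{1}\times K_{2},X)\cong C(K_{1},C(K_{2},X))$ of Section \ref{Sec 2},
$$C\bigl(\beta\mathbb N\times\omega^{\omega^{\alpha}},X\bigr)\ \cong\ C_{0}\bigl(\omega^{\omega^{\alpha}},C(\beta\mathbb N,X)\bigr)\ \cong\ c_{0}\bigl(\{\,C(B_{m}\times\beta\mathbb N,X)\,\}_{m}\bigr),$$
exhibiting the range, call it $E$, as a $c_{0}$-sum.

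Next I would run the reduction from the proof of Theorem \ref{com}. Writing $T$ for the projection onto the copy $Y$ of $C(\omega^{\omega^{\alpha+1}})$ and $S\colon Y\to C(\omega^{\omega^{\alpha+1}})$ for the isomorphism, one applies Lemma \ref{compress} — in the transfinite form obtained from the same proof, which from the order-to-weak${}^{*}$ continuous family $(T^{*}S^{*}\delta_{\gamma})_{\gamma\le\omega^{\omega^{\alpha+1}}}$ in $E^{*}$ produces a closed $B\subset[1,\omega^{\omega^{\alpha+1}}]$ order-isomorphic and homeomorphic to $[1,\omega^{\omega^{\alpha}}]$ on which $\|T^{*}S^{*}\delta_{\gamma}\|$ is almost constant — to obtain a complemented copy of $C(\omega^{\omega^{\alpha}})$ in $E$ whose associated family of measures is near-homogeneous (here one uses Remark \ref{pro} to see that $\{\delta_{\gamma}:\gamma\in B\}$ is the natural basis of the dual of a $1$-complemented $C(\omega^{\omega^{\alpha}})$ inside $C(\omega^{\omega^{\alpha+1}})$). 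Then the weak${}^{*}$-continuity of $\gamma\mapsto T^{*}S^{*}\delta_{\gamma}$ together with Lemma \ref{linfty} — applicable since $C(B_{m}\times\beta\mathbb N,X)=C(\beta\mathbb N,C(B_{m},X))$ has the required form — shows, as in Theorem \ref{com}, that this family of measures is supported up to arbitrarily small error on finitely many of the coordinates $C(B_{m}\times\beta\mathbb N,X)^{*}$. Consequently $C(\omega^{\omega^{\alpha}})$ is already complemented in a finite sub-sum $c_{0}(\{C(B_{m}\times\beta\mathbb N,X)\}_{m\le M})=C([1,\sigma_{M}]\times\beta\mathbb N,X)$.

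Now $[1,\sigma_{M}]$ is a countable compact ordinal interval of Cantor--Bendixson rank $<\omega^{\alpha}$, so by Mazurkiewicz--Sierpi\'nski and Bessaga--Pe\l czy\'nski $C([1,\sigma_{M}])\cong C(\omega^{\omega^{\gamma}})$ for some $\gamma<\alpha$, and hence, via the injective tensor product, $C([1,\sigma_{M}]\times\beta\mathbb N,X)\cong C(\beta\mathbb N\times\omega^{\omega^{\gamma}},X)$. Thus $C(\omega^{\omega^{\alpha}})\st{c}{\hr} C(\beta\mathbb N\times\omega^{\omega^{\gamma}},X)$; since $\gamma+1\le\alpha$ gives $\omega^{\omega^{\gamma+1}}\le\omega^{\omega^{\alpha}}$, Remark \ref{pro} yields $C(\omega^{\omega^{\gamma+1}})\st{c}{\hr} C(\omega^{\omega^{\alpha}})\st{c}{\hr} C(\beta\mathbb N\times\omega^{\omega^{\gamma}},X)$, so $(\dagger_\gamma)$ forces $c_{0}\hr X$, contradicting the assumption $c_{0}\not\hr X$. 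This closes the induction, and applying $(\dagger_\alpha)$ after the initial reduction of the target proves the theorem.

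I expect the real obstacle to be the reduction carried out in the second paragraph. In Theorem \ref{com} the complemented copy of $C(\omega^{\omega})$ is ultimately collapsed down to the finite-rank space $C(\omega^{n})$, and that alone produces the contradiction; here one must instead transport a complemented subspace of \emph{infinite} Cantor--Bendixson rank — a copy of $C(\omega^{\omega^{\alpha}})$, needed so that the induction hypothesis $(\dagger_\gamma)$ can bite — through both the homogenization and the passage to a finite sub-sum. This requires the transfinite strengthening of Lemma \ref{compress} (producing a near-homogeneous copy of $[1,\omega^{\omega^{\alpha}}]$, rather than of $[1,\omega^{k}]$, inside $[1,\omega^{\omega^{\alpha+1}}]$) and a careful accounting of the constants lost when the measures $T^{*}S^{*}\delta_{\gamma}$ are restricted to finitely many coordinates, so that a genuine complementation rather than a mere pointwise approximation survives in the finite sub-sum.
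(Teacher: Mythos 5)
Your proposal follows essentially the same route as the paper's proof: induct on $\alpha$ with Theorem \ref{com} as the base case, realize $C(\beta\mathbb N\times\omega^{\omega^{\alpha}},X)$ as a $c_{0}$-sum via Proposition \ref{BPiso}, homogenize the norms of the measures $T^{*}S^{*}\delta_{\gamma}$ with a transfinite form of Lemma \ref{compress}, push the resulting near-homogeneous family into a finite sub-sum indexed by an ordinal of strictly smaller rank, and then invoke the inductive hypothesis (the paper phrases the induction as ``$\alpha$ is the smallest $\gamma$ with $C(\omega^{\omega^{\alpha}})$ complemented in $C(\beta\mathbb N\times\omega^{\omega^{\gamma}},X)$,'' which is equivalent to your $(\dagger_{\gamma})$ chain). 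The one inaccuracy is your appeal to Lemma \ref{linfty} for the localization step: that lemma manufactures copies of $\ell_{\infty}$ from disjointly supported large integrals and plays no role here; the localization comes instead, exactly as in the paper's own argument (and as in the reduction inside Theorem \ref{com} where supports are pushed into $[1,K]\times\beta\mathbb N$), from the fact that the dual of the $c_{0}$-sum is an $\ell_{1}$-sum, so the tail norm of the limit measure is small, and the near-constancy of $\|\mu_{\rho}\|$ together with weak${}^{*}$ lower semicontinuity of $\rho\mapsto\|P_{M}^{*}\mu_{\rho}\|$ transfers this smallness to all $\rho$ in a tail neighborhood, after which the standard small-perturbation argument yields the complemented copy in the finite sub-sum.
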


\begin{proof} Let $\alpha$ be a countable ordinal
and $X$ a Banach space  containing no copy of $c_0$. 
%Clearly for each countable ordinal $\alpha,$
%$C(\beta \mathbb N \times 
%\omega^{\omega^\alpha}, X)$ contains a complemented subspace isomorphic to
%$C(\omega^ {\omega^\alpha})$ and hence complemented subspaces isomorphic to
%$C(\omega^ {\omega^\beta})$ for all $\beta \le \alpha.$
%For the converse i
We will show by induction that $\alpha$ is the smallest
ordinal $\gamma$ such that $C(\beta \mathbb N \times 
\omega^{\omega^\gamma}, X)$ contains a complemented subspace isomorphic to
$C(\omega^
{\omega^\alpha}).$ Theorem \ref{com} shows that for $\alpha = 1$ this is
the case.
Assume that the result holds for ordinals less than $\alpha$, $\alpha>1$, and that
$\gamma<\alpha$ is the smallest ordinal such that $C(\beta \mathbb N \times 
\omega^{\omega^\gamma}, X)$ contains a complemented subspace isomorphic to
$C(\omega^ {\omega^\alpha}).$ We will show that this leads to a
contradiction.

In place of $C(\beta \mathbb N \times 
\omega^{\omega^\gamma}, X)$, we will use the isomorphic space,
$C_0(\omega^{\omega^\gamma}\times \beta
\mathbb N, X).$ Now assume that $T$ is a
projection defined on $C_0(\omega^{\omega^\gamma}\times \beta \mathbb N, X)$
with range isomorphic to $C(\omega^ {\omega^\alpha}).$ Let $\alpha_k
\uparrow \omega^\alpha$ and $\beta_k \uparrow \omega^\gamma$, where $\alpha_k = \omega^{\alpha'}k$ if $\alpha=\alpha'+1$
for some $\alpha'$, or $\alpha_k=\omega^{\xi_k}$ if $\alpha$ is a limit ordinal
and $\xi_k \uparrow \alpha,$ and
$\beta_k = \omega^{\gamma'}k$ if $\gamma=\gamma'+1$
for some $\gamma'$, or $\beta_k=\omega^{\gamma_k}$ if $\gamma$ is a limit
ordinal
and $\gamma_k \uparrow \gamma.$
Choose $k_0$ such
that $\alpha_k\ge \omega^\gamma$ for all $k \ge k_0.$ By  Lemma \ref{compress} for each
$k>k_0$, there is a $k'$ such that $\{T^*\delta_\beta: \beta \le
\omega^{\alpha_{k'}}\} $ contains a subfamily $\{\mu_\rho: \rho \le
\omega^{\alpha_k}\}$ such that $\rho \rightarrow \mu_\rho$ is a homeomorphism,
$\rho \rightarrow \beta(\rho)$ is an order homeomorphism, where
$\mu_\rho=T^*\delta_{\beta(\rho)}$, and $$|\|\mu_\rho\| -\|\mu_{\rho'}\||<1/
(4\|T\|),$$ for all $\{\rho,\rho' \le \omega^{\alpha_k}\}$. For each $m$
let $P_m$ be the
canonical projection from 
$C_0(\omega^{\omega^\gamma} \times \beta
\mathbb N, X)$ onto $C(\omega^{\beta_m} \times \beta
\mathbb N, X).$ There exists
an $m$ such that $$\|(I-P_m^*)(\mu_{\omega^{\alpha_k}})\|<1/(8\|T\|).$$
It follows
by passing to a suitable neighborhood of $\omega^{\alpha_k}$ that
we may assume that $$\|(I-P_m^*)(\mu_{\rho})\|<3/(8\|T\|),$$ for all  $\rho \le
\omega^{\alpha_k}$. According to Remark \ref{pro}  we can find a
1-complemented subspace $Z$ of $C(\omega^ {\omega^\alpha})$ which is isometric to
$C(\omega^ {\alpha_k})$ and has natural basis of its dual
$\{\mu_\rho: \rho \le
\omega^{\alpha_k}\}$. This implies that $P_m(Z)$ is a complemented  subspace of
$C(\beta \mathbb N \times \omega^{\beta_m}, X)$ isomorphic to
$C(\omega^ {\alpha_k})$. Because
$\beta_m<\omega^{\gamma}$ and $\alpha_k \ge \omega^{\gamma}$, $C(\beta
\mathbb N \times \omega^{\beta_m}, X)$ cannot contain a complemented copy of
$C(\omega^{\omega^{\gamma}})$ by the inductive
hypothesis. Thus we have a contradiction and the theorem is proved.
\end{proof}

Now we can prove

\begin{theorem}\label{excom} Let $X$ be a Banach space  containing no copy of
$c_{0}$, $K$ an infinite compact metric space and $0 \leq \alpha<
\omega_{1}$. Then
$$C(K)   \st{c}{\hr}     C(\beta {\mathbb N}  \times 
\omega^{\omega^\alpha}, X)    \Longleftrightarrow  C(K) \sim
C(\omega^{\omega^\xi}) \ \hbox{for some} \ \ 0 \leq \xi \leq \alpha.$$
\end{theorem}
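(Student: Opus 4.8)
The plan is to prove both implications; the substance is in $\Rightarrow$, which feeds the structure of $K$ into Theorem \ref{alphacom}, while $\Leftarrow$ is a short chain of contractive complementations (and does not even use the hypothesis on $c_0$).

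\emph{The implication $\Leftarrow$.} Assume $C(K)\sim C(\omega^{\omega^\xi})$ with $0\le\xi\le\alpha$, and (the case $X=\{0\}$ being trivial) fix norm-one $x_0\in X$ and $x_0^*\in X^*$ with $x_0^*(x_0)=1$. The maps $f\mapsto f\,x_0$ and $g\mapsto x_0^*\circ g$ realize $C(\omega^{\omega^\alpha})$ as a $1$-complemented subspace of $C(\omega^{\omega^\alpha},X)$; fixing $p\in\mathbb N\subset\beta\mathbb N$, the retraction $(q,t)\mapsto (p,t)$ of $\beta\mathbb N\times\omega^{\omega^\alpha}$ onto $\{p\}\times\omega^{\omega^\alpha}$ realizes $C(\omega^{\omega^\alpha},X)$ as a $1$-complemented subspace of $C(\beta\mathbb N\times\omega^{\omega^\alpha},X)$; and Remark \ref{pro} (applied trivially when $\xi=\alpha$, since $\omega^\xi\le\omega^\alpha$) realizes $C(\omega^{\omega^\xi})$ as a contractively complemented subspace of $C(\omega^{\omega^\alpha})$. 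Composing these three complemented embeddings gives $C(K)\sim C(\omega^{\omega^\xi})\st{c}{\hr}C(\beta\mathbb N\times\omega^{\omega^\alpha},X)$.

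\emph{The implication $\Rightarrow$.} Suppose $X$ contains no copy of $c_0$ and $C(K)\st{c}{\hr}C(\beta\mathbb N\times\omega^{\omega^\alpha},X)$. Since complemented embeddings compose, Theorem \ref{alphacom} immediately yields that $C(K)$ contains no complemented copy of $C(\omega^{\omega^\beta})$ for any $\beta>\alpha$; this is the only point at which the $c_0$ hypothesis is used. First I would show $K$ must be countable. If not, Milutin's theorem \cite{M}, \cite{R} gives $C(K)\sim C(\Delta)$ for the Cantor set $\Delta$; now $\Delta\times[1,\omega^{\omega^{\alpha+1}}]$ is a nonempty, perfect, totally disconnected compact metric space, so by Brouwer's characterization it is homeomorphic to $\Delta$, whence $C(\Delta)\sim C([1,\omega^{\omega^{\alpha+1}}],C(\Delta))$. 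In this last space the constant embedding $f\mapsto f\cdot 1_\Delta$ together with evaluation at a point of $\Delta$ exhibit $C(\omega^{\omega^{\alpha+1}})$ as a $1$-complemented subspace. Thus $C(\omega^{\omega^{\alpha+1}})\st{c}{\hr}C(K)$ with $\alpha+1>\alpha$, a contradiction. Hence $K$ is countable, so by Mazurkiewicz--Sierpi\'nski \cite{MS} it is homeomorphic to some $[1,\delta]$ with $\omega\le\delta<\omega_1$, and by Bessaga--Pe\l czy\'nski \cite{BP2} $C(K)\sim C(\omega^{\omega^\xi})$ for the unique $\xi<\omega_1$ with $\omega^{\omega^\xi}\le\delta<\omega^{\omega^{\xi+1}}$. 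If $\xi>\alpha$, then $C(\omega^{\omega^\xi})=C(K)$ is (trivially) complemented in itself and hence in $C(\beta\mathbb N\times\omega^{\omega^\alpha},X)$, contradicting Theorem \ref{alphacom} once more. Therefore $0\le\xi\le\alpha$, as claimed.

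The main obstacle is the countability step in $\Rightarrow$: ruling out uncountable $K$ requires Milutin's theorem together with the fact that $C(\omega^{\omega^{\alpha+1}})$ (indeed every $C(L)$ with $L$ countable compact metric) embeds complementedly into $C[0,1]$ — classical, but the cleanest self-contained route is the Cantor-set product trick above via Brouwer's theorem. Everything else is bookkeeping with complemented embeddings together with Theorem \ref{alphacom} and the Mazurkiewicz--Sierpi\'nski/Bessaga--Pe\l czy\'nski classification of separable $C(K)$.
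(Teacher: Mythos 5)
Your proof is correct and follows essentially the same route as the paper: Milutin's theorem plus the fact that every $C(\omega^{\omega^{\alpha+1}})$ is complemented in $C[0,1]$ rules out uncountable $K$ via Theorem \ref{alphacom}, and the Mazurkiewicz--Sierpi\'nski and Bessaga--Pe\l czy\'nski classification combined with Theorem \ref{alphacom} handles the countable case. The only difference is that you supply a self-contained justification (the Brouwer/Cantor-set product trick) of the classical complemented-embedding fact that the paper simply cites, and you write out the ``obvious'' converse explicitly.
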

\begin{proof}
Since that $C([0, 1])$ contains  complemented  copies of every
$C(\omega^{\omega^\alpha})$, $0\leq \alpha < \omega_{1}$,   
it follows directly from  Milutin's theorem and Theorem \ref{alphacom} that
$K$ must be countable if $C(K)   \st{c}{\hr}     C(\beta {\mathbb N}  \times 
\omega^{\omega^\alpha}, X)$. If $K$ is countable, $C(K)$ is isomorphic to
$C(\omega^{\omega^\xi})$ for some countable ordinal $\xi$ and Theorem
\ref{alphacom} determines the possible values of $\xi$. The converse is
obvious.
\end{proof}

\begin{remark} Because $C(\beta \mathbb N,l_1)$ is isomorphic to its
$c_0$-sum, see Theorem \ref{iso}, the  above result in the case $X=l_{1}$ does not mimic that
for the scalar case where there is an additional isomorphism class.
Indeed, since
$c_0$ is not isomorphic to a complemented subspace of $C(\beta \mathbb N)$,
$C(\beta \mathbb
N)$ and $C(\beta \mathbb
N\times \omega)$ are not isomorphic.
\end{remark}

We can now prove the main result of this section.

\begin{theorem}\label{main} Let $X$ be a Banach space containing no copy of
$c_{0}$. Then for any infinite compact metric spaces $K_{1}$ and $K_{2}$
we have
$$C(\beta {\mathbb N} \times K_{1}, X) \sim C(\beta {\mathbb N} \times
K_{2}, X) \Longleftrightarrow C(K_{1}) \sim C(K_{2}).$$
\end{theorem}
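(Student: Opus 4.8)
The plan is to prove the two implications separately, with the forward direction being routine and the reverse direction requiring the machinery developed in this section. For the reverse implication, suppose $C(K_1)\sim C(K_2)$. If both $K_1$ and $K_2$ are uncountable, then by Milutin's theorem $C(K_1)\sim C([0,1])\sim C(K_2)$, and since $C(\beta\mathbb N\times K_i,X)$ is isometric to $C(K_i)\stackrel{\scriptscriptstyle\vee}{\otimes}C(\beta\mathbb N,X)$ (equivalently $C(\beta\mathbb N,C(K_i,X))$), replacing $C(K_i)$ by the isomorphic space $C([0,1])$ gives $C(\beta\mathbb N\times K_1,X)\sim C(\beta\mathbb N\times[0,1],X)\sim C(\beta\mathbb N\times K_2,X)$, as noted in the Preliminaries where we observed we may replace $K$ by $K_1$ whenever $C(K)\sim C(K_1)$. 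If both $K_1$ and $K_2$ are countable, then by Bessaga--Pe\l czy\'nski $C(K_1)\sim C(\omega^{\omega^\xi})\sim C(K_2)$ for a common countable ordinal $\xi$, and the same tensor-product substitution yields the desired isomorphism. So the reverse direction follows from the isometric flexibility of the injective tensor product together with the classical isomorphic classifications of separable $C(K)$.

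For the forward implication, assume $C(\beta\mathbb N\times K_1,X)\sim C(\beta\mathbb N\times K_2,X)$ and suppose, toward a contradiction, that $C(K_1)\not\sim C(K_2)$. First dispose of the case where the two spaces differ in cardinality type: if, say, $K_1$ is uncountable and $K_2$ is countable, then $C(\beta\mathbb N\times K_1,X)$ contains a complemented copy of $C(\beta\mathbb N\times[0,1],X)$, hence a complemented copy of $C(\omega^{\omega^\alpha})$ for every countable $\alpha$ (since $C([0,1])$ contains complemented copies of all $C(\omega^{\omega^\alpha})$), whereas $K_2$, being countable, is homeomorphic to some $[1,\beta]$ with $\beta<\omega^{\omega^{\alpha_0+1}}$, so $C(\beta\mathbb N\times K_2,X)\sim C(\beta\mathbb N\times\omega^{\omega^{\alpha_0}},X)$ by Proposition \ref{BPiso}, and Theorem \ref{excom} shows this cannot contain a complemented copy of $C(\omega^{\omega^{\alpha_0+1}})$ --- contradicting the assumed isomorphism. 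This leaves the case where both $K_i$ are countably infinite, say $C(K_i)\sim C(\omega^{\omega^{\xi_i}})$ with $\xi_1\neq\xi_2$, WLOG $\xi_1<\xi_2$. Then (using Proposition \ref{BPiso} to pass from $K_i$ to $\omega^{\omega^{\xi_i}}$) $C(\beta\mathbb N\times\omega^{\omega^{\xi_2}},X)$ contains a complemented copy of $C(\omega^{\omega^{\xi_2}})$, so the isomorphism forces $C(\beta\mathbb N\times\omega^{\omega^{\xi_1}},X)$ to contain a complemented copy of $C(\omega^{\omega^{\xi_2}})$; but Theorem \ref{excom} (in the contrapositive, with $K=\omega^{\omega^{\xi_2}}$ and $\alpha=\xi_1<\xi_2$) says this is impossible since $C(\omega^{\omega^{\xi_2}})\not\sim C(\omega^{\omega^\eta})$ for any $\eta\le\xi_1$. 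This contradiction completes the proof.

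The main obstacle is making sure the reductions via the injective tensor product and Proposition \ref{BPiso} are applied to the right ``slot'': one must consistently view $C(\beta\mathbb N\times K,X)$ as $C(\beta\mathbb N,C(K,X))$ when transferring complemented copies of separable $C(L)$ spaces (which live naturally in the $C(K)$ factor), and the key leverage is entirely supplied by Theorem \ref{excom}, which pins down exactly which separable $C(L)$ embed complementably into $C(\beta\mathbb N\times\omega^{\omega^\alpha},X)$ when $c_0\not\hookrightarrow X$. Everything else is bookkeeping with the Bessaga--Pe\l czy\'nski and Milutin classifications. Note no hypothesis beyond $c_0\not\hookrightarrow X$ is needed, and in particular we do not use any of the ``dagger'' properties or the quotient results of Section \ref{quotients}.
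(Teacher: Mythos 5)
Your proposal is correct and follows essentially the same route as the paper: the easy direction is the tensor-product substitution, and the hard direction splits into the case of two countable spaces and the mixed/uncountable case, with Theorem \ref{excom} (fed by Milutin and Bessaga--Pe\l czy\'nski) doing all the work in both cases. The only cosmetic difference is that you phrase the forward implication as a contradiction argument while the paper derives the two inequalities $\omega^{\omega^\eta}\le\omega^{\omega^\xi}$ and $\omega^{\omega^\xi}\le\omega^{\omega^\eta}$ directly.
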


\begin{proof} Let us show the non trivial implication. Suppose that
$$C(\beta {\mathbb N} \times K_{1}, X) \sim C(\beta {\mathbb N} \times K_{2}, X).$$

It is convenient to consider two subcases:

Case 1. $K_{1}$ and $K_{2}$ are countable. Hence there are countable ordinals
$\xi$ and $\eta$ such that $C(K_{1}) \sim C(\omega^{\omega^\xi})$ and $C(K_{2})
\sim C(\omega^{\omega^\eta})$. Then according to our hypothesis,
$$ C(\omega^{\omega^\eta}) \st{c}{\hr} C(\omega^{\omega^\eta}\times \beta
{\mathbb N}, X) \sim C(\omega^{\omega^\xi} \times \beta \mathbb N,
X).$$
Therefore by  Theorem \ref{excom}. we deduce that
$\omega^{\omega^\eta} \leq \omega^{\omega^\xi}$. Similarly, we show that
$\omega^{\omega^\xi} \leq \omega^{\omega^\eta}$. Hence $C(K_{1}) \sim C(K_{2})$.

Case 2. $K_{1}$ or $K_{2}$ is uncountable. Without loss of generality we
suppose that $K_{2}$ is uncountable. To prove  that $C(K_{1}) \sim
C(K_{2})$, it is enough  by Milutin's theorem to show that $K_{1}$ is also
uncountable. Assume to the contrary. So there exists an ordinal $\xi$ such
that $C(K_{1}) \sim C(\omega^{\omega^\xi})$. Since $C(K_{2}) \sim C([0, 1])$ we
have by our hypothesis that
$$C(\omega^{\omega^{\xi +1}}) \st{c}{\hr} C(\beta \mathbb N \times [0, 1], X)
\st{c}{\hr}  C(\omega^{\omega^\xi} \times \beta {\mathbb N}, X),$$
a contradiction of Theorem \ref{excom}. This completes the proof of Theorem \ref{main}.

\end{proof}

\begin{corollary}
Let $X$ be a Banach space containing no copy of
$c_{0}$. Then for any infinite compact metric spaces $K_{1}$ and $K_{2}$
we have
$$C(K_{1}, X) \sim C(K_{2}, X) \Longleftrightarrow C(K_{1}) \sim C(K_{2})$$
\end{corollary}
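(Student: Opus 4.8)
The plan is to reduce the corollary to Theorem \ref{main} by tensoring with $C(\beta\mathbb N)$ on one side and using the injective tensor product description of $C(K,X)$ on the other. For the forward implication, suppose $C(K_1,X) \sim C(K_2,X)$. Using the isomorphisms $C(\beta\mathbb N \times K_i, X) \sim C(\beta\mathbb N, C(K_i,X))$ recorded in Section \ref{Sec 2}, an isomorphism $C(K_1,X)\sim C(K_2,X)$ induces an isomorphism $C(\beta\mathbb N \times K_1, X)\sim C(\beta\mathbb N \times K_2, X)$ simply by applying it coordinatewise (i.e.\ tensoring with the identity on $C(\beta\mathbb N)$, or equivalently viewing these as spaces of continuous $C(K_i,X)$-valued functions on $\beta\mathbb N$). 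Since $X$ contains no copy of $c_0$, Theorem \ref{main} then yields $C(K_1)\sim C(K_2)$.

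For the converse, assume $C(K_1)\sim C(K_2)$. Here I would invoke the fact, already used in Section \ref{Sec 2}, that $C(K,X)$ is isometric to the injective tensor product $C(K)\stackrel{\scriptscriptstyle\vee}{\otimes}X$; functoriality of $\stackrel{\scriptscriptstyle\vee}{\otimes}X$ immediately gives $C(K_1,X)\sim C(K_1)\stackrel{\scriptscriptstyle\vee}{\otimes}X \sim C(K_2)\stackrel{\scriptscriptstyle\vee}{\otimes}X \sim C(K_2,X)$. Note that this direction does not even need the hypothesis on $X$.

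There is essentially no obstacle here: the corollary is a formal consequence of Theorem \ref{main} together with the two elementary structural facts about $C(K,X)$ collected in the preliminaries ($C(\beta\mathbb N\times K,X)\sim C(\beta\mathbb N,C(K,X))$ and $C(K,X)\cong C(K)\stackrel{\scriptscriptstyle\vee}{\otimes}X$). The only point that deserves a sentence of care is checking that an abstract isomorphism $C(K_1,X)\sim C(K_2,X)$, which need not respect any tensor structure, still induces an isomorphism after amalgamating with $C(\beta\mathbb N)$; this is handled by the observation that $C(\beta\mathbb N\times K_i,X)$ is literally $C(\beta\mathbb N, C(K_i,X))$, on which an isomorphism of the range spaces acts directly.
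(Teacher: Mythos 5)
Your proposal is correct and follows exactly the paper's argument: the converse direction is immediate from the tensor description of $C(K,X)$, and the forward direction passes from $C(K_1,X)\sim C(K_2,X)$ to $C(\beta\mathbb N\times K_1,X)\sim C(\beta\mathbb N\times K_2,X)$ via $C(\beta\mathbb N\times K_i,X)\sim C(\beta\mathbb N, C(K_i,X))$ and then invokes Theorem \ref{main}. The extra sentence you add about an abstract isomorphism still inducing one after amalgamating with $C(\beta\mathbb N)$ is a worthwhile clarification of what the paper leaves implicit.
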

\begin{proof} One direction is immediate.
If $C(K_{1}, X) \sim C(K_{2}, X)$ then $C(\beta {\mathbb N} \times K_{1},
X) \sim C(\beta {\mathbb N} \times
K_{2}, X)$, so this follows from the previous result.
\end{proof}
The next result can be considered as an extension of the Cembranos-Freniche
result however the proof does not yield a proof of that result. To include
the original we would need to use the Josefson-Nissenzweig Theorem, \cite{J}
and \cite{N}.

\begin{proposition}\label{extCF}
Suppose that $0\le \alpha <\omega_1$, $0\le \gamma <\omega_1$,
and $(\gamma_n)$ is either
$(\omega^{\beta_n})$ where $(\beta_n)$ increases to $\gamma$ or
$(\gamma_n)$ is $(\omega^\beta n)$ and $\gamma =
\beta +1$ for some ordinal $\beta$. $X$ is a Banach space such
that with constants independent of $n$, $C(\omega^{\gamma_n})$
is
isomorphic to a complemented subspace of $X$. Then for any infinite
compact Hausdorff
space $K$,
$C(\omega^{\omega^\alpha}\times \omega^{\omega^\gamma})$ is
isomorphic to a complemented subspace of
$C(K \times \omega^{\omega^\alpha},X).$

If $X$ is also separable then $C(\omega^{\omega^\gamma})$ is
isomorphic to a complemented subspace of
$C(K ,X)$.
\end{proposition}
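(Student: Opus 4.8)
The plan is to read off both conclusions from a single structural fact: $C(\omega^{\omega^\gamma})$ is (isometrically) the $c_0$-direct sum $\big(\sum_n C(\omega^{\gamma_n})\big)_{c_0}$. Since $\omega^{\gamma_n}\uparrow\omega^{\omega^\gamma}$ — which is exactly what the two admissible shapes of $(\gamma_n)$ encode, $\gamma$ a limit ordinal or a successor — one writes $[1,\omega^{\omega^\gamma})$ as a disjoint union of clopen subintervals $I_n$, and, because each $\omega^{\gamma_n}$ is additively indecomposable, $I_n$ is order isomorphic to $[1,\omega^{\gamma_n}]$; Proposition \ref{BPiso}(1) then identifies $C(\omega^{\omega^\gamma})$ with $C_0(\omega^{\omega^\gamma})=\big(\sum_n C(I_n)\big)_{c_0}=\big(\sum_n C(\omega^{\gamma_n})\big)_{c_0}$. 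In the same spirit (or from Lemma \ref{ordprod} together with Proposition \ref{BPiso}(2)) one gets $C(\omega^{\omega^\alpha}\times\omega)\sim C(\omega^{\omega^\alpha})$, which, since $C(L,X)$ is the injective tensor product $C(L)\otimes_\varepsilon X$, lets me absorb a factor of $\omega$ at will. I also use that $L\mapsto C(L,\,\cdot\,)$ and $W\mapsto C(\omega^{\omega^\alpha},W)$ carry complemented embeddings to complemented embeddings.

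For the first conclusion, the hypothesis gives subspaces $Y_n$ of $X$ with $Y_n\sim C(\omega^{\gamma_n})$, isomorphisms $J_n$ and projections $Q_n$ onto $Y_n$, of norm $\le M$ uniformly in $n$. Composing with the isometric complemented embedding $X\hr C(K,X)$, $x\mapsto(t\mapsto x)$, with projection $f\mapsto f(t_0)$ for a fixed $t_0\in K$, yields $C(\omega^{\gamma_n})\st{c}{\hr}C(K,X)$ with uniform constants, and taking the $c_0$-direct sum of these embeddings gives $\big(\sum_n C(\omega^{\gamma_n})\big)_{c_0}\st{c}{\hr}\big(\sum_n C(K,X)\big)_{c_0}=c_0(C(K,X))\sim C(K\times\omega,X)$. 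Thus $C(\omega^{\omega^\gamma})\st{c}{\hr}C(K\times\omega,X)$; applying $C(\omega^{\omega^\alpha},\,\cdot\,)$ and the identifications of the first paragraph ($C(\omega^{\omega^\alpha},C(\omega^{\omega^\gamma}))=C(\omega^{\omega^\alpha}\times\omega^{\omega^\gamma})$ and $C(\omega^{\omega^\alpha},C(K\times\omega,X))=C(\omega^{\omega^\alpha}\times\omega\times K,X)\sim C(\omega^{\omega^\alpha}\times K,X)$) finishes it.

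The second conclusion asks for the copy inside $C(K,X)$ itself, and $c_0(C(K,X))$ is genuinely larger (e.g. for $K=\beta\mathbb N$), so a direct construction is needed, along the lines of Cembranos and Freniche. Fix pairwise disjoint nonempty open $U_n\subseteq K$, points $t_n\in U_n$, and $\phi_n\in C(K)$ with $0\le\phi_n\le1$, $\phi_n(t_n)=1$, $\operatorname{supp}\phi_n\subseteq U_n$. Disjointness of supports makes $\Phi((f_n))=\sum_n\phi_n\otimes J_nf_n$ a well-defined element of $C(K,X)$ for $(f_n)\in\big(\sum_n C(\omega^{\gamma_n})\big)_{c_0}$ (at each point of $K$ at most one summand is nonzero, and continuity at the cluster points of $(t_n)$ uses $\|f_n\|\to0$), with $\|\Phi((f_n))\|=\sup_n\|J_nf_n\|$, so $\Phi$ is an isomorphism onto its range. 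The candidate inverse $R\colon G\mapsto(J_n^{-1}Q_nG(t_n))_n$ satisfies $R\Phi=\mathrm{id}$, but a priori maps $C(K,X)$ only into the $\ell_\infty$-direct sum; it lands in the $c_0$-sum precisely when $\sup_{x\in C}\|Q_nx\|\to0$ for every compact $C\subseteq X$, i.e. when $Q_n\to0$ strongly. If $(t_n)$ actually converges to some $t_\infty$ (chosen with $t_\infty\notin\bigcup_n U_n$) this is circumvented by using $G\mapsto(J_n^{-1}Q_n(G(t_n)-G(t_\infty)))_n$ instead, which lands in the $c_0$-sum automatically and still inverts $\Phi$. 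For general $K$ — in particular $K=\beta\mathbb N$, where there is no nontrivial convergent sequence — one instead uses the separability of $X$ to choose the copies $Y_n$ so that $Q_n\to0$ strongly; these uniformly complemented copies furnish the explicit functionals that, in the original argument of Cembranos and Freniche, are supplied by the Josefson--Nissenzweig theorem (which is why the present proof does not reprove their result). With $\Phi$ and $R$ in hand, $P=\Phi R$ is a bounded projection of $C(K,X)$ onto a copy of $\big(\sum_n C(\omega^{\gamma_n})\big)_{c_0}\sim C(\omega^{\omega^\gamma})$.

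The step I expect to be the real obstacle is this last one: showing that, for a separable $X$ carrying the uniformly complemented copies of $C(\omega^{\gamma_n})$, these copies can be chosen with strong-operator-null projections (equivalently, with the associated functionals vanishing uniformly on compacta). The remaining ingredients — the ordinal bookkeeping, the $c_0$-sum manipulations, and the use of the injective tensor product — are routine.
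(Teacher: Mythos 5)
Your argument for the first conclusion is essentially the paper's: write $C(K\times\omega^{\omega^\alpha},X)$ as a $c_0$-sum of copies of $C(\omega^{\omega^\alpha}\times K,X)$, evaluate at a fixed point, apply the uniformly bounded projections onto the copies of $C(\omega^{\gamma_n})$, and re-embed as constant functions; together with the ordinal bookkeeping $(\sum_n C(\omega^{\gamma_n}))_{c_0}\sim C(\omega^{\omega^\gamma})$ and the absorption of a factor of $\omega$, that part is complete.

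The second conclusion is where the problem lies, and you have located it yourself: everything reduces to producing uniformly complemented copies of $C(\omega^{\gamma_n})$ in $X$ whose projections $Q_n$ tend to $0$ in the strong operator topology, and you assert rather than prove that separability allows this. That step is not a routine afterthought; it is the entire content of the separable case, and without it your map $R$ need not land in the $c_0$-sum (equivalently, $\Phi R f$ need not be continuous at accumulation points of $\{t_n\}$). The paper supplies the missing construction as follows. Replace each copy of $C(\omega^{\gamma_n})$ by the isomorphic $C_0(\omega^{\gamma_n})$ and consider, inside it, the decreasing chain of \emph{tail} subspaces $Y_{n,k}$, the images of $f\mapsto f\,1_{(\rho_k,\omega^{\gamma_n}]}$ with $\rho_k\nearrow\omega^{\gamma_n}$: each $Y_{n,k}$ is again isomorphic to $C_0(\omega^{\gamma_n})$ with uniformly bounded projections $Q_{n,k}$, and for fixed $n$ and fixed $x$ one has $\|Q_{n,k}x\|\to0$ as $k\to\infty$, because $Q_{n,k}x$ is the tail of the fixed function $Q_nx\in C_0(\omega^{\gamma_n})$, which vanishes at $\omega^{\gamma_n}$. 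Separability of $X$ gives a decreasing countable base $(G_j)$ of weak${}^*$-neighborhoods of $0$ in $B_{X^*}$, and the pointwise convergence just noted shows that for each $n$ and $j$ one has $D^{-1}Q_{n,k}^*(B_{X^*})\subset G_j$ for all large $k$ (here $D=\sup_{s,k}\|Q_{s,k}\|$). A diagonal choice $k_n$ with $D^{-1}Q_{n,k_n}^*(B_{X^*})\subset G_n$ then yields $\|Q_{n,k_n}x\|=\sup_{x^*\in B_{X^*}}|(Q_{n,k_n}^*x^*)(x)|\to0$ for every $x\in X$, i.e.\ exactly the strong-operator-nullity you need, with uniformity on compacta coming from the bound $D$. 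Note that the trick genuinely requires working with $C_0(\omega^{\gamma_n})$ rather than $C(\omega^{\gamma_n})$ (harmless, since the $c_0$-sum of the $C_0$'s is still $C(\omega^{\omega^\gamma})$), and that your fallback of choosing $t_n\to t_\infty$ and subtracting $G(t_\infty)$ cannot replace it for $K=\beta\mathbb N$. So the architecture of your proof coincides with the paper's, but the one nontrivial lemma is left unproved.
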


\begin{proof} Clearly $C(\omega^{\omega^\alpha})$ is isomorphic to a
complemented subspace of $C(K \times \omega^{\omega^\alpha},X).$
We also know by Lemma \ref{ordprod} that $$C(\omega^{\omega^\alpha}\times
\omega^{\omega^\gamma}) \sim C(\omega^{\omega^{\max
\{\alpha,\gamma\}}}).$$ So we need only show that
$C(\omega^{\omega^\gamma})$ is isomorphic to a complemented subspace of 
$C(K\times \omega^{\omega^\alpha},X)$. The case $\gamma=0$ is the
Cembranos-Freniche result but also is immediate from the fact that $\alpha
\ge 0.$ Now assume $\gamma \ge 1.$

Notice that $C(K\times \omega^{\omega^\alpha}, X)$ is isomorphic to
$C_0(\omega \times \omega^{\omega^\alpha} \times K, X).$ This in turn
is isomorphic to 
$$(\sum_{j\in \mathbb N} C(\omega^{\omega^\alpha} \times K, X))_{c_0}.$$
For each $n \in \mathbb N$ let $X_n$ be a complemented subspace of $X$
which is isomorphic to $C(\omega^{\gamma_n})$ and let
$P_n$ be a projection from $X$ onto $X_n$. By the hypothesis we can assume
that the norms of the isomorphisms and the projections are bounded
independent of $n$. Choose any point $a \in
\omega^{\omega^\alpha} \times K$. If $(f_j) \in (\sum_j
C(\omega^{\omega^\alpha} \times K, X))_{c_0}$, then
$$P((f_j))=(P_j(f_j(a))1_{\omega^{\omega^\alpha} \times K})$$ defines a
projection onto a space isometric to the $c_0$-sum of $X_j$. Because
$(\sum_{j\in \mathbb N} C(\omega^{\gamma_j}))_{c_0}$ is isomorphic to
$C(\omega^{\omega^\gamma})$, the $c_0$-sum
of $X_j$ is isomorphic to $C(\omega^{\omega^\gamma})$.

If $X$ is separable, then with $X_n$ as before let $Y_n$ be the subspace of
$X_n$ which is the image of $C_0(\omega^{\gamma_n})$ under the isomorphism
from $C(\omega^{\gamma_n})$ and $Q_n$ be the
projection from $X$ onto $Y_n$. Because $X$ is separable there is a decreasing
sequence of 
weak${}^*$-open sets $G_j$ which is a base for the neighborhood system of $0$
in the ball of $X^*$, $B_{X^*}.$ For each $n$ there is a sequence of complemented
subspaces $Y_{n,k}$ of $Y_n$ with projections $Q_{n,k}$, $Y_{n,k}\supset
Y_{n,k+1}$ and $Y_{n,k}$ is isomorphic to $C_0(\omega^{\gamma_n})$ for all $k$, 
such that for each $j$ and $n$ there is a $K$
such that for all $k \ge K$, $$Q_{n,k}^*(X^*) \cap B_{X^*} \subset G_j.$$
Indeed if $\rho_k \nearrow \omega^{\gamma_n}$, then $f \rightarrow f %[
1_{(\rho_k,\omega^{\gamma_n}]}$ is a projection onto a subspace of %)
$C_0(\omega^{\gamma_n})$ isomorphic to $C_0(\omega^{\gamma_n})$ and $Y_{n,k}$
can be taken to be the image of this subspace in $Y_n$.

Let $(g_n)$ be a sequence of disjointly supported non-negative
norm one elements in $C(K)$ such that for each $n$ there is an open set $$H_n
\supset \overline{\{t:g_n(t)>0\}},$$ with $H_n\cap H_m = \emptyset$ for all $m
\ne n,$  and $t_n \in K$ such that $g_n(t_n)=1$ for all
$n$. Let $$D=\sup_{s,k} \|Q_{s,k}\|,$$ and choose $k_n$ such that
$$D^{-1}Q_{n,k_n}^*(B_{X^*}) \subset G_n,$$ for
all $n$ and let $$Z=[g_n y_n: y_n \in Y_{n,k_n}, n\in \mathbb N].$$ Clearly $Z$
is isomorphic to $(\sum_n C_0(\omega^{\gamma_n}))_{c_0}$ which is isomorphic to
$C(\omega^{\omega^\gamma})$. Define an operator $T$ from $C(K,X)$ into $Z$ by
by $$Tf(t) = g_n(t) Q_{n,k_n}(f(t_n)),$$ for all $t \in \text{ supp }g_n$ and
$Tf(t)=0$ if $t \notin \cup_n \text{ supp }g_n$. Because each $g_n$ is
continuous, $Tf$ is continuous on $H_n$ for all $n$. If $\epsilon>0$,
$t'_n \in \text{ supp
}g_n$, and $t$ is an accumulation point of $\{t'_n \}$, 
by the continuity of $f,$ $\|f(t)-f(t'_n)\|<\epsilon$ for all $t'_n
\in H$ where $H$ is some neighborhood of $t$. Because $t$ cannot be in any
$H_n$, $T(f(t))=0.$ By the choice of $k_n$ we have that
$$\lim_n \sup_{x^*\in B_{X^*}}|(Q_{n,k_n}^* x^*)(f(t))|=0$$ and for $t'_n \in H$,
$$\epsilon \sup_{s,k \in \mathbb N}\|Q_{s,k}\|
>\sup_{x^*\in B_{X^*}}|(Q_{n,k_n}^*x^*)(f(t'_n))-(Q_{n,k_n}^*
x^*)(f(t))|.$$
Thus 
$$\|(Tf)(t)\|=0=
\lim_{n\in \mathcal N} \sup_{x^*\in B_{X^*}}|(Q_{n,k_n}^*
x^*)(f(t_n))g(t'_n)|=\lim_{n\in \mathcal N}\|(Tf)(t'_n)\|,$$
where the
limit is over some net $(t'_n)_{n \in \mathcal N}$ so that
$\lim_{n\in \mathcal N}t'_n =t.$ 

It is easy to see that $$\|T\| \le \sup_{s,k \in \mathbb N}\|Q_{s,k}\|$$ and,
because $g_n(t_n)=1$ and each $Q_{n,k_n}$ is a projection, that $T$ is a
projection.
\end{proof}

\begin{remark}
We do not whether the separability condition in the second part
is necessary but the argument
fails for the natural choices of $X_n$ if $X=\{(x_n): \ \forall  \ n \in
\mathbb N,x_n
\in C(\omega^n),\|(x_n)\|=\sup_n \|x_n\|<\infty \}$.
\end{remark}

In the next section we will prove some results about quotients of $C(K,X)$
isomorphic to $C(\omega^{\omega^\alpha})$.
If we consider quotients in the previous proposition
instead of complemented subspaces, the
analogous results hold. The proof is similar to the previous one except
that the argument is now entirely in the dual.

\begin{proposition}\label{qextCF}
Suppose that $0\le \alpha <\omega_1$, $0\le \gamma <\omega_1$,
and $(\gamma_n)$ is either
$(\omega^{\beta_n})$ where $(\beta_n)$ increases to $\gamma$ or
$(\omega^\beta n)$ and $\gamma =
\beta +1$. $X$ is a Banach space such
that with constants independent of $n$, $C(\omega^{\gamma_n})$
is
isomorphic to a quotient of $X$. Then for any infinite
compact Hausdorff
space $K,$
$C(\omega^{\omega^\alpha}\times \omega^{\omega^\gamma})$ is
isomorphic to a quotient of
$C(K \times \omega^{\omega^\alpha},X).$

If $X$ is also separable then $C(\omega^{\omega^\gamma})$ is
isomorphic to a quotient of
$C(K ,X)$.
\end{proposition}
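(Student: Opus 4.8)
The plan is to follow the proof of Proposition~\ref{extCF} step for step, replacing the complemented copies of $C(\omega^{\gamma_n})$ inside $X$ by quotient maps $q_n$ of $X$ onto $C(\omega^{\gamma_n})$ (say $\|q_n\|\le D$ and $q_n(B_X)\supseteq\delta B_{C(\omega^{\gamma_n})}$, uniformly in $n$) and transporting the constructions to the adjoint operators. As there, Lemma~\ref{ordprod} gives $C(\omega^{\omega^\alpha}\times\omega^{\omega^\gamma})\sim C(\omega^{\omega^{\max\{\alpha,\gamma\}}})$, and $C(\omega^{\omega^\alpha})$ is already a quotient of $C(K\times\omega^{\omega^\alpha},X)$ (push down to scalars by a norm-one functional on $X$, then restrict to a fibre $\{a\}\times\omega^{\omega^\alpha}$), so it is enough to exhibit $C(\omega^{\omega^\gamma})$ as a quotient --- of $C(K\times\omega^{\omega^\alpha},X)$ for the first statement (the case $\max\{\alpha,\gamma\}=\alpha$ being then covered by the previous remark), and of $C(K,X)$ for the second; we may assume $\gamma\ge1$ (cf. Proposition~\ref{extCF}). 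I will also use, exactly as in Proposition~\ref{extCF}, that the assumptions on $(\gamma_n)$ give $\bigl(\sum_n C(\omega^{\gamma_n})\bigr)_{c_0}\sim C(\omega^{\omega^\gamma})$, hence also $\bigl(\sum_n C_0(\omega^{\gamma_n})\bigr)_{c_0}\sim C(\omega^{\omega^\gamma})$ since $C_0(\omega^{\gamma_n})\sim C(\omega^{\gamma_n})$ by Proposition~\ref{BPiso}.

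For the first statement I would write $C(K\times\omega^{\omega^\alpha},X)\sim\bigl(\sum_n C(\omega^{\omega^\alpha}\times K,X)\bigr)_{c_0}$ as in Proposition~\ref{extCF}, fix a point $a\in\omega^{\omega^\alpha}\times K$, and define $Q\bigl((f_n)_n\bigr)=\bigl(q_n(f_n(a))\bigr)_n$. Since $\|f_n\|\to0$ forces $\|q_n(f_n(a))\|\le D\|f_n\|\to0$, the map $Q$ lands in $\bigl(\sum_n C(\omega^{\gamma_n})\bigr)_{c_0}$ with $\|Q\|\le D$; lifting $(w_n)$ in the target to constant functions $f_n\equiv x_n$ with $q_n(x_n)=w_n$ and $\|x_n\|\le\delta^{-1}\|w_n\|$ shows $Q$ is onto, so $Q$ is a quotient map onto $\bigl(\sum_n C(\omega^{\gamma_n})\bigr)_{c_0}\sim C(\omega^{\omega^\gamma})$. (Equivalently $Q^*$ is the embedding $(\nu_n)_n\mapsto(\delta_a\otimes q_n^*\nu_n)_n$, bounded below because each $q_n^*$ is and the point masses $\delta_a$ lose no norm.) In the second statement there is no spare $\omega^{\omega^\alpha}$ factor, so the single point $a$ must be replaced by a sequence $(t_n)$ of points of $K$ carrying disjointly supported bumps $g_n\in C(K)$, $g_n(t_n)=1$, with pairwise disjoint open $H_n\supset\overline{\{g_n>0\}}$, chosen as in Proposition~\ref{extCF}.

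The \emph{main obstacle} is then to manufacture, for the second statement, quotient maps $Q_n$ of $X$ onto $C_0(\omega^{\gamma_n})$, with uniform constants, such that $\|Q_n(x)\|\to0$ for every fixed $x\in X$ --- the dual counterpart of the shrinking projections $Q_{n,k_n}$ of Proposition~\ref{extCF}. I would fix $\rho^{(n)}_k\uparrow\omega^{\gamma_n}$ and an order isomorphism $\sigma_{n,k}\colon[1,\omega^{\gamma_n})\to(\rho^{(n)}_k,\omega^{\gamma_n})$ (available because $\omega^{\gamma_n}$ is additively indecomposable) and consider the operator $\psi_{n,k}^\flat\colon C(\omega^{\gamma_n})\to C_0(\omega^{\gamma_n})$, $g\mapsto g\circ\sigma_{n,k}-g(\omega^{\gamma_n})$. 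Its adjoint carries the point mass at $\xi<\omega^{\gamma_n}$ to $\delta_{\sigma_{n,k}(\xi)}-\delta_{\omega^{\gamma_n}}$, so is an isomorphic embedding of $C_0(\omega^{\gamma_n})^*$ with constants independent of $(n,k)$; hence $\psi_{n,k}^\flat$ is a quotient map with uniform constants, and $\|\psi_{n,k}^\flat g\|=\sup\{|g(\eta)-g(\omega^{\gamma_n})|:\rho^{(n)}_k<\eta<\omega^{\gamma_n}\}\to0$ as $k\to\infty$ for each fixed $g$, by continuity of $g$ at $\omega^{\gamma_n}$. Putting $q_{n,k}=\psi_{n,k}^\flat\circ q_n$ and, using a countable dense subset of $B_X$, choosing $k_n$ so that $Q_n:=q_{n,k_n}$ satisfies $\|Q_n(x)\|\to0$ as $n\to\infty$ for every $x$, is exactly the analog of the condition ``$Q^*_{n,k_n}(B_{X^*})\subset G_n$'' in Proposition~\ref{extCF}; separability of $X$ is used precisely here.

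With the $Q_n$ at hand the remaining steps parallel Proposition~\ref{extCF}. Set $Tf=(Q_n(f(t_n)))_n$: since $f(K)$ is norm-compact and $\|Q_n(x)\|\to0$ for every $x$, one gets $\|Q_n(f(t_n))\|\to0$, so $T$ maps $C(K,X)$ into $\bigl(\sum_n C_0(\omega^{\gamma_n})\bigr)_{c_0}$ with $\|T\|\le\sup_n\|Q_n\|$. And $T$ is onto: given $(w_n)$ in the target, lift to $x_n$ with $Q_n(x_n)=w_n$ and $\|x_n\|\le(\text{const})\|w_n\|\to0$, and put $f=\sum_n g_n x_n$; this is continuous --- continuity at accumulation points of $\bigcup_n\overline{\{g_n>0\}}$ follows from $\|x_n\|\to0$, exactly as in Proposition~\ref{extCF} --- satisfies $f(t_n)=x_n$, and $\|f\|\le(\text{const})\|(w_n)\|$, so $Tf=(w_n)$. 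Hence $T$ is a quotient map of $C(K,X)$ onto $\bigl(\sum_n C_0(\omega^{\gamma_n})\bigr)_{c_0}\sim C(\omega^{\omega^\gamma})$, which together with the reductions of the first paragraph proves both statements. The only genuinely new ingredient compared with Proposition~\ref{extCF} is the construction of the $Q_n$ with $\|Q_n(x)\|\to0$; everything else is a routine translation into the language of quotients, carried out in the dual.
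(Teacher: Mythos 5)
Your proposal is correct and follows essentially the same route as the paper: the same reduction via Lemma \ref{ordprod} and the $c_0$-sum decomposition, the same choice of points $t_n$ with disjoint neighborhoods, the same tail-truncation trick on $C_0(\omega^{\gamma_n})$, and the same use of separability to force the maps to vanish asymptotically on each fixed vector. The only difference is presentational: you build the quotient operators explicitly in the primal (your $\psi^\flat_{n,k}\circ q_n$ playing the role of the paper's $Q_{n,k}$), whereas the paper phrases the same construction dually, exhibiting a weak${}^*$-isomorphic copy of $\bigl(\bigl(\sum_n C_0(\omega^{\gamma_n})\bigr)_{c_0}\bigr)^*$ spanned by the functionals $z_n\delta_{t_n}$.
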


\begin{proof} Clearly $C(\omega^{\omega^\alpha})$ is isomorphic to a
quotient of $C(K \times \omega^{\omega^\alpha},X).$
We also know by Lemma \ref{ordprod} that $$C(\omega^{\omega^\alpha}\times
\omega^{\omega^\gamma}) \sim C(\omega^{\omega^{\max
\{\alpha,\gamma\}}}).$$ Thus as before we need only show that
$C(\omega^{\omega^\gamma})$ is isomorphic to a quotient of 
$C(K\times \omega^{\omega^\alpha},X)$. The case $\gamma=0$ is the
Cembranos-Freniche result but also is immediate from the fact that $\alpha
\ge 0.$ Assume $\gamma \ge 1.$

Notice that $C(K\times \omega^{\omega^\alpha}, X)$
is isomorphic to
$C_0(\omega \times \omega^{\omega^\alpha} \times K, X)$ and this is isomorphic to 
$$(\sum_j C(\omega^{\omega^\alpha} \times K, X))_{c_0}.$$
For each $n \in \mathbb N$ let $X_n$ be a quotient of $X$
which is isomorphic to $C(\omega^{\gamma_n})$. 
$P_n$ be a quotient map
from $X$ onto $X_n$. By the hypothesis we can assume
that the norms of the isomorphisms and the quotient maps are bounded
independent of $n$. Thus $P_n^*(X_n^*)$ is weak${}^*$-isomorphic to
$C(\omega^{\gamma_n})^*$ and the subspace $$Z=\{(z_n): z_n \in
P_n^*(X_n^*)\text{ for all } n\in \mathbb N\}$$
 of $(\sum_j
C(\omega^{\omega^\alpha} \times K, X))_{c_0})^*$ is weak${}^*$-isomorphic to 
$((\sum_j
C(\omega^{\gamma_n}))_{c_0})^*$. This space is weak${}^*$-isomorphic to
$C(\omega^{\gamma})^*$ giving us the required quotient.

If $X$ is separable, then with $X_n$ as before let $Y_n$ be the
complemented subspace of
$X_n$ which is the image of $C_0(\omega^{\gamma_n})$ under the isomorphism
and $Q_n$ be the
the quotient map from $X$ onto $Y_n$. Because $X$ is separable there is a decreasing
sequence of 
weak${}^*$-open sets $G_j$ which is a base for the neighborhood system of $0$
in the ball of $X^*$, $B_{X^*}.$ As in the proof of Proposition \ref{extCF}
for each $n$ there is a sequence of complemented
subspaces $Y_{n,k}$ of $Y_n$ with projections $Q_{n,k}$, $Y_{n,k}\supset
Y_{n,k+1}$ and $Y_{n,k}$ is isomorphic to $C_0(\omega^{\gamma_n})$ for all $k$, 
such that for each $j$ and $n$ there is a $K$
such that for all $k \ge K$, $$Q_{n,k}^*(Y_n^*) \cap B_{X^*} \subset G_j.$$

Let $(t_n)$ be a sequence of  points in $K$
such that for each $n$ there is an open set $H_n$ containing $t_n$
with $H_n\cap H_m = \emptyset$ for all $m
\ne n,$ for all 
$n$. Let $$D=\sup_{s,k} \|Q_{s,k}\|,$$ and choose $k_n$ such that
$$D^{-1}Q_{n,k_n}^*(B_{Y_n^*}) \subset G_n,$$ for
all $n$ and let $$Z=[ z_n \delta_{t_n}: z_n \in Q_{n,k_n}^*(Y_{n}^*)\text{
for all } n\in \mathbb N].$$ Clearly $Z$
is isomorphic to $((\sum_n C_0(\omega^{\gamma_n})^*)_{\ell_1}$. We  need to
show that Z is w${}^*$-isomorphic to $((\sum_n
C_0(\omega^{\gamma_n}))_{c_0})^*$. This however follows immediately from
the choice of $(k_n)$ and the fact that no point $t_j$ 
is an accumulation point of $\{t_n: n \in \mathbb N\}$. (As was shown in
the proof of Proposition \ref{extCF}).
\end{proof}

\section{Separable $C(K)$ quotients
of $C(\beta \mathbb N\times \alpha, X)$ } \label{quotients}

By Theorem \ref{CF}
we know that $C(\beta
\mathbb N, l_{p})$,  $1< p< \infty$, contains a complemented copy of $c_{0}$.
The main aim of this section is to show that
$C(\omega^\omega)$ is not even a quotient of this space,
(Proposition \ref{omomX}). Of course, this implies that $c_{0}$ is, up to an isomorphism, the only separable $C(K)$ space which is a quotient of $C(\beta
\mathbb N, l_{p})$, $1< p< \infty$.

In this section we will work with Banach spaces $X$ that satisfy the
following properties.
\begin{enumerate}
\item[($\dagger$)] $X^*$ has a monotone weak${}^*$-FDD $(X^*_m)$.
\item[($\ddagger$)] For every constant $C$, $0<C<1$,
there is a constant $C'$ such that
for all $x^*\in X^*$ and $j\in \mathbb N$,
$$\|(I-P_j)x^*\|\le C\|x^*\|+C'(\|x^*\|-\|P_jx^*\|),$$
where $(P_j)$ is the sequence of FDD projections, i.e.,
$P_j(X^*)=[X^*_m:m\le j]$.
\end{enumerate}
We will refer to such spaces as {\it satisfying  the daggers.}
Before proceeding to the main results we will verify that $\ell_p$, for
$1<p<\infty$, satisfies the daggers.

The following lemma follows from the Mean Value Theorem.

\begin{lemma} \label{meanvalue} Suppose that $0<C<1$, $1<q<\infty$,
$g(t)=(1-t^q)^{1/q}$ and $t_0$ satisfies $g(t_0)=C$. Then there is a
positive constant $C_1$ which depends on $q$  such that for all $t_0>t>0$
$$g(t)\le C+C_1(t_0-t).$$
\end{lemma}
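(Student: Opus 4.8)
The plan is a one-variable calculus argument via the Mean Value Theorem, exactly as the statement suggests. First I would record the elementary facts about $g$: since $q>1$, the function $g(t)=(1-t^q)^{1/q}$ is $C^1$ on $[0,1)$ with
\[
g'(t)=-t^{q-1}(1-t^q)^{(1-q)/q}\le 0 ,
\]
and $g$ is strictly decreasing from $g(0)=1$ to $g(1)=0$; hence the point $t_0$ with $g(t_0)=C\in(0,1)$ lies in $(0,1)$, and in particular $g$ is $C^1$ on the whole interval $[0,t_0]$.

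Next I would show that $-g'$ is nondecreasing on $[0,1)$. Indeed $-g'(t)=t^{q-1}\cdot(1-t^q)^{(1-q)/q}$ is a product of two nonnegative functions, the first nondecreasing because $q-1>0$, the second nondecreasing because the exponent $(1-q)/q$ is negative while $1-t^q$ is decreasing. Consequently $-g'$ attains its maximum over $[0,t_0]$ at the right endpoint, and I would take
\[
C_1:=-g'(t_0)=\frac{t_0^{q-1}}{(1-t_0^q)^{(q-1)/q}} .
\]
Substituting the defining relation $1-t_0^q=C^q$ (equivalently $t_0^q=1-C^q$) turns this into $C_1=(1-C^q)^{(q-1)/q}C^{1-q}$, which is finite since $t_0<1$ and positive since $t_0>0$; it is an explicit quantity depending only on $q$ and $C$.

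Finally, for any $t$ with $0<t<t_0$, the Mean Value Theorem applied to $g$ on $[t,t_0]$ produces $\xi\in(t,t_0)$ with $g(t_0)-g(t)=g'(\xi)(t_0-t)$, so
\[
g(t)=C+\bigl(-g'(\xi)\bigr)(t_0-t)\le C+\bigl(-g'(t_0)\bigr)(t_0-t)=C+C_1(t_0-t),
\]
where the inequality uses $\xi<t_0$ together with the monotonicity of $-g'$ and $t_0-t>0$. This is the asserted bound. I do not expect a real obstacle: the only delicate point is that $|g'|\to\infty$ as $t\to 1$, so the estimate genuinely needs the restriction $t<t_0<1$, and the constant $C_1$ depends on $C$ (through $t_0$) in addition to $q$ — which is harmless, since in the subsequent verification of property $(\ddagger)$ the constant $C$ is prescribed in advance.
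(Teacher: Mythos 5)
Your proof is correct and is exactly the argument the paper intends: the paper offers no details beyond the remark that the lemma "follows from the Mean Value Theorem," and you have supplied those details properly, including the monotonicity of $-g'$ needed to bound the MVT derivative by its value at $t_0$. Your observation that $C_1=(1-C^q)^{(q-1)/q}C^{1-q}$ depends on $C$ as well as $q$ is accurate and harmless for the subsequent verification of $(\ddagger)$, where $C$ is fixed in advance.
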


If we consider $\ell_p^*=\ell_q$ with the standard basis and basis
projections, then we have that for $x^*\in \ell_q$, $x^*\ne 0$,
$$\|x^*\|=(\|P_j
x^*\|^q+\|x^*-P_jx^*\|^q)^{1/q}.$$ Let $\lambda=\|P_j x^*\|/\|x^*\|$ and 
rewriting we have that $$\|x^*-P_jx^*\|=\|x^*\|(1-\lambda^q)^{1/q}.$$
If $C$ is given and $\lambda \le t_0$ where $t_0$ and $C_1$ are as in the
lemma, 
$$\|x^*-P_jx^*\|\le C\|x^*\|+C_1\|x^*\|(t_0-\lambda)\le
C\|x^*\|+C_1(\|x^*\|-\|P_j x^*\|).$$
If $\lambda>t_0$, then $(1-\lambda^q)^{1/q}<C$ and the first term suffices.
Thus we see that

\begin{corollary} $\ell_p$, $1<p<\infty$, satisfies the daggers.
\end{corollary}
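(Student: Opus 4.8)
The plan is to verify $(\dagger)$ and $(\ddagger)$ separately, with $X=\ell_p$ and $X^*=\ell_q$, where $1/p+1/q=1$. For $(\dagger)$, I would take $X^*_m=\mathbb{R}e_m$, the $m$-th coordinate line of $\ell_q$ in the standard unit vector basis. Since that basis is monotone (the partial-sum projections $P_j$ satisfy $\|P_jx^*\|_q=(\sum_{m\le j}|x^*(m)|^q)^{1/q}\le\|x^*\|_q$) and each $P_j$ is the adjoint of the corresponding coordinate projection on $\ell_p$ and hence weak${}^*$-continuous, $(X^*_m)$ is a monotone weak${}^*$-FDD of $X^*$.

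For $(\ddagger)$, fix $C\in(0,1)$ and $x^*\in\ell_q\setminus\{0\}$ (the case $x^*=0$ is trivial). Because $P_jx^*$ and $(I-P_j)x^*$ have disjoint supports, $\|x^*\|^q=\|P_jx^*\|^q+\|(I-P_j)x^*\|^q$. Setting $\lambda=\|P_jx^*\|/\|x^*\|\in[0,1]$ and dividing through by $\|x^*\|$, the desired inequality $\|(I-P_j)x^*\|\le C\|x^*\|+C'(\|x^*\|-\|P_jx^*\|)$ becomes exactly $g(\lambda)\le C+C'(1-\lambda)$ for the function $g(t)=(1-t^q)^{1/q}$ of Lemma \ref{meanvalue}.

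I would then apply that lemma: let $t_0\in(0,1)$ be the point with $g(t_0)=C$ and let $C_1=C_1(q)$ be the constant it supplies, so $g(t)\le C+C_1(t_0-t)$ for $0<t<t_0$. If $\lambda\le t_0$, then since $t_0\le 1$ we get $g(\lambda)\le C+C_1(t_0-\lambda)\le C+C_1(1-\lambda)$; if $\lambda>t_0$, then $g$ is decreasing on $[0,1]$, so $g(\lambda)\le g(t_0)=C\le C+C_1(1-\lambda)$. Either way $g(\lambda)\le C+C_1(1-\lambda)$, so $(\ddagger)$ holds with $C'=C_1$. There is no real obstacle here; the only point to keep in mind is that the constant $C'=C_1$ must be, and is, independent of both $j$ and $x^*$, since Lemma \ref{meanvalue} produces a constant depending only on $q$ and the reduction to $g(\lambda)\le C+C'(1-\lambda)$ is homogeneous in $x^*$.
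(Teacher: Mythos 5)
Your proposal is correct and follows essentially the same route as the paper: identifying $\ell_p^*=\ell_q$ with its standard (monotone, weak${}^*$-continuous) basis projections, normalizing by $\|x^*\|$ to reduce $(\ddagger)$ to the inequality $g(\lambda)\le C+C'(1-\lambda)$, and then invoking Lemma \ref{meanvalue} with the same two-case split at $t_0$. The only cosmetic difference is that you pass from $C_1(t_0-\lambda)$ to $C_1(1-\lambda)$ explicitly, while the paper keeps the bound in the form $C_1(\|x^*\|-\|P_jx^*\|)$; these are the same estimate.
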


The next result is the initial case of the main theorem of this section.
 
\begin{theorem}\label{omomX} Suppose that $X$ is a Banach space
satisfying the daggers. Then $C(\omega^\omega)$
is not a quotient of  $C(\omega \times \beta {\mathbb N}, X).$
\end{theorem}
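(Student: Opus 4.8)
The plan is to argue by contradiction, mirroring the structure of the proof of Theorem~\ref{com}, but working entirely on the dual side since we only have a quotient map. Suppose $Q\colon C(\omega\times\beta\mathbb N,X)\sai C(\omega^\omega)$ is a quotient operator; then $Q^*$ is an isomorphic embedding of $C(\omega^\omega)^*$ into $C(\omega\times\beta\mathbb N,X)^*$, bounded below by some $\epsilon>0$. As in Theorem~\ref{com}, replace $C(\omega\times\beta\mathbb N,X)$ by the isomorphic space $C_0(\omega\times\beta\mathbb N,X)$, the $c_0$-sum of countably many copies of $C(\beta\mathbb N,X)$. First I would invoke Lemma~\ref{compress}: for a suitable $n$ (depending on $\|Q\|$, $\epsilon$, and the constant $C'$ coming from $(\ddagger)$), there is a subfamily $\{\mu_\beta:\beta\le\omega^n\}$ of $\{Q^*\delta_\gamma:\gamma\le\omega^N\}$ that is order-to-weak${}^*$ homeomorphic to $[1,\omega^n]$ and along which the norms $\|\mu_\beta\|$ vary by less than a prescribed small amount. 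Passing to a tail as in Theorem~\ref{com}, we may assume all the $\mu_\beta$ are supported on $[1,K]\times\beta\mathbb N\cong\beta\mathbb N$, so we reduce to a quotient map $Q$ from $C(\beta\mathbb N,X)$ onto $C(\omega^n)$ with $\mu_\beta=Q^*\delta_\beta$ nearly constant in norm.

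The core of the argument replaces the ``test functions $g_\rho$'' device of Theorem~\ref{com} by a purely dual-side FDD analysis. Each $\mu_\beta$ is an $X^*$-valued measure on $\beta\mathbb N$; using property $(\dagger)$, apply the FDD projections $(P_j)$ pointwise (or rather coordinatewise on the measure) to split $\mu_\beta$ into a ``head'' $P_j\mu_\beta$ and a ``tail'' $(I-P_j)\mu_\beta$. Since $Q^*$ is bounded below, for each successor node $\beta$ in $[1,\omega^n]$ the difference $\mu_\beta-\mu_{\beta'}$ (for $\beta'$ the predecessor or a suitable neighbor) has norm $\ge\epsilon$, and these differences are weak${}^*$-null along increasing sequences because $\delta_\beta-\delta_{\beta'}\to 0$ weak${}^*$ in $C(\omega^n)^*$ only when... — more precisely, one uses that along a branch the measures $\mu_{\omega^{n-1}k}$ form a weak${}^*$-null difference sequence whose norms are bounded below. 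Property $(\ddagger)$ is then used to propagate a genuine ``mass'' down an $n$-level tree: at each level, because a weak${}^*$-null, norm-bounded-below difference sequence cannot live in a fixed finite-dimensional head $P_j(X^*)$, it must put nontrivial tail mass past $P_j$ for all $j$, and $(\ddagger)$ forces $\|(I-P_j)\mu\|$ to stay bounded away from $0$ while $\|P_j\mu\|$ absorbs a definite share of the norm. Iterating this selection $n$ times along the branches of $[1,\omega^n]$ — exactly the $n$-step induction in Theorem~\ref{com}, but with ``disjoint clopen sets $G^j$'' replaced by ``disjoint blocks of FDD coordinates'' — produces a single measure $\mu_{\omega^{n-1}(k_1-1)+\dots+k_n}$ whose norm exceeds roughly $n/2$, contradicting the near-constancy of the norms established via Lemma~\ref{compress} (for $n>2\|Q\|$, say). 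Hence no such quotient map exists.

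The technical heart, and the step I expect to be the main obstacle, is making precise the ``splitting into disjoint FDD blocks along a tree'' in the vector-measure setting: one needs an analog of Lemma~\ref{disjoint_open} in which the role of the base space $\beta\mathbb N$ is played by the index set of the weak${}^*$-FDD $(X^*_m)$, so that a norm-bounded-below, pointwise-weak${}^*$-null sequence of measures can be ``diagonalized'' to be approximately supported on pairwise disjoint finite sets of FDD coordinates, with the bound $1-\epsilon$ on the relevant pairing preserved. Property $(\ddagger)$ is precisely what guarantees that cutting off the head $P_j\mu$ does not destroy the tail mass faster than linearly in the norm lost, which is what allows the errors to telescope over $n$ levels rather than blowing up. Once that disjointification lemma is in hand, the combinatorics of the $n$-level induction and the final norm estimate $\|\mu\|>n/2$ go through verbatim as in Theorem~\ref{com}, and the monotonicity in $(\dagger)$ keeps all the constants under control.
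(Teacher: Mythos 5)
Your setup (dualizing the quotient map, invoking Lemma \ref{compress}, and cutting the measures down to $[1,K]\times\beta\mathbb N\cong\beta\mathbb N$) matches the paper, but the engine you propose --- an $n$-level tree induction that accumulates mass in pairwise disjoint FDD blocks until a single measure has norm exceeding $n/2$ --- has two genuine gaps, and it is not how the paper argues. First, norms over disjoint blocks of a weak${}^*$-FDD are not additive: if a functional has components of norm at least $1/2$ in $n$ disjoint FDD intervals, neither $(\dagger)$ nor $(\ddagger)$ forces its norm to be at least $n/2$ (already for $X=\ell_p$ with $p$ near $1$ one gets only about $n^{1/q}/2$ in $X^*=\ell_q$, and for a general space satisfying the daggers the inequality $(\ddagger)$ yields essentially one increment beyond $\delta$, not $n$ of them). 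The additivity you are importing from Theorem \ref{com} is a property of total variation over disjoint clopen subsets of the underlying compact space; it simply is not available for FDD coordinates, so the intended analog of Lemma \ref{disjoint_open} cannot deliver the final estimate. Second, the step ``a weak${}^*$-null, norm-bounded-below difference sequence cannot live in a fixed head $P_j(X^*)$'' is false: although $F=[X^*_m:m\le j]$ is finite dimensional, the corresponding measures form a space isomorphic to $(\ell_\infty^*)^{\dim F}$, which contains many weak${}^*$-null normalized sequences. What cannot happen is that such a sequence be equivalent to the unit vector basis of $\ell_1$, because $C(\beta\mathbb N,F)$ is a Grothendieck space and weak${}^*$-null sequences in its dual are weakly null. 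That $\ell_1$-versus-Grothendieck tension, which your proposal never invokes, is the actual punchline.

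For comparison, the paper's proof needs only the case $k=1$ of Lemma \ref{compress}: a single weak${}^*$-convergent sequence $(\mu_n)\to\mu$ of nearly constant norm, which is $K$-equivalent to the $\ell_1$ basis because $T^*$ is a weak${}^*$-embedding. Choosing a finite clopen partition nearly norming $\mu$ and then $N$ so that $P_N$ nearly norms $\mu$ on each piece, one uses $(\ddagger)$ --- and this is its only role --- to get the uniform tail bound $\|(I-P_N)\mu_n\|\le (2K)^{-1}$. Hence the heads $(P_N\mu_n)$ remain equivalent to the $\ell_1$ basis, yet they converge weakly by the Grothendieck property; this is the contradiction. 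If you replace your norm-accumulation endgame by this argument, the tree and the FDD disjointification become unnecessary.
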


\begin{proof} First we can replace $C(\omega \times \beta {\mathbb N}, X)$
by the isomorphic space $C_0(\omega \times \beta {\mathbb N}, X).$
Suppose  that there exists
a bounded linear operator $T$ from $C_0(\omega \times \beta {\mathbb N}, X)$
onto $C(\omega^\omega)$.  We will show that this  leads to a contradiction. We
may assume that $\|T\|=1$.  Because $T^*$ is a weak${}^*$-isomorphism
from $C(\omega^\omega)^*$ into $C_0(\omega \times \beta\mathbb N, X)^*$,
there is a constant $K$ such that for every $n \in \mathbb N$,
$\{\delta_\alpha:\alpha \le \omega^n\}$ is mapped to $\{x^*_\alpha:\alpha \le
\omega^n\}\subset C_0(\omega \times \beta\mathbb N, X)^*$
and $\{x^*_\alpha:\alpha \le
\omega^n\}$ is $K$-equivalent to the the usual unit vector basis of
$\ell_1$, i.e., $$\|\sum_\alpha c_\alpha x^*_\alpha\|\ge \sum_\alpha
|c_\alpha|/K,$$
for all sequences of scalars $(c_\alpha)$.

Let $C=(8K)^{-1}$ in $(\ddagger)$, and choose $\rho,$
$1/2>\rho>0$, such that $$\left
((1-\rho)^{-1}(1-\frac{(1-\rho)^3}{1+\rho}\right){C'}<(4K)^{-1}.$$

By Lemma \ref{compress} for $k=1$ and $n$ sufficiently large,
we can find $(x^*_{\alpha(\gamma)})_{\gamma \le \omega}$
with
$$|\|x^*_{\alpha(\gamma)}\|
-\|x^*_{\alpha(\gamma')}\| |<\rho/K,$$
for all $\gamma,\gamma'\le \omega$. Moreover, as in the proof of Theorem
\ref{com}, we may assume that the measures are all supported in
$[1,K]\times \beta \mathbb N$ and thus reduce to measures supported on
$\beta \mathbb N.$ By
our identification of $C(\beta\mathbb N,X)^*$ with a space of
$X^*$-valued measures and switching
to a more suggestive notation we have $(\mu_n)$, a weak${}^*$ converging
sequence of $X^*$-valued measures with limit $\mu$, with
$$\|\mu\|(1+\rho)> \|\mu_n\|>\|\mu\|(1-\rho),$$
for all $n$. 

Choose a finite partition $\{B_j\}$
of $\beta \mathbb N$ into clopen sets
such that $$\sum_j \|\mu(B_j)\|_{X^*} >\|\mu\|(1-\rho).$$
As in $(\ddagger)$ let $P_m$ denote the
FDD projection of $X^*$ onto the span of the
first $m$ subspaces. In a slight
abuse of notation we will also use $P_m$ for the operator on the
$X^*$-valued measures defined by
$$(P_m \mu)(A)=P_m(\mu(A)),$$
for all measurable $A$. Choose $N$
such that $$\|P_N(\mu(B_j))\|>(1-\rho)\|\mu(B_j)\|,$$
for all $j$. By passing
to a subsequence we may assume that $$\|P_N(\mu_n(B_j))\|>(1-\rho)\|\mu(B_j)\|,$$
for all $j$ and $n$. Hence
\begin{align*}
\|P_N \mu_n\| \ge \sum_j \|P_N\mu_n(B_j)\|&\ge (1-\rho)\sum_j
\|\mu(B_j)\| \ \\ &\ge (1-\rho)^2 \|\mu\| \ge
\frac{(1-\rho)^2}{1+\rho}\|\mu_n\|. \end{align*}
Fix $n$ and choose a partition $\{A_k\}$ that refines $\{B_j\}$, such that
\begin{itemize}
\item $\sum_k \|\mu_n(A_k)\|\ge (1-\rho)\|\mu_n\|,$
\item $\sum_k \|P_N\mu_n(A_k)\|\ge (1-\rho)\|P_N\mu_n\|,$
\item $\sum_k
\|(I-P_N)\mu_n(A_k)\|\ge (1-\rho)\|(I-P_N)\mu_n\|.$
\end{itemize}
For each $k$
let $\lambda_k$ satisfy
$$\lambda_k \|\mu_n(A_k)\|=\|P_N\mu_n(A_k)\|.$$
Then
$$\sum_k \lambda_k \|\mu_n(A_k)\|\ge \frac{(1-\rho)^3}{1+\rho}\|\mu_n\|\ge
\frac{(1-\rho)^3}{1+\rho}\sum_k \|\mu_n(A_k)\|.$$
Equivalently,
$$(1-\frac{(1-\rho)^3}{1+\rho})\sum_k \|\mu_n(A_k)\| \ge \sum_k (1-\lambda_k)
\|\mu_n(A_k)\|.$$

We need to estimate $\|(I-P_N)\mu_n\|$.
%\begin{align*}\sum_k
%\|(I-P_N)\mu_n(A_k)\|&=\sum_k (\|\mu_n(A_k)\|^q-\|P_N\mu_n(A_k)\|^q)^{1/q} \\&=
%\sum_k (1-\lambda_k^q)^{1/q} \|\mu_n(A_k)\|.\end{align*}
Let
%$$M=\{k:
%(1-\lambda_k^q)^{1/q} \le C\}.$$
$$M=\{k:\|(I-P_k)\mu_n(A_k)\|\le C\|\mu_n(A_k)\|\}.$$
Then by the choice of $C$,  $\sum_k
\|(I-P_N)\mu_n(A_k)\|$ is less than or equal to
\begin{align*}
&\sum_{k\in M} C \|\mu_n(A_k)\|+\sum_{k\notin M}
(C+C'(1-\lambda_k))\|\mu_n(A_k)\| \\
&\le \sum_k C
\|\mu_n(A_k)\|+C'\sum_k(1-\lambda_k)\|\mu_n(A_k)\|\\
&\le C
\|\mu_n\|+C'(1-\frac{(1-\rho)^3}{1+\rho})\|\mu_n\|.\end{align*}
Because $\|T\|=1,$ $\|\mu_n\| \le 1.$
Therefore $\|(I-P_N)\mu_n\|$ is less than or equal to
$$(1-\rho)^{-1}C+(1-\rho)^{-1}C'(1-\frac{(1-\rho)^3}{1+\rho})
\le (2K)^{-1}.$$

Notice that $(P_N\mu_n)$ converges to $P_N \mu$ in the weak${}^*$-topology.
Because $C(\beta \mathbb N)$ is a Grothendieck space
so is $C(\beta \mathbb
N,[X_m:m\le N])$. Thus $(P_N\mu_n)$ converges to $P_N \mu$ in the weak
topology.  Because $(\mu_n)$ is $K$-equivalent to the usual unit vector basis of
$\ell_1$, the estimate on $\|(I-P_N)\mu_n\|$ implies that
$(P_N\mu_n)$ is also
equivalent to the usual unit vector basis of
$\ell_1$. This is a contradiction because the unit vector basis of
$\ell_1$ has no weak Cauchy subsequence.
\end{proof}

%\begin{remark}
%The role of $X=l_{p}$ in the proof is minimal. What is required is that
%$X^*$ has an FDD and that there is an estimate on the tail
%projection. Thus if for a constant $C>0$ there is a constant $C_1$
%such that $$\|(I-P_k)x^*\| \le C \|x^*\|+C_1(\|x^*\|-\|P_k x*\|),$$ for all
%$x* \in X^*$
%and $k \in \mathbb N$, then the proof will carry through.

%\end{remark}

\begin{remark}The proof of the theorem shows that $C(\omega^n)$ is not a quotient  of
$C(\omega \times \beta \mathbb N, X)$ uniformly in $n$.
Because $C(\omega^\omega)$ is isomorphic to $(\sum_n C(\omega^n))_{c_0}$,
this is equivalent to $C(\omega^\omega)$ not
being a quotient.
It is conceivable that the conclusion of Theorem \ref{omomX}
could be proved under  the hypothesis that $X$ does not have $C(\omega^n)$
as a quotient uniformly in $n$.
\end{remark}

The following theorem 
generalizes Theorem \ref{omomX} to higher ordinals.

\begin{theorem}\label{omalX}  Suppose that $X$ is a Banach space such that
$C(\omega^\omega)$ is not isomorphic to a quotient of $C(\omega,X)$. Let
$\alpha \ge 1.$ Then $C(\omega^{\omega^\alpha})$ is isomorphic to a
quotient of $C(\beta \mathbb N \times
\xi,X)$ if and only if $\xi \ge \omega^{\omega^\alpha}.$
\end{theorem}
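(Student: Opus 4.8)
The plan is to mimic the way Theorem \ref{excom} was deduced from Theorem \ref{alphacom}, replacing "complemented copy" by "quotient" everywhere and arguing entirely in the dual. The converse direction is the easy one: if $\xi \ge \omega^{\omega^\alpha}$ then $C(\omega^{\omega^\alpha})$ is (isometrically, by Remark \ref{pro}) a $1$-complemented subspace of $C(\xi)$, hence a quotient of $C(\xi)$, hence a quotient of $C(\beta\mathbb N \times \xi, X)$ via the obvious evaluation/averaging map; in fact since $C(\xi)\sim C(\omega^{\omega^\alpha})$ already when $\omega^{\omega^\alpha}\le \xi<\omega^{\omega^{\alpha+1}}$ by Proposition \ref{BPiso}.2, and the scale of ordinals $\omega^{\omega^\xi}$ is cofinal, it suffices to check $\xi = \omega^{\omega^\alpha}$. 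So the content is the forward implication: if $C(\omega^{\omega^\alpha})$ is a quotient of $C(\beta\mathbb N\times\xi,X)$ then $\xi\ge\omega^{\omega^\alpha}$, i.e.\ $C(\omega^{\omega^\alpha})$ is \emph{not} a quotient of $C(\beta\mathbb N\times\eta,X)$ for any $\eta<\omega^{\omega^\alpha}$.

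First I would reduce to the case $\eta=\omega^{\omega^\gamma}$ for some $\gamma<\alpha$ (any $\eta<\omega^{\omega^\alpha}$ satisfies $\eta<\omega^{\omega^\gamma}$ for some $\gamma<\alpha$, and $C(\eta)$ is then a complemented subspace of $C(\omega^{\omega^\gamma})$, so a quotient of $C(\beta\mathbb N\times\eta,X)$ would give one of $C(\beta\mathbb N\times\omega^{\omega^\gamma},X)$). Then argue by induction on $\alpha$. The base case $\alpha=1$ is exactly Theorem \ref{omomX}. For the inductive step, suppose $C(\omega^{\omega^\alpha})$ is a quotient of $C(\beta\mathbb N\times\omega^{\omega^\gamma},X)$ with $\gamma<\alpha$, and take $\gamma$ minimal with this property; we derive a contradiction. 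Replace $C(\beta\mathbb N\times\omega^{\omega^\gamma},X)$ by $C_0(\omega^{\omega^\gamma}\times\beta\mathbb N,X)$ (Proposition \ref{BPiso}.1) and let $T$ be a quotient map onto $C(\omega^{\omega^\alpha})$, so $T^*$ is a weak${}^*$-isomorphism of $C(\omega^{\omega^\alpha})^*$ into the dual. Pick $\alpha_k\uparrow\omega^\alpha$ (namely $\omega^{\alpha'}k$ if $\alpha=\alpha'+1$, or $\omega^{\xi_k}$ with $\xi_k\uparrow\alpha$ if $\alpha$ is a limit) and $\beta_k\uparrow\omega^\gamma$ similarly; let $P_m$ be the canonical projection of $C_0(\omega^{\omega^\gamma}\times\beta\mathbb N,X)$ onto $C(\omega^{\beta_m}\times\beta\mathbb N,X)$. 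Exactly as in the proof of Theorem \ref{alphacom}: by Lemma \ref{compress} applied with a suitable $k$, inside $\{T^*\delta_\beta:\beta\le\omega^{\alpha_{k'}}\}$ there is a subfamily $\{\mu_\rho:\rho\le\omega^{\alpha_k}\}$ with $\rho\mapsto\mu_\rho$ an order-to-weak${}^*$ homeomorphism and norms nearly constant; choosing $m$ so that $\|(I-P_m^*)\mu_{\omega^{\alpha_k}}\|$ is small and passing to a neighborhood of $\omega^{\alpha_k}$, we may assume $\|(I-P_m^*)\mu_\rho\|$ is small for all $\rho\le\omega^{\alpha_k}$. Since the nearly-isometric copy of the $\ell_1$-basis $\{\mu_\rho\}$ survives the perturbation, $\{P_m^*\mu_\rho:\rho\le\omega^{\alpha_k}\}$ is still equivalent to the $\ell_1$-basis, so (taking $\alpha_k\ge\omega^\gamma$, which is possible since $\gamma<\alpha$) it witnesses, via Remark \ref{pro} which gives a $1$-complemented $C(\omega^{\alpha_k})\sim C(\omega^{\omega^\gamma})$ with this dual basis, that $C(\omega^{\omega^\gamma})$ is a quotient of $C(\beta\mathbb N\times\omega^{\beta_m},X)$. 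But $\beta_m<\omega^\gamma$, so this contradicts the inductive hypothesis (in the form that $C(\omega^{\omega^\gamma})$ is not a quotient of $C(\beta\mathbb N\times\eta,X)$ for $\eta<\omega^{\omega^\gamma}$).

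The main obstacle is the bookkeeping in the dual perturbation argument: one must be careful that passing from $\{\mu_\rho\}$ to $\{P_m^*\mu_\rho\}$ genuinely preserves both the $\ell_1$-structure (quantitatively, using the $\ell_1$-lower bound from $T^*$ being a weak${}^*$-isomorphism together with the smallness of $\|(I-P_m^*)\mu_\rho\|$, uniformly in $\rho\le\omega^{\alpha_k}$) \emph{and} the order-to-weak${}^*$ continuity needed to realize $\{P_m^*\mu_\rho\}$ as the natural dual basis of a complemented $C(\omega^{\alpha_k})$ sitting inside the target — equivalently, that the restriction $P_m\circ T$ followed by the natural isomorphism is still a quotient map onto a copy of $C(\omega^{\alpha_k})$. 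Here one should also note, as in Theorem \ref{omomX}, that $C(\beta\mathbb N,[X^*_m:m\le N])$-type Grothendieck arguments are not needed for the step itself — they are absorbed into the base case — so the inductive step is a purely "soft" deduction once the estimates from Theorem \ref{alphacom}'s proof are transcribed to the quotient setting. One should double-check the four cases (successor/limit for $\alpha$ and for $\gamma$) in the choice of the sequences $\alpha_k,\beta_k$, but these are routine given the identity $\max_\sigma$ manipulations already used in Lemma \ref{ordprod}.
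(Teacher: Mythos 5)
Your proposal is correct and follows essentially the same route as the paper: an induction on $\alpha$ driven by Lemma \ref{compress}, truncation by the canonical projections in the $\ell_1$-sum dual, and the observation that a small uniform perturbation preserves both the $\ell_1$-lower bound and the weak${}^*$-continuity needed to recognize a quotient, exactly as in the proof of Theorem \ref{alphacom}. The only notable differences are cosmetic --- the paper first absorbs the $\beta\mathbb N$ factor into $X$ and splits the induction into limit and successor cases, extracting a family indexed by $\omega^{\omega^\gamma}$ with $\alpha=\gamma+1$ rather than by $\omega^{\alpha_k}$ --- and your appeal to Theorem \ref{omomX} for the base case uses the daggers rather than the stated hypothesis on $C(\omega,X)$, but the paper makes the identical leap when it asserts that $C(\omega^\omega)$ is not a quotient of $C(\beta\mathbb N\times\omega,X)$.
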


\begin{proof} It is easy to see that $C(\gamma)$ is
isomorphic to a complemented subspace of $C(\beta \mathbb N \times
\gamma,X)$ for all $\gamma \ge
1.$ Thus the sufficiency is clear.

On the other hand,  by Proposition \ref{BPiso}  $$C(\beta \mathbb N \times
\omega^n,X)=C(\omega^n,C(\beta \mathbb N,X)) \sim C(\omega,C(\beta \mathbb N,X))=C(\beta \mathbb N \times\omega,X),$$
for all positive
integers $n$. Thus the necessity is true for $\alpha =1.$ Because we have that $C(\omega^\omega)$ is not isomorphic to
a quotient of $C(\omega,X)$ or of $C(\beta \mathbb N \times \omega,X)$, it
is sufficient to prove the result for $C(\xi,X)$ rather than $C(\beta
\mathbb N \times
\xi,X)$. 

Now suppose that
$\alpha > 1$ and
for all $\gamma<\alpha,$ $C(\omega^{\omega^\gamma})$ is not isomorphic to a
quotient of $C(\xi,X)$ if $\xi<\omega^{\omega^\gamma}.$ 
We will show that if $\beta<\omega^{\omega^\alpha}$ and
there is a bounded operator $T$ from $C(\beta,X)$ onto
$C(\omega^{\omega^\alpha})$ that this leads to a contradiction. Without loss of generality
we assume that $\|T\|=1.$

If $\alpha$ is a limit ordinal and $\alpha_n
\uparrow \alpha,$ then $\omega^{\omega^{\alpha_n}}>\beta$ for some $n$.
$C(\omega^{\omega^{\alpha_n}})$ is isomorphic to a quotient of $C(\omega^{\omega^\alpha})$ and by the assumption also $C(\beta,X)$ but
this contradicts the induction hypothesis.

Now suppose that $\alpha=\gamma+1,$ for some ordinal $\gamma.$ By
Proposition \ref{BPiso},
$C(\beta,X)$ is
isomorphic to $C_0(\beta,X)$. Hence we may assume that $T$ goes from
$C_0(\beta,X)$ onto $C(\omega^{\omega^\alpha})$. Also by Proposition
\ref{BPiso}
we may assume that $\beta =
\omega^{\omega^\zeta}$ for some $1 \le\zeta <\alpha$. Let $\zeta_k =
\omega^{\zeta - 1}k$ if $\zeta$ is not a limit ordinal and $\zeta_k
\uparrow \omega^\zeta$ otherwise. Let $\{x^*_{\rho}:\rho\le
\omega^{\omega^\alpha}\}$ be the corresponding images of the dirac measures
$\{\delta_{\rho}:\rho\le
\omega^{\omega^\alpha}\}$ under $T^*$.
Then $\{x^*_{\rho}:\rho\le
\omega^{\omega^\alpha}\}$ is, for some $R>1$, $R$-equivalent to the usual unit vector basis
of $\ell_1$. Employing Lemma \ref{compress} there is an $N$ sufficiently large
such that if $\{y^*_\rho:\rho \le \omega^{\omega^\gamma \dot N}\}$ is
contained in the unit ball of $C_0(\beta,X)^*$ and the mapping $\rho \rightarrow
y^*_\rho$ is $\hbox{weak}^*$ continuous, then there is a continuous
map $\phi$ from
$[1,\omega^{\omega^\gamma}]$ into $[1,\omega^{\omega^\gamma \dot N}]$ such
that $$|\|y^*_{\phi(\rho)}\|-\|y^*_{\phi(\rho')}\||<\frac{1}{4R},$$ for all $\rho,\rho'\le
\omega^{\omega^\gamma}.$ Applying this to $\{x^*_{\rho}:\rho\le
\omega^{\omega^\gamma\dot N}\}$, we get a map $\phi$ as above.
Because $C_0(\beta,X)^*$ is
$\hbox{weak}^*$-isomorphic, norm-isometric
to $(\sum C(\omega^{\zeta_k},X)^*)_{c_0^*}$, there exists $K$ such
that $$\|(I-P_K) x^*_{\phi(\omega^{\omega^\gamma}}\|<\frac{1}{4R},$$
where
$P_K$ is the $\hbox{weak}^*$-continuous projection (truncation)
from $$(\sum_{k=1}^\infty
(C(\omega^{\zeta_k},X)^*)_{c_0^*}  \ \ \hbox{onto} \ \ (\sum_{k\le K}
(C(\omega^{\zeta_k},X)^*)_{c_0^*}.$$
By passing to a neighborhood  of $\omega^{\omega^\gamma}$ we may assume that
$$\|P_K x^*_{\phi(\rho)}\|>\|P_K x^*_{\phi(\omega^{\omega^\gamma})}\| -
\frac{1}{4R} > \|x^*_{\phi(\rho)}\|-\frac{3}{4R},$$ for all $\rho \le
\omega^{\omega^\gamma}.$ Thus $C(\omega^{\omega^\gamma})^*$ is
$\hbox{weak}^*$-isomorphic to a subspace of 
$$(\sum_{k\le K}
C(\omega^{\zeta_k},X)^*)_{c_0^*},$$ and consequently
$C(\omega^{\omega^\gamma})$ is isomorphic to a quotient of
$$C(\omega^{\zeta_1}+\dots+\omega^{\zeta_K},X).$$
But $\omega^{\zeta_1}+\dots+\omega^{\zeta_K}< \omega^{\omega^\gamma}$
contradicting the inductive hypothesis. No such $\beta$ exists.
\end{proof}

The purpose of this
section is to prove Theorem \ref{exquo}.  This result now follows from
Theorem \ref{omalX} and Milutin's theorem by an argument similar to the
deduction of Theorem \ref{excom} from Theorem \ref{com}. We leave the
details to the reader.

\begin{theorem}\label{exquo} Let $K$ be an infinite compact metric space and
$0 \leq \alpha < \omega_{1}$, and let $X$ satisfy the daggers. Then
$$C(\beta {\mathbb N} \times \omega^{\omega^\alpha}, X) \sai  C(K)
\Longleftrightarrow  C(K) \sim C(\omega^{\omega^\xi}) \ \hbox{for some} \ \
0 \leq \xi \leq \alpha.$$
\end{theorem}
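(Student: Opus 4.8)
The plan is to deduce Theorem~\ref{exquo} from Theorem~\ref{omalX} and Milutin's theorem, exactly as Theorem~\ref{excom} was deduced from Theorem~\ref{alphacom}. The one preliminary point to nail down is that Theorem~\ref{omalX} is actually available here: since $X$ satisfies the daggers, Theorem~\ref{omomX} says $C(\omega^\omega)$ is not a quotient of $C(\omega\times\beta\mathbb N,X)$, hence not a quotient of its complemented subspace $C(\omega,X)$, which is precisely the hypothesis of Theorem~\ref{omalX}. So that theorem holds for every $\alpha\ge 1$, and the rest is bookkeeping with ordinals.

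For the implication $\Longleftarrow$, suppose $C(K)\sim C(\omega^{\omega^\xi})$ with $0\le\xi\le\alpha$. If $\xi\ge 1$, then $\omega^{\omega^\alpha}\ge\omega^{\omega^\xi}$, so the sufficiency half of Theorem~\ref{omalX} gives that $C(\omega^{\omega^\xi})$ is a quotient of $C(\beta\mathbb N\times\omega^{\omega^\alpha},X)$. If $\xi=0$, then $C(\omega^{\omega^\xi})=C(\omega)\sim c_0$, and $c_0\st{c}{\hr}C(\beta\mathbb N\times\omega^{\omega^\alpha},X)$ by Theorem~\ref{CF}, so in particular $C(K)$ is a quotient. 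In both cases $C(\beta\mathbb N\times\omega^{\omega^\alpha},X)\sai C(K)$.

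For $\Longrightarrow$, suppose $C(\beta\mathbb N\times\omega^{\omega^\alpha},X)\sai C(K)$. First I would show $K$ is countable. If not, Milutin's theorem gives $C(K)\sim C([0,1])$, and $C([0,1])$ has $C(\omega^{\omega^{\alpha+1}})$ as a complemented subspace (as recalled in the proof of Theorem~\ref{excom}), hence as a quotient; composing quotient maps yields $C(\beta\mathbb N\times\omega^{\omega^\alpha},X)\sai C(\omega^{\omega^{\alpha+1}})$, and the necessity half of Theorem~\ref{omalX} (with $\alpha+1$ in place of $\alpha$) forces $\omega^{\omega^\alpha}\ge\omega^{\omega^{\alpha+1}}$, which is absurd. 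So $K$ is countable; by the Mazurkiewicz and Sierpi\'nski theorem it is homeomorphic to an ordinal interval, and by Bessaga and Pe\l czy\'nski $C(K)\sim C(\omega^{\omega^\xi})$ for a unique countable $\xi$. If $\xi\ge 1$, Theorem~\ref{omalX} forces $\omega^{\omega^\alpha}\ge\omega^{\omega^\xi}$, i.e.\ $\xi\le\alpha$; and $\xi=0$ trivially satisfies $\xi\le\alpha$. Hence $0\le\xi\le\alpha$.

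I do not expect a genuine obstacle here: all the analytic content sits in Theorems~\ref{omomX} and~\ref{omalX}, and the remaining work is purely organizational. The only places needing mild care are the boundary cases: $\xi=0$, where $C(\omega^{\omega^\xi})$ degenerates to $c_0$ and one invokes Cembranos--Freniche rather than Theorem~\ref{omalX} (which is stated only for exponents $\ge 1$), and $\alpha=0$, where the statement reduces to the assertion that $c_0$ is the only separable $C(K)$ quotient of $C(\beta\mathbb N\times\omega,X)$ --- which is exactly Theorem~\ref{omomX} (ruling out $C(\omega^{\omega^\xi})$ for $\xi\ge 1$) together with the $\xi=0$ observation above.
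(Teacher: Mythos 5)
Your argument is correct and follows exactly the route the paper prescribes: it deduces Theorem~\ref{exquo} from Theorem~\ref{omalX} (whose hypothesis you rightly verify via Theorem~\ref{omomX}) together with Milutin's theorem, mirroring the deduction of Theorem~\ref{excom} from Theorem~\ref{alphacom}; the paper itself leaves these details to the reader. Your handling of the boundary cases $\xi=0$ and $\alpha=0$ is the right bookkeeping and introduces no gap.
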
  

\begin{corollary}\label{exquolp} Let $K$ be an infinite compact metric space and
$0 \leq \alpha < \omega_{1}$. Then
$$C(\beta {\mathbb N} \times \omega^{\omega^\alpha}, l_{p}) \sai  C(K)
\Longleftrightarrow  C(K) \sim C(\omega^{\omega^\xi}) \ \hbox{for some} \ \
0 \leq \xi \leq \alpha.$$

\end{corollary}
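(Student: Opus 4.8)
The plan is to deduce Corollary~\ref{exquolp} as a straightforward specialization of Theorem~\ref{exquo}. The only thing that needs to be checked is that the hypothesis of Theorem~\ref{exquo} is met when $X = l_p$, and that is exactly the content of the corollary to Lemma~\ref{meanvalue} proved above, which states that $\ell_p$, $1<p<\infty$, satisfies the daggers. So for $1<p<\infty$ the corollary is immediate: apply Theorem~\ref{exquo} with $X = l_p$.

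The remaining case is $p=1$, which is not covered by the daggers (the argument there used $1<q<\infty$ in Lemma~\ref{meanvalue}, so the FDD estimate $(\ddagger)$ as stated fails for $l_1$, whose dual is $l_\infty$ and does not even have a weak${}^*$-FDD in the required monotone sense). First I would observe that $C(\beta\mathbb N \times \omega^{\omega^\alpha}, l_1)$ is, up to isomorphism, the injective tensor product $C(\beta\mathbb N \times \omega^{\omega^\alpha}) \stackrel{\scriptscriptstyle\vee}{\otimes} l_1$, equivalently $\mathcal K(c_0, C(\beta\mathbb N \times \omega^{\omega^\alpha}))$ or a space of compact (indeed all bounded, since $c_0^* = l_1$ has the appropriate properties) operators; but the cleaner route is to use Theorem~\ref{iso}, which gives $C(\beta\mathbb N \times \omega, l_1) \sim C(\beta\mathbb N, l_1)$, and more generally the reduction $C(\beta\mathbb N \times \omega^{\omega^\alpha}, l_1) \sim C(\beta\mathbb N \times \omega^{\omega^\alpha}, l_1)$ with the $l_1$-sum structure. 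Since $l_1$ contains no copy of $c_0$, Theorem~\ref{excom} already tells us which separable $C(K)$ embed complementably; for quotients one wants the analogue. The key point for $p=1$ is that $C(\omega^\omega)$ is still not a quotient of $C(\beta\mathbb N \times \omega, l_1)$: here one can argue directly, since a quotient map onto $C(\omega^\omega)$ would, via $T^*$, produce an $\ell_1$-basis of Dirac images inside $C(\beta\mathbb N\times\omega,l_1)^* = C_0(\omega, (C(\beta\mathbb N, l_1))^*) = c_0$-type dual, i.e.\ inside an $\ell_1$-sum of spaces of $l_\infty$-valued measures; one uses the Grothendieck property of $C(\beta\mathbb N)$ and weak${}^*$-convergence of a subsequence of the $\mu_n$ to derive that the images are weakly Cauchy, contradicting $\ell_1$-equivalence, exactly as at the end of the proof of Theorem~\ref{omomX}. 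The FDD machinery was only needed to control the "tail" $\|(I-P_N)\mu_n\|$; for $l_1$ the dual is $l_\infty = C(\beta\mathbb N)$, which is itself Grothendieck, so the tails are automatically small in the relevant sense and no $(\ddagger)$-type estimate is required.

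Thus the plan is: (1) for $1<p<\infty$, cite the corollary of Lemma~\ref{meanvalue} and apply Theorem~\ref{exquo}; (2) for $p=1$, establish separately that $C(\omega^\omega)$ is not a quotient of $C(\beta\mathbb N\times\omega, l_1)$ by the Grothendieck/weak${}^*$-convergence argument (using that $l_1^* = l_\infty$ is Grothendieck to dispense with the daggers), then run the inductive ordinal argument of Theorem~\ref{omalX} verbatim to conclude that $C(\omega^{\omega^\alpha})$ is a quotient of $C(\beta\mathbb N\times\omega^{\omega^\alpha}, l_1)$ but of no smaller space; finally combine with Milutin's theorem (as in the deduction of Theorem~\ref{exquo} from Theorem~\ref{omalX}) to get the stated equivalence for all infinite compact metric $K$.

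The main obstacle I anticipate is the $p=1$ base case: verifying that the proof of Theorem~\ref{omomX} genuinely goes through for $l_1$ without the daggers. The daggers were invoked precisely to bound $\|(I-P_N)\mu_n\|$ from above by $(2K)^{-1}$, and one must check that an alternative bound is available when $X^* = l_\infty$. The cleanest fix is to note that $C(\beta\mathbb N, l_1)^*$ is a space of $l_\infty$-valued measures on $\beta\mathbb N$, that $C(\beta\mathbb N, l_1)$ is itself a Grothendieck space (being a $C(K)$-space for $K$ Grothendieck-like, or by a direct argument), so that any weak${}^*$-convergent sequence of such measures is weakly convergent, and then the $\ell_1$-basis produced by $T^*$ must have a weakly Cauchy subsequence — the desired contradiction, with no FDD truncation needed at all. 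I would spell out this Grothendieck-property claim carefully, since it is the one place where $p=1$ behaves differently from $1<p<\infty$, and then the ordinal induction is routine.
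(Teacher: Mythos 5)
For $1<p<\infty$ your argument is exactly the paper's: the corollary to Lemma \ref{meanvalue} supplies the daggers for $\ell_p$, and Theorem \ref{exquo} then gives the statement immediately. The problem is your $p=1$ case. The claim on which it rests --- that $C(\omega^\omega)$ is not a quotient of $C(\beta\mathbb N\times\omega,\ell_1)$ --- is false. Evaluation at any fixed point of $\beta\mathbb N\times[1,\omega]$ is a norm-one surjection of $C(\beta\mathbb N\times\omega,\ell_1)$ onto $\ell_1$ (the constant functions give a right inverse), and $\ell_1$ maps onto every separable Banach space; composing, every $C(\omega^{\omega^\xi})$, and indeed $C[0,1]$, is a quotient of $C(\beta\mathbb N\times\omega^{\omega^\alpha},\ell_1)$ for every $\alpha$. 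So for $p=1$ the left-hand side of the asserted equivalence holds for \emph{every} separable $C(K)$ and the ``only if'' direction fails outright; no proof can exist. The corollary has to be read with $1<p<\infty$, which is consistent with the introduction (where the quotient results are stated only for $1<p<\infty$) and with the fact that the daggers are verified only in that range. The correct move for $p=1$ is to restrict the claim, not to supply a new base case.

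Your proposed repair is also internally flawed at the point where it would have to do the work. $C(\beta\mathbb N,\ell_1)$ is not a Grothendieck space: it has $\ell_1$, hence $c_0$, as a quotient, quotients of Grothendieck spaces are Grothendieck, and $c_0$ is not (the coordinate functionals converge weak${}^*$ but not weakly to $0$). So the assertion that ``the tails are automatically small'' and that weak${}^*$-convergent sequences of these $\ell_\infty$-valued measures are weakly convergent has no support, and the contradiction at the end of the proof of Theorem \ref{omomX} cannot be reached. (A smaller inaccuracy: $\ell_\infty=\ell_1^*$ does possess a monotone weak${}^*$-FDD, namely the coordinate decomposition, so $(\dagger)$ holds for $\ell_1$; what fails is $(\ddagger)$, as the vector $1_{\mathbb N}$ shows, since $\|(I-P_j)1_{\mathbb N}\|=\|1_{\mathbb N}\|=\|P_j 1_{\mathbb N}\|=1$ forces $1\le C$.)
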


\section{The isomorphism of $C(\beta \mathbb N \times \omega, l_{p})$  and  $C(\beta \mathbb N, l_{p})$,    $1 \leq p < \infty$}
\label{Clpom}

As an immediate consequence of Theorem \ref{com},
for every $1 \leq p <
\infty$ and $\alpha\ge \omega^{\omega}$, we have
$$C(\beta
\mathbb N \times \alpha, l_{p}) \not \sim C(\beta \mathbb N, l_{p}).$$
For $\alpha$ finite $\beta \mathbb N \times \alpha$ is homeomorphic to
$\beta \mathbb N$, so the case $\omega \le \alpha <\omega^\omega$ of
Theorem \ref{iso} remains.
This result is the case $p=1$ of Theorem \ref{isok}. To prove this
theorem  we need the following lemma.

\begin{lemma}\label{c0sum} Let $X$ and $Y$ be Banach spaces and let $1\leq
p  < \infty$ and $p \leq q\le \infty$ with $(p, q) \neq (1, \infty)$.  Then
$c_{0}(\omega \times {\mathcal K}
(X, Y))$ is isomorphic to a complemented subspace of ${\mathcal K} (l_{p}(X),
l_{q}(Y))$.
\end{lemma}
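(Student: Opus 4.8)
The goal is to exhibit a complemented copy of $c_0(\omega \times \mathcal K(X,Y))$ inside $\mathcal K(\ell_p(X),\ell_q(Y))$. The natural candidate is a "block diagonal" subspace: fix a partition of $\mathbb N$ into consecutive blocks $(I_n)_{n\in\mathbb N}$, each infinite (or even each of size $1$ would not be enough, so take each $I_n$ infinite), and inside the $n$-th block place a copy of $\mathcal K(X,Y)$ acting on the coordinates in $I_n$. First I would make this precise: choose a bijection of $I_n$ with $\mathbb N$ so that $\ell_p(X)$ restricted to coordinates in $I_n$ is isometric to $\ell_p(X)$ and similarly for $\ell_q(Y)$; then for an operator $u\in\mathcal K(X,Y)$ let $\Phi_n(u)\in\mathcal K(\ell_p(X),\ell_q(Y))$ be the operator that applies $u$ coordinatewise on the block $I_n$ (via a fixed norm-one embedding, e.g. send $(x_k)_{k\in I_n}$ to $(u x_{\sigma(k)})$ placed in block $I_n$) and is $0$ off that block. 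One checks $\Phi_n(u)$ is compact because $u$ is compact and the $\ell_p\to\ell_q$ coordinate map with $p\le q$ has the right compactness/boundedness behavior on blocks — this is where the hypothesis $(p,q)\ne(1,\infty)$ enters, since we need the single-block operator $\bigoplus u$ acting $\ell_p(X)\to\ell_q(Y)$ to be compact when $u$ is, which fails exactly for $p=1,q=\infty$.

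Next I would define the candidate subspace
$$Z=\Big\{\sum_n \Phi_n(u_n): (u_n)\in c_0(\mathbb N,\mathcal K(X,Y))\Big\},$$
the sum converging in operator norm because the ranges live in disjoint blocks of $\ell_q(Y)$ and the domains in disjoint blocks of $\ell_p(X)$, so $\|\sum_n \Phi_n(u_n)\| = \sup_n \|u_n\|$ (using again $p\le q$ for the "$\le$" direction, and testing on single blocks for "$\ge$"). This identifies $Z$ isometrically (or at least isomorphically) with $c_0(\mathbb N,\mathcal K(X,Y))$, which is $c_0(\omega\times\mathcal K(X,Y))$. The point about $q=\infty$: if $q<\infty$ the norm on $c_0$-sums of block-disjoint pieces of $\ell_q(Y)$ is not literally the sup, but it is still bounded by it, and bounded below, giving an isomorphism; I would just state the elementary inequality rather than belabor it.

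The remaining task, and the only place requiring a genuine construction, is the bounded projection $P:\mathcal K(\ell_p(X),\ell_q(Y))\to Z$. I would define it by "compressing to the diagonal blocks": given $T\in\mathcal K(\ell_p(X),\ell_q(Y))$, let $Q_n$ be the coordinate projection of $\ell_p(X)$ onto block $I_n$ and $R_n$ the coordinate projection of $\ell_q(Y)$ onto block $I_n$, and set $u_n = \Psi_n(R_n T Q_n)$ where $\Psi_n$ is a fixed norm-one map from the block operator $R_n T Q_n$ back to $\mathcal K(X,Y)$ — the catch is that $R_nTQ_n$ is an operator $\ell_p(X)\to\ell_q(Y)$ supported on one block, not an element of $\mathcal K(X,Y)$, so I must instead extract from it a single $\mathcal K(X,Y)$ representative, e.g. by averaging or by picking out one coordinate-to-coordinate component $P_1 (R_n T Q_n) E_1$ where $E_1,P_1$ are the inclusion/projection of the first coordinate of the block. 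With that choice, $P(T)=\sum_n \Phi_n(u_n)$, and $\|u_n\|\le\|T\|$, $u_n\to 0$ because $T$ is compact (so its "coordinate-to-coordinate" pieces along a block go to zero — this uses compactness essentially, via the fact that for a compact operator the norms of $R_nTQ_n$ restricted to fixed small-dimensional sub-blocks tend to $0$). Finally $P|_Z=\mathrm{id}$ by construction, so $P$ is a bounded projection onto $Z$. The main obstacle, and the step I would be most careful with, is precisely this last point: verifying that the off-block-diagonal mass is irrelevant and that $u_n\to 0$, i.e. that compactness of $T$ forces the extracted sequence $(u_n)$ into $c_0$ rather than merely $\ell_\infty$; this is what distinguishes $\mathcal K$ from $\mathcal L$ and is where the argument would need the Cembranos–Freniche-style reasoning (a w.u.c./unconditional-convergence or a direct $\epsilon$-net argument on the compact image) to be invoked carefully.
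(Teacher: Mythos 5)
Your block construction breaks down at the very first step: if $I_n$ is an infinite block and $u\in\mathcal K(X,Y)$ is nonzero, the operator $\Phi_n(u)=\bigoplus_{k\in I_n}u$ is \emph{never} compact, for any $1\le p\le q\le\infty$. Pick $x_0$ with $ux_0\ne 0$; the vectors $E_kx_0$, $k\in I_n$, lie in the unit ball of $\ell_p(X)$, while their images $E_k(ux_0)$ are disjointly supported vectors of constant norm $\|ux_0\|$ in $\ell_q(Y)$, hence have pairwise distance at least $\|ux_0\|$, so the image of the ball is not totally bounded. (For a rank-one $u$ this is just the statement that the formal inclusion $\ell_p\hookrightarrow\ell_q$, $p\le q$, is bounded but not compact.) So your candidate subspace $Z$ is not contained in $\mathcal K(\ell_p(X),\ell_q(Y))$ at all, and your parenthetical dismissal of singleton blocks has it exactly backwards. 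Relatedly, the hypothesis $(p,q)\ne(1,\infty)$ has nothing to do with compactness of $\bigoplus u$ on an infinite block; its actual role is described below.

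The paper's proof uses singleton blocks, i.e.\ the genuine matrix diagonal. Write $T$ as a matrix with entries $P_iTE_j\in\mathcal K(X,Y)$. By unconditionality (\cite[Proposition 1.c.8]{LT} and the following remarks) the map sending $T$ to the diagonal operator with entries $D_j=P_jTE_j$ is a contraction (for $q=\infty$, $p>1$ one uses that sign changes are contractive), and since $q\ge p$ a diagonal operator has norm $\sup_j\|D_j\|$. The copy of $c_0(\omega\times\mathcal K(X,Y))$ is the space of diagonal operators whose entries tend to $0$ in norm: such a diagonal is compact because its finite truncations converge to it in norm, and conversely the diagonal of a compact $T$ satisfies $\|P_jTE_j\|\to0$ --- this is exactly where compactness and the exclusion of $(p,q)=(1,\infty)$ enter. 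For $q<\infty$ the tail sums $\sum_{i\ge j}\|P_iy\|^q$ tend to $0$ uniformly over the relatively compact set $T(B_{\ell_p(X)})$; for $q=\infty$ and $p>1$ the vectors $E_jx_j$ tend weakly to $0$, so $TE_jx_j\to0$ in norm; for $p=1$, $q=\infty$ the claim is false, e.g.\ $T(a_j)=(\sum_j a_j)1_{\mathbb N}$ is compact but its diagonal is the non-compact inclusion $\ell_1\to\ell_\infty$. Your instinct that the delicate point is showing the extracted sequence lies in $c_0$ rather than merely $\ell_\infty$ is correct, but that argument must be run on the single-coordinate diagonal, not on infinite blocks.
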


\begin{proof} Let $T\in{\mathcal K} (l_{p}(X),
l_{q}(Y))$. Represent $T$ as a matrix with entries in ${\mathcal K} (X,Y)$.
For the moment assume that $q<\infty.$
By \cite[Proposition
1.c.8 and following Remarks]{LT} the operator given by the diagonal of the matrix is a
bounded linear operator with norm no larger than $\|T\|$. Therefore the mapping
from ${\mathcal K} (l_{p}(X),
l_{q}(Y))$ into the diagonal operators  with respect to this representation is a
contraction.  If $p>1$ and $q=\infty$, then sign change operators are
contractive and the argument from \cite{LT} shows that the map from
${\mathcal K} (l_{p}(X),
l_{\infty}(Y))$ into
the diagonal operators is contractive. Also for compact operators that are
diagonal the map is the identity.

The norm of a diagonal operator is the supremum of the
norms of the operators on the diagonal. For the case $q=\infty$ this is
clear. If $q<\infty$, let $D_j$ be the $j$th block of
the diagonal operator $D$ and $(x_j) \in l_{p}(X)$. Then
\begin{align*}
\|(D_j x_j)\|_{l_{q}(Y)}=\left ( \sum \|D_j x_j\|_Y^q\right )^{1/q} &\leq\left ( \sum
\|D_j\|^q \|x_j\|_X^q\right )^{1/q} \\ &\le (\sup_j \|D_j\|) \|(x_j)\|_{l_p(X)},  \end{align*}    since $q
\ge p.$ Clearly
$\|D\| \ge \sup \|D_j\|.$

Let $E_j$ be the natural inclusion map from $X$ into the elements of $l_{p}(X)$
which are zero except in the $j$th coordinate and $P_j$ be the projection from
$l_{q}(Y)$ onto $Y$ given by choosing the $j$th coordinate.
For all $j$,  let
$x_j\in B_X$, such that $$\|P_j T E_j x_j\|= \|D_j
x_j\|\ge \|D_j\|/2.$$
Because we began with a compact operator, $\{T E_j x_j\}$
is relatively compact in $l_q(Y).$ If $1\le
q <\infty$, $\|P_j T E_j
x_j\|$ converges to $0$ because $T(B_{l_p(X)})$ is relatively compact and
thus $\sum \|P_jy\|_Y^q$ converges uniformly for
$y \in T(B_{l_p(X)})$. If $q=\infty$, then $p>1$ and $(E_j x_j)$ converges
weakly to $0$. Because $T$ is compact, $(TE_j x_j)$ converges in norm to $0$.
Therefore
the limit of the norms of the operators $D_j$ must be $0$. Conversely,
any sequence of operators $(T_i)$ with $T_i \in {\mathcal K} (X,Y)$ for all $i$
and $\lim \|T_i\|=0$, induces a diagonal operator in ${\mathcal K} (l_{p}(X),
l_{q}(Y))$, by $D (x_j)=(T_j x_j)$. Clearly each truncation $D^{(n)}$ of $D$,
$$D^{(n)} (x_j)=(T_1 x_1,\dots,T_n x_n,0,0,\dots),$$ is compact and $D^{(n)}$
converges to $D$ in norm.

\end{proof}

\begin{remark} The conditions on $p$ and $q$ are not necessary
for the proof that the space of diagonals of the compact operators is
the range of a contractive map.  In fact $\ell_p$ and $\ell_q$ can
be replaced by spaces with unconditional basis.  The computation of
the bound on the norm of the diagonal operator requires that the norm
on the domain dominate  the norm on the range. In addition to prove
compactness of the diagonal we used the fact that the norm of the tail
of an element in $l_{q}(Y) \cap T(B_{l_p(X)})$ goes to zero. If $p=1$
and $q=\infty$, this may fail and the diagonal of a compact operator
may not be compact. An example of this is the one dimensional operator
$T:\ell_1 \rightarrow \ell_\infty$ defined by $T  (a_j)  = (\sum a_j)
1_{\mathbb N}$. The corresponding diagonal operator is the inclusion
map, $J (a_j) = \sum a_j 1_{\{j\}}.$ \end{remark}

\begin{theorem}\label {isok} Let $X$ and $Y$ be Banach spaces and $1\leq
p < \infty$ and $p \leq q\le \infty$ with  $(p, q) \neq (1, \infty)$. Then we have
$${\mathcal K}(l_{p}(X), l_{q}(Y)) \sim C(\omega,{\mathcal K}(l_{p}(X),
l_{q}(Y))).$$
\end{theorem}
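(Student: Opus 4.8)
The plan is to realize $\mathcal K(l_p(X),l_q(Y))$ as a $c_0$-sum of spaces of compact operators, from which the Pełczyński decomposition technique applied to the target $C(\omega,\cdot)$ — which is just the $c_0$-sum of countably many copies — finishes the argument. More precisely, write $l_p(X) = (\sum_n X_n)_{l_p}$ with each $X_n$ a copy of $X$, and similarly $l_q(Y)$. Grouping the coordinates of the domain into consecutive finite blocks of lengths $1,2,3,\dots$ (or any fixed scheme whose block spaces are uniformly isomorphic to $l_p(X)$ itself) and doing the same on the range, one decomposes an operator $T \in \mathcal K(l_p(X),l_q(Y))$ into its "block-diagonal" part and its "off-block-diagonal" remainder. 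The first step is therefore to verify, exactly as in the proof of Lemma \ref{c0sum} (using that block projections on $l_p$ and $l_q$ are contractive and sign-change/permutation operators are isometries, together with the hypothesis $(p,q)\neq(1,\infty)$ so that the diagonal of a compact operator is again compact), that the block-diagonal projection is bounded and that the space of block-diagonal compact operators is isometric to $c_0(\mathbb N, \mathcal K(l_p(X),l_q(Y)))$; the norm-to-zero condition on the blocks comes from compactness of $T$ exactly as in the lemma.

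Next I would record the two complementation facts needed for Pełczyński decomposition. On one hand, by the block-diagonal projection just described,
$$c_0(\mathbb N,\mathcal K(l_p(X),l_q(Y))) \st{c}{\hr} \mathcal K(l_p(X),l_q(Y)),$$
and since $C(\omega,\mathcal K(l_p(X),l_q(Y)))$ is precisely $c_0(\mathbb N,\mathcal K(l_p(X),l_q(Y)))$ up to the trivial identification (using Proposition \ref{BPiso}.1 to drop the value at $\omega$), this says $C(\omega,\mathcal K(l_p(X),l_q(Y)))$ is complemented in $\mathcal K(l_p(X),l_q(Y))$. On the other hand, $\mathcal K(l_p(X),l_q(Y))$ is trivially complemented (indeed $1$-complemented, as a first summand) in $C(\omega,\mathcal K(l_p(X),l_q(Y))) = \mathcal K(l_p(X),l_q(Y)) \oplus_{c_0} \mathcal K(l_p(X),l_q(Y)) \oplus_{c_0}\cdots$.

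Finally I would invoke the Pełczyński decomposition method: writing $A = \mathcal K(l_p(X),l_q(Y))$ and $B = C(\omega,A)$, we have $A \st{c}{\hr} B$ and $B \st{c}{\hr} A$, and moreover $B$ is isomorphic to its own $c_0$-sum $c_0(\mathbb N,B)$ (a countable $c_0$-sum of countable $c_0$-sums is again a countable $c_0$-sum). The standard Pełczyński argument then gives $A \sim B$, which is the assertion. The only real content is the first step — the boundedness of the block-diagonal projection and the compactness of the block-diagonal part of a compact operator — and that is handled by the same computation as Lemma \ref{c0sum}, where the exclusion of the case $(p,q)=(1,\infty)$ is exactly what is needed (as the remark following that lemma shows). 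The rest is bookkeeping with $c_0$-sums and the Pełczyński technique, so I expect no further obstacle.
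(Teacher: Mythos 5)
Your proof is correct and follows essentially the same route as the paper: the paper obtains the complemented copy of $c_0(\mathbb N,\mathcal K(l_p(X),l_q(Y)))$ by applying Lemma \ref{c0sum} with $l_p(X)$ and $l_q(Y)$ themselves in the roles of $X$ and $Y$ (which is exactly your infinite-block-diagonal decomposition), and then concludes by the Pe\l czy\'nski decomposition method together with Proposition \ref{BPiso}. The one small caveat is that consecutive finite blocks of lengths $1,2,3,\dots$ would give block spaces $l_p^n(X)$, which need not be uniformly isomorphic to $l_p(X)$; the infinite-block scheme (equivalently, the substitution $X\mapsto l_p(X)$, $Y\mapsto l_q(Y)$) is the one that actually works, as your parenthetical already anticipates.
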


\begin{proof}  By Lemma \ref{c0sum}  with $X_1=l_{p}(X)$
and $Y_1=l_{q}(Y)$ in place of $X$ and $Y$,
we see that $$C_{0}(\omega,{\mathcal K}(l_{p}(X), l_{q}(Y))) =
C_0(\omega,{\mathcal K}(X_1,Y_1)) 
\st{c}{\hr}   {\mathcal K}(X_1,Y_1)={\mathcal K}(l_{p}(X), l_{q}(Y)).$$
Therefore by Pe\l czy\'nski
decomposition method \cite[page 54]{LT} and \ref{BPiso} we infer
$${\mathcal K}(l_{p}(X), l_{q}(Y)) \sim C_{0}(\omega,{\mathcal K}(l_{p}(X), l_{q}(Y)))
\sim C(\omega,{\mathcal K}(l_{p}(X), l_{q}(Y)).$$
\end{proof}

\begin{corollary}\label{iso}
$C(\beta \mathbb N,\ell_q)$ is isomorphic to 
$C(\omega \times\beta \mathbb N,\ell_q),$ for $1\le q <\infty.$
\end{corollary}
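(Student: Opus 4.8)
The plan is to read the statement as the scalar, $p=1$ instance of Theorem \ref{isok}, after translating the spaces of compact operators occurring there back into vector-valued function spaces. First I would record the standard identification
$$\mathcal K(\ell_1, Z) \cong C(\beta\mathbb N, Z)$$
(isometrically) valid for an arbitrary Banach space $Z$: an operator $T \in \mathcal K(\ell_1,Z)$ is determined by the sequence $(Te_n)$, and $T$ is compact exactly when $\{Te_n\}$ is relatively compact in $Z$ (since $B_{\ell_1}=\overline{\mathrm{conv}}\{\pm e_n\}$, one has $T(B_{\ell_1})\subseteq\overline{\mathrm{conv}}\{\pm Te_n,0\}$, which is compact by Mazur's theorem as soon as $\{Te_n\}$ is relatively compact, and conversely $\{Te_n\}\subseteq T(B_{\ell_1})$); because $\mathbb N$ is dense in the Stone--\v Cech compactification $\beta\mathbb N$, such sequences are precisely the restrictions to $\mathbb N$ of the elements of $C(\beta\mathbb N,Z)$, and the correspondence preserves the supremum norm. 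Alternatively, this is the (degenerate) case $[1,\alpha]=\{1\}$ of the identification $\mathcal K(\ell_1, C([1,\alpha],\ell_p)) \sim C(\beta\mathbb N\times[1,\alpha],\ell_p)$ quoted in the Introduction from \cite[Proposition 5.3]{DF}, using that $\ell_q$ has the approximation property. In particular, taking $Z=\ell_q$ gives $C(\beta\mathbb N,\ell_q)\cong\mathcal K(\ell_1,\ell_q)$.

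Next I would apply Theorem \ref{isok} with $p=1$ and $X=Y=\mathbb R$, so that $\ell_p(X)=\ell_1$ and $\ell_q(Y)=\ell_q$; the hypotheses $p\le q$ and $(p,q)\neq(1,\infty)$ hold for every $q$ with $1\le q<\infty$. This yields
$$\mathcal K(\ell_1,\ell_q)\sim C(\omega,\mathcal K(\ell_1,\ell_q)).$$
Translating both sides back through the identification above, the left-hand side is $C(\beta\mathbb N,\ell_q)$, and the right-hand side is $C(\omega,C(\beta\mathbb N,\ell_q))$, which equals $C(\omega\times\beta\mathbb N,\ell_q)$ by the standard interchange $C(K_1\times K_2,X)\sim C(K_1,C(K_2,X))$. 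Combining these gives $C(\beta\mathbb N,\ell_q)\sim C(\omega\times\beta\mathbb N,\ell_q)$, which is the assertion.

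There is essentially no obstacle beyond Theorem \ref{isok} itself; the corollary is simply the concrete reading of that theorem in the scalar case. The only point meriting comment is the identification $\mathcal K(\ell_1,Z)\cong C(\beta\mathbb N,Z)$ and, if one prefers to invoke \cite{DF} rather than the direct relatively-compact-range argument, the use of the approximation property of $\ell_q$ — but this is precisely the bridge between compact operators and $C(K,X)$ spaces on which the whole paper is built, so it is already at hand.
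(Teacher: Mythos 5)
Your proof is correct and follows essentially the same route as the paper: both apply Theorem \ref{isok} with $p=1$ so that $\ell_p(X)=\ell_1$ and $\ell_q(Y)=\ell_q$ (the paper takes $X=\ell_1$, $Y=\ell_q$; you take $X=Y=\mathbb R$, which is immaterial), invoke the identification $\mathcal K(\ell_1,\ell_q)\cong C(\beta\mathbb N,\ell_q)$, and finish with the interchange $C(\omega,C(\beta\mathbb N,\ell_q))\sim C(\omega\times\beta\mathbb N,\ell_q)$. Your added justification of the identification $\mathcal K(\ell_1,Z)\cong C(\beta\mathbb N,Z)$ via relatively compact ranges is a correct elaboration of a step the paper merely recalls.
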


\begin{proof} Let $p=1,$ $X=\ell_1$ and $Y=\ell_q$ in the previous result.
Recall that $\mathcal K(\ell_1,\ell_q)$ is isomorphic to $C(\beta \mathbb
N,\ell_q)$. Thus $$C(\beta \mathbb N,\ell_q) \sim C(\omega,C(\beta \mathbb
N,\ell_q)) \sim C(\omega \times\beta \mathbb N,\ell_q).$$
\end{proof}

An analysis of the proof for the special case $p=1$ shows that we can prove
a version of the Cembranos-Freniche result for the case $K=\beta
\mathbb N.$

\begin{proposition}\label{BNextCF}
Suppose that $X$ is a Banach space, $K<\infty$, and
$(P_j)$ is a sequence of projections defined on $X$ with range $X_j$ and
$\|P_j\| \le K$ for all $j$ such that $\lim_j \|P_j x\|=0$ for all $x \in
X.$ Then $(\sum_j C(\beta \mathbb N,X_j))_{c_0}$ is complemented in
$C(\beta \mathbb N,X)$.
\end{proposition}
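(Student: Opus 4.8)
The plan is to realize $(\sum_j C(\beta\mathbb N,X_j))_{c_0}$ as the range of an isometric embedding $J$ into $C(\beta\mathbb N,X)$ and then to build a bounded left inverse $P$; the composition $JP$ will be the desired projection. First I would fix a partition $\mathbb N=\bigsqcup_j N_j$ into infinitely many infinite subsets. Since $\mathbb N$ is discrete, the $N_j$ are clopen and pairwise completely separated, so their closures $\overline{N_j}$ in $\beta\mathbb N$ are pairwise disjoint clopen sets, and each $\overline{N_j}$ is a copy of $\beta N_j$, hence homeomorphic to $\beta\mathbb N$. Identifying $C(\overline{N_j},X_j)$ with $C(\beta\mathbb N,X_j)$ and viewing $X_j\subseteq X$, I would define $J$ by sending $(f_j)$ to the continuous extension to $\beta\mathbb N$ of the bounded function $h$ on $\mathbb N$ given by $h(n)=f_j(n)$ for $n\in N_j$. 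The point of using genuinely $\overline{N_j}$-varying pieces (rather than functions constant on each block) is exactly that this produces a full copy of $C(\beta\mathbb N,X_j)=\ell_\infty(X_j)$ in each coordinate.

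The first thing to verify is that $h$ extends continuously, i.e. that its range is relatively compact in $X$. Each $f_j$ has compact range and $\|f_j\|\to 0$, so $\{h(n)\}\subseteq\{0\}\cup\bigcup_j f_j(\overline{N_j})$ is totally bounded: given $\epsilon$, all but finitely many of the compact sets $f_j(\overline{N_j})$ lie in $\epsilon B_X$, and the remaining finitely many are themselves totally bounded. Thus $J((f_j))$ exists, is unique because $\mathbb N$ is dense, and restricts on each $\overline{N_j}$ to $f_j$. Taking the supremum over the dense set $\mathbb N$ gives $\|J((f_j))\|=\sup_j\|f_j\|$, so $J$ is isometric; let $Z$ be its range.

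For the projection I would set $(Pg)_j=P_j\circ(g|_{\overline{N_j}})$ for $g\in C(\beta\mathbb N,X)$, a continuous $X_j$-valued function on $\overline{N_j}\cong\beta\mathbb N$. Then $PJ=\mathrm{id}$ since $P_j$ fixes $X_j$, and the bound $\|(Pg)_j\|\le K\|g\|$ will yield $\|P\|\le K$ once membership in the $c_0$-sum is established. That membership is the one substantive point, and the part I expect to be the main obstacle: I must show $\|(Pg)_j\|=\sup_{t\in\overline{N_j}}\|P_j(g(t))\|\to 0$. Here I would use that $G:=\overline{g(\beta\mathbb N)}$ is compact, that the maps $x\mapsto\|P_jx\|$ are uniformly $K$-Lipschitz, and that they converge pointwise to $0$ on $G$; a finite-net (Arz\`ela--Ascoli type) argument then upgrades this to uniform convergence on $G$, so $\sup_{x\in G}\|P_jx\|\to 0$ and a fortiori $\|(Pg)_j\|\to 0$. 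This is precisely where the two hypotheses $\sup_j\|P_j\|=K<\infty$ and $\lim_j\|P_jx\|=0$ combine with the relative compactness of ranges that is special to the domain $\beta\mathbb N$.

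Finally, $JP\colon C(\beta\mathbb N,X)\to Z$ is bounded, satisfies $(JP)^2=J(PJ)P=JP$, and fixes every element of $Z$, so it is a projection onto $Z\cong(\sum_j C(\beta\mathbb N,X_j))_{c_0}$, which completes the proof. I would also remark that, unlike Proposition \ref{extCF}, no appeal to Josefson--Nissenzweig is required, since here the structure realizing the $c_0$-sum is produced directly from the given projections $P_j$ together with the partition of $\mathbb N$.
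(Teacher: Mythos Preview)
Your proof is correct and follows essentially the same route as the paper: the same partition of $\mathbb N$ into infinite blocks, the same realization of the $c_0$-sum as functions that look like $P_j\circ g$ on the $j$th block, and the same finite-net argument (equicontinuity of the maps $x\mapsto\|P_jx\|$ on the compact range of $g$) to get $\sup_{t}\|P_j g(t)\|\to 0$. The only cosmetic difference is that you factor the projection as $JP$ with an explicit isometric embedding $J$, whereas the paper writes the projection directly as $(Pf)(n)=P_j(f(n))$ for $n\in N_j$ and checks that this function has relatively compact range; your $JP$ is exactly their $P$.
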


\begin{proof}
Let $\{N_j\}$ be a partition of $\mathbb N$ into countably many disjoint
infinite sets. Define an operator $P$ on $C(\beta \mathbb N, X)$ by
$(Pf)(n)=P_j(f(n))$ for all $f \in C(\beta \mathbb N, X)$,
for all $n \in N_j$, $j =1,2,\dots.$ We will show that $Pf$ has norm relatively
compact range and thus extends to a continuous function on $\beta
\mathbb N.$ The bound $K$ on the norms of the projections shows that the
range of $Pf$ is bounded in $X$. To see that the range is totally bounded,
let $\epsilon > 0$ and $\{x_m:m\in M\}$ be a finite
$(\epsilon/4)K^{-1}$-net in $f(\beta \mathbb N).$ For each $m \in M$,
there is an integer $J_m$  such that for all $j \ge J_m$, $$\|P_j
x_m\|<\epsilon/4.$$ 
Let $J= \max_m J_m.$ If $j \ge J$ and $x \in f(\beta
\mathbb N)$ then 
$$\|P_j x \| \le \min_m (\|P_j x_m \| +\|P_j(x_m-x)\|)<\epsilon/2.$$ Thus the
range of $Pf$ is contained in $$\cup_{j\le J} P_j(f(\beta \mathbb N))
\cup \frac{\epsilon}{2} B_X,$$ and an $\epsilon/2$-net in the compact set
$\cup_{j\le J} P_j(f(\beta \mathbb N))$ will yield an $\epsilon$-net.

Clearly $P$ will be linear, bounded and the identity on $C(\beta N_j,X_j)$
for all $j$. Moreover the argument above shows that $$Pf\in (\sum_j C(\beta
N_j,X_j))_{c_0}.$$
\end{proof}

\begin{remark} The results in these sections allow us to point out some limitations of our
approach if one considers more general classification problems for the
spaces
$C(K,X).$ Notice that $C(\beta \mathbb N ,\ell_2 \oplus c_0)$ is isomorphic to  $$C(\beta
\mathbb N ,\ell_2) \oplus C(\beta \mathbb N ,c_0) \sim C(\omega \times
\beta \mathbb N ,\ell_2) \oplus C(\omega \times \beta \mathbb N) 
\sim C(\beta
\mathbb N ,\ell_2).$$
If we instead use $C(\omega^\omega)$, we get a different outcome.
$$C(\beta \mathbb N ,\ell_2 \oplus C(\omega^\omega)) \sim 
C(\beta
\mathbb N ,\ell_2)\oplus C(\beta \mathbb N ,C(\omega^\omega)).$$
This space is not isomorphic to $C(\beta \mathbb N ,\ell_2 )$. It is
complemented in $C(\beta \mathbb N \times \omega^\omega,\ell_2)$ but it
does not seem likely that it contains $C(\omega^\omega,\ell_2)$ as a
complemented subspace. It also seems doubtful that there is any compact
Hausdorff space $K$ such that $C(\beta \mathbb N ,\ell_2 \oplus
C(\omega^\omega))$ is isomorphic to $C(K,\ell_2).$
\end{remark}

\section{Open questions} \label{prob}
We end this paper by  stating some questions which it raises.  We do not know whether the statement of our main result (Theorem \ref{main}) remains true in the case where   $X=l_{\infty}$, that is,
\begin{problem}\label{p1} Let $K_{1}$ and $K_{2}$  be infinite  compact metric
spaces. Does it follow that
$$C(\beta {\mathbb N} \times  \beta {\mathbb N}   \times K_{1}) \sim C(\beta {\mathbb N} \times \beta {\mathbb N} \times
K_{2}) \Longrightarrow C(K_{1}) \sim C(K_{2})?$$
\end{problem}
Notice  that with $X=\mathbb R$ in Theorem \ref{main} we have (see also \cite[Theorem 5.7]{GO})
$$C(\beta {\mathbb N}   \times K_{1}) \sim C(\beta {\mathbb N} \times
K_{2}) \Longrightarrow C(K_{1}) \sim C(K_{2}),$$
for any infinite compact metric spaces $K_{1}$ and $K_{2}$.

The case $K_1=\omega$ and $K_2=\{1\}$ of Problem \ref{p1}
is the statement of Theorem \ref{iso}
when $q=\infty$, that is,

\begin{problem}\label{p2} Is it true that
$$C(\omega \times \beta {\mathbb N} \times  \beta {\mathbb N}) \sim C(\beta {\mathbb N} \times  \beta {\mathbb N})?$$
\end{problem}

This is a special case of the following question for which Theorem
\ref{iso} gives some answers:

\begin{problem} For which infinite dimensional Banach spaces $X$ is
$$C(\omega \times \beta \mathbb N, X)  \sim C(\beta \mathbb N, X)?$$
\end{problem}

Of course if $X$ is a finite dimensional space, this is false. If
$X$ is isomorphic to $C(\omega,Y)$, then $$C(\omega \times \beta \mathbb N, X) \sim
C(\beta \mathbb N,C(\omega \times \omega ,Y)) \sim C(\beta \mathbb
N,C(\omega,Y)),$$ and thus such $X$ are examples.

One way to approach Problem \ref{p2} is to study the isomorphic
classification
of the complemented subspaces of
$C(\beta {\mathbb N} \times  \beta {\mathbb N})$. Thus, it would
be interesting to solve the following intriguing problem  which is
a particular case of the well known complemented subspace problem for
$C(K)$ spaces, see for instance \cite[section 5]{R}

\begin{problem} Let $X$ be  a complemented subspace of $C(\beta {\mathbb N} \times  \beta {\mathbb N})$. Suppose that $X$ is  infinite dimensional  separable space. Is $X$ isomorphic to $c_{0}$?
\end{problem}

In particular, the other separable $C(K)$ spaces would  be eliminated if
the answer to the following is no.

\begin{problem} Is it true that
 $C(\omega^\omega) \st{c}{\hr} C(\beta {\mathbb N} \times  \beta {\mathbb N})?$
\end{problem}

Finally, observe that  Theorem \ref{com}  leads naturally to the following  problem which is in connection with the Cembranos and Freniche's theorem (Theorem \ref{CF})

\begin{problem} Suppose that $X$ is a Banach space and $K$ is an infinite compact space. Is it true that
\begin{enumerate}
\item [(1)] $C(\omega^\omega) \st{c}{\hr}  C(K, X)     \Longrightarrow     c_{0}    \st{c}{\hr} C(K) \ \hbox{or} \  c_{0} \hr X?$
\end{enumerate}

\begin{enumerate}
\item [(2)] $C(\omega^\omega) \st{c}{\hr}  C(\beta \mathbb N, X)     \Longrightarrow     c_{0}    \st{c}{\hr} X?$
\end{enumerate}
\end{problem}

If $X$ is separable, then $c_0$ is always complemented so the latter is
true by Theorem \ref{com}. As noted after the proof of that theorem
variations of this problem with either $C(\omega^\omega)$ or uniformly
complemented copies of $(\omega^n)$ could also be considered.

It is possible that the proper context for this line of investigation is
actually injective tensor products.

\begin{problem} Suppose that $X$ and $Y$ are Banach spaces, $\alpha>0$
and $\alpha_n \uparrow \omega^\alpha$ are ordinals, and
$C(\omega^{\omega^\alpha})$ isomorphic to a (complemented) subspace of
$X \stackrel{\scriptscriptstyle
\vee}{\otimes} Y$, is
\begin{itemize}
\item{} $C(\omega^{\omega^\alpha})$ isomorphic to a
(complemented) subspace of $X$ or $Y$

or

\item{} $C(\omega^{\alpha_n})$ uniformly isomorphic to (complemented)
subspace of one of $X$ and $Y$ and $c_0$ isomorphic to a subspace of the
other?
\end{itemize}
\end{problem}

%\end{enumerate}

\bibliographystyle{amsplain}

\end{document}